\newenvironment{proof}[1][Proof:]{\begin{trivlist}
\item[\hskip \labelsep {\bfseries #1}]}{\end{trivlist}}
\newtheorem{theorem}{Theorem}[section]
\newtheorem{corollary}[theorem]{Corollary}
\newtheorem{lemma}[theorem]{Lemma}
\newtheorem{proposition}[theorem]{Proposition}
\begin{document}

\title{Stability and periodicity in the modular representation theory of symmetric groups}

\author{Nate Harman}

\maketitle

\begin{abstract}
We study asymptotic properties of the modular representation theory of symmetric groups and investigate modular analogs of stabilization phenomena in characteristic zero. The main results are equivalences of categories between certain abelian subcategories of representations of $S_n$ and $S_m$ for different $n$ and $m$.  We apply these results to obtain a structural result for $FI$-modules, and to prove a result conjectured by Deligne in a recent letter to Ostrik.

\end{abstract}

\begin{section}*{Introduction}

For a partition $\lambda = (\lambda_1,\lambda_2,...,\lambda_k)$ and $n \gg |\lambda|$ let $\lambda(n) := (n-|\lambda|, \lambda_1,\lambda_2,...,\lambda_k)$ be the partition whose Young diagram is obtained from the young diagram for $\lambda$ by adding a long first row to make it have $n$ boxes total. Recently, a lot of work has been done in studying the representation theory of symmetric groups $S_n$ uniformly in $n$ by identifying various representations corresponding to $\lambda(n)$ for different values of $n$.

We'd like to identify three different (but certainly not mutually exclusive) types of results within this theory being developed:

\medskip

\noindent \emph{1. Results relating (sub)categories of representations of different symmetric groups $S_n$ to one another, and using this to understand which representation theoretic properties we should expect to stabilize, become periodic, grow polynomially, etc., as $n$ grows.}

In characteristic $0$ this is best exemplified by Deligne's construction \cite{Deligne} of categories $\underline{\text{Rep}}(S_t)$, which interpolate the categories of representations of $S_n$ in a strong sense (as rigid symmetric tensor categories).  Roughly speaking, the very existence of such a family forces the stabilization or polynomiality of a number of representation theoretic quantities.  Also in this direction is Sam and Snowden's detailed description (\cite{SS1} section 4) of their category $\text{Mod}_A$ which is closely related to the category of $FI$-modules (see \cite{SS2}). 

\medskip

\noindent \emph{2.  Results finding sequences $V_n$ of representations of symmetric groups $S_n$ in areas of math outside of representation theory which stabilize in this sense, and understanding the mechanisms this stability or polynomiality to occur.}

A huge amount of work has been done recently in this direction following the seminal paper by Church and Farb \cite{CF} introducing the notion of representation stability.  Two notable approaches to this are the theory of $FI$-modules developed by Church, Ellenberg, and Farb in \cite{CEF}, and Putman's notion of central stability from \cite{Putman}.  In particular, much of this theory has been developed over fields of arbitrary characteristic and in some cases over arbitrary noetherian rings (see \cite{CEFN}).

\medskip

\noindent \emph{3.   Results which revisit potentially difficult representation theoretic questions that become more tractable in the setting of representation stability.}

Many of the applications of $FI$-modules (e.g. \cite{CEF}) and in particular the bounds on the stability degree they give reduce the potentially computationally infeasible task of decomposing a cohomology space $H^i(X_n)$ as a representation for large $n$ into a reasonable one by reducing it to a calculation for much smaller value of $n$.  For a more representation theoretic result of this type (but for wreath products) see \cite{Harman} where we solve (and generalize) a conjecture of Marin by first answering the question in the stable setting and then passing down to the classical setting via the machinery of Deligne's categories and their generalizations. 

\medskip

In positive characteristic, Putman's central stability \cite{Putman} and the development of the theory of $FI$-modules over noetherian rings in \cite{CEFN} are pretty satisfactory results of type $2$ in the above trichotomy.  However they lack some of the nice descriptive properties of the characteristic $0$ theory coming from type $1$ results in that setting.

 The purpose of this paper is to develop the type $1$ theory in characteristic $p$.  We'd first like to mention a couple of known results in this direction:
 
 \medskip
 \begin{itemize}
\item In the 70's James gave an explicit description of decomposition multiplicities for Specht modules of the form $S^{(n-m,m)}$ into irreducible composition factors (\cite{James} Theorem 24.14).  One easy observation is that this decomposition really only depends on the last $\ell$ base $p$ digits of $n$ where $\ell$ is the number of base $p$ digits in $m$.  In particular if we fix $m$ and let $n$ grow these decompositions become periodic in $n$ with period $p^\ell$.

\item Recently, Nagpal looked at sequences of representations $V_n$ of $S_n$ coming from finitely generated $FI$-modules and showed that the cohomology groups $H^i(S_n,V_n)$ eventually become periodic in $n$ with period a power of $p$ \cite{Nagpal}.
\end{itemize}

These help motivate the central thesis of this paper: \emph{Many aspects of the modular representation theory of symmetric groups $S_n$ becomes periodic in $n$ with period a power of the characteristic.}  The paper is structured as follows:

\begin{itemize}
\item In section $1$ we fix some notation and go over some background results on a number of different topics we will be using later on. It will be largely expository.

\item Section $2$ contains the main results of the paper, a series of stability and periodicity results about subcategories of representations of symmetric groups in positive characteristic.

\item In section $3$ we give some applications of the theory we have developed. This includes results on numerical representation theoretic invariants, a structural theorem for $FI$-modules, and a conjecture of Deligne related to pre-Tannakian categories.
\end{itemize}


\medskip

\noindent \textbf{Remark:}  This line of inquiry was inspired by a recent letter from Deligne to Ostrik in which he shows the existence of a pre-Tannakian category in characteristic $p$ with an object of superexponential growth.  The construction considers the sequence of categories of representations of $S_n$ over $\mathbb{F}_p$, but avoids the need for stabilization by using an ultrafilter. Deligne conjectured that resulting category should only depend the choice of ultrafilter very mildly and there should only be one (up to equivalence) for each $p$-adic integer.  We revisit this in more detail in section \ref{Deligne}.

\end{section}

\section*{Acknowledgments}

Thanks to Alexander Kleshchev for spotting an error in an earlier version of the paper, and for sharing his many ideas. Thanks to Victor Ostrik for sharing the letter he received from Deligne which inspired this work, as well as for meeting with me to discuss this and related work. Thanks to Pierre Deligne for his helpful comments and feedback.  Thanks to, Pavel Etingof, Inna Entova-Aizenbud, and Seth Shelley-Abrahamson for many helpful conversations and comments. This material is based upon work supported by the National Science Foundation Graduate Research Fellowship under Grant No. 1122374.

\begin{section}{Notation, background, and miscellaneous facts}

This first section will be to fix some notation and introduce some of the relevant background material and machinery we will be using later on. For the most part it consists of known results for which we have left the proofs to the references, but it also includes (with proof) a few minor corollaries or reformulations of these results which we have created for the purposes of this paper.

\begin{subsection}{Some modular representations of symmetric groups}\label{rev}

For $\lambda = (\lambda_1,\lambda_2,...,\lambda_k)$ a partition of $n$ we define $M(\lambda)$ as the permutation module for the action of $S_n$ on the cosets of $S_{\lambda_1} \times S_{\lambda_2} \times... \times S_{\lambda_k}$.  Equivalently, this is the linearization of the natural action of $S_n$ on the collection of set-partitions $[n] = A_1 \cup A_2 \cup \dots \cup A_k$ with $|A_i| = \lambda_i$.  

Contained in $M(\lambda)$  is the Specht module $S^\lambda$ which is characterized by being the intersection of the kernels of all $S_n$-equivariant maps from $M(\lambda)$ to $M(\mu)$ with $\mu > \lambda$ in the dominance order.  Similarly there is the dual Specht module $S_\lambda$ which is a quotient of $M(\lambda)$ by the images of all $S_n$-maps from $M(\mu)$ with $\mu > \lambda$ in the dominance order. 

In characteristic $0$, all of these maps split, so the Specht and dual Specht modules coincide and moreover they form a complete list of isomorphism classes of irreducible representations of $S_n$.  

In characteristic $p$, the situation is somewhat different. Let $S^{\lambda \perp}$ be the submodule of the permutation module $M(\lambda)$ is spanned by the images of all $S_n$ equivariant maps from  $M(\mu)$ to $M(\lambda)$ with $\mu > \lambda$ in the dominance order, and say that a partition is $p$-regular if it does not contain $p$ parts of the same size. We have the following classification of irreducible representations of $S_n$ over an algebraically closed field of characteristic $p$.

\begin{theorem} {\textbf{(\cite{James} section 4)}}\label{modreview}

\begin{enumerate}

\item The quotient $D^\lambda := S^\lambda / (S^\lambda \cap S^{\lambda \perp})$ is either irreducible or zero.

\item $D^\lambda$ is nonzero iff $\lambda$ is $p$-regular.

\item If we let $\lambda$ run over the collection of $p$-regular partitions, the $D^\lambda$ form a complete list of the isomorphism classes of irreducible representations of $S_n$.

\end{enumerate}

\end{theorem}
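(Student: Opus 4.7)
The plan is to follow James's classical argument, whose technical heart is a single submodule theorem. First I would equip $M(\lambda)$ with the natural $S_n$-invariant symmetric bilinear form $\langle -, -\rangle$ in which the set-partition basis is orthonormal. A key preliminary observation is that $S^{\lambda \perp}$, as defined in the excerpt via images of maps from $M(\mu)$ with $\mu > \lambda$, coincides with the honest orthogonal complement of $S^\lambda$ inside $M(\lambda)$: any $S_n$-equivariant map $M(\mu) \to M(\lambda)$ has an adjoint $M(\lambda) \to M(\mu)$, and adjunction exchanges the intersection-of-kernels description of $S^\lambda$ with the sum-of-images description of $S^{\lambda \perp}$. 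This justifies the notation and lets us use the form freely.

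The main structural input is then James's submodule theorem: any $S_n$-submodule $U \subseteq M(\lambda)$ either contains $S^\lambda$ or is contained in $S^{\lambda\perp}$. I would prove this using polytabloids: if some $u \in U$ is not orthogonal to a polytabloid $e_t$, then a straightening/Garnir argument shows $U$ contains $e_t$ and hence (by $S_n$-invariance) all of $S^\lambda$; otherwise $\langle U, S^\lambda\rangle = 0$ and $U \subseteq S^{\lambda\perp}$. Granting this, part (1) is immediate: if $V \subseteq S^\lambda$ strictly contains $S^\lambda \cap S^{\lambda\perp}$, then $V \not\subseteq S^{\lambda\perp}$, so the theorem forces $V \supseteq S^\lambda$ and hence $V = S^\lambda$, showing $D^\lambda$ is simple or zero.

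For part (2), I would compute the restriction of the form to $S^\lambda$ directly. Evaluating $\langle e_t, e_t \rangle$ for a polytabloid $e_t$ yields a combinatorial product of factorials controlled by the column structure (equivalently the multiplicities of parts) of $\lambda$, and one checks that this expression is nonzero modulo $p$ precisely when no part is repeated $p$ or more times—i.e. when $\lambda$ is $p$-regular. Nonvanishing of the form on $S^\lambda$ is equivalent to $S^\lambda \not\subseteq S^{\lambda \perp}$, i.e. $D^\lambda \neq 0$. For part (3), any irreducible $L$ appears in the regular representation, hence in some $M(\mu)$; choose $\mu$ maximal in dominance order with this property. By the submodule theorem applied inside $M(\mu)$ (and iterated inside $S^\mu$), every composition factor of $S^\mu$ that is not $D^\mu$ already appears in some $M(\nu)$ with $\nu > \mu$, so maximality forces $L \cong D^\mu$. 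Distinctness of the $D^\lambda$ follows because $D^\lambda$ occurs in $M(\lambda)$ but not in any $M(\mu)$ with $\mu > \lambda$, pinning down $\lambda$ from $D^\lambda$.

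The genuine obstacle is the submodule theorem itself together with the polytabloid computation powering part (2); these are where James's original work is concentrated, and both steps require honest combinatorial arguments on tableaux rather than soft categorical manipulation. Everything else in the statement is a relatively formal consequence of these two inputs.
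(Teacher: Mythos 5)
The paper does not prove this theorem at all; it is quoted as background with a citation to James, so the only question is whether your reconstruction of James's argument is sound. Your overall architecture is the right one: the bilinear form, the identification of $S^{\lambda\perp}$ with the orthogonal complement via adjointness, the submodule theorem giving part (1), and a downward induction in the dominance order giving part (3). (Two smaller caveats: the submodule theorem is not a straightening/Garnir argument but the lemma that $\kappa_t u$ is a scalar multiple of $e_t$ for any $u\in M(\lambda)$, combined with self-adjointness of the signed column sum $\kappa_t$; and for distinctness in (3) the assertion that ``$D^\lambda$ does not occur in $M(\mu)$ for $\mu>\lambda$'' is not free --- it needs the dominance lemma on homomorphisms $S^\lambda\to M(\mu)$, since a priori one irreducible module could carry two labels. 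You also silently use that the paper's kernel-intersection characterization of $S^\lambda$ agrees with the polytabloid span, which is itself a theorem of James.)

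The genuine gap is in part (2). The diagonal value $\langle e_t,e_t\rangle$ does not detect $p$-regularity. Concretely, when $R_t\cap C_t$ is trivial one has $\langle e_t,e_t\rangle=|C_t|=\prod_j\lambda'_j!$, a product of factorials of the \emph{column lengths}, whose nonvanishing mod $p$ says that $\lambda$ has fewer than $p$ rows --- a different condition from $p$-regularity. For instance, take $\lambda=(2,1)$ and $p=2$: then $e_t=\langle\{1,2\}\cup\{3\}\rangle-\langle\{2,3\}\cup\{1\}\rangle$ and $\langle e_t,e_t\rangle=2\equiv 0$, yet $(2,1)$ is $2$-regular and $D^{(2,1)}$ is the $2$-dimensional simple module of $S_3$ in characteristic $2$. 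James's actual argument (Lemma 10.4 and Theorem 11.1 of his book) instead pairs $e_t$ against a second, carefully chosen element of $S^\lambda$ (an off-diagonal entry of the Gram matrix), and the resulting value is $\pm\prod_j z_j!$ where $z_j$ is the number of parts of $\lambda$ equal to $j$; this is the quantity that is a unit mod $p$ exactly when $\lambda$ is $p$-regular. Moreover a single nonvanishing computation only handles one direction: for $p$-singular $\lambda$ one must separately show that \emph{every} pairing $\langle e_{t_1},e_{t_2}\rangle$ is divisible by $p$, so that the form vanishes identically on $S^\lambda$ and $S^\lambda\subseteq S^{\lambda\perp}$. As written, your part (2) both computes the wrong invariant and conflates the two directions, so this step needs to be replaced by James's off-diagonal computation.
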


We could also define the irreducible representations in terms of $p$-restricted partitions (i.e. those conjugate to a $p$-regular one) by taking:

$$D_\lambda := \text{soc}(S^\lambda)$$  

\noindent for $p$-restricted $\lambda$ (see \cite{Hemmer2}). These are related to our other description of irreducibles by the formula:

$$D_\lambda = D^{\lambda'} \otimes \text{sgn}$$

\noindent which follows from a similar formula for Specht and dual Specht modules.  Partitions $\lambda(n)$ are never $p$-restricted for $n \gg 0$, so for the most part the first description is better suited for our purposes.  However at some point we will want to use Schur-Weyl duality (described in section \ref{Schursect}), for which this second description is better behaved. 

\medskip

In addition to looking at the irreducible composition factors of permutation modules $M(\lambda)$, we will also be interested in their decomposition into indecomposable factors. The following theorem (see \cite{Martin} section 4.6, for example) summarizes what happens:

\begin{theorem}\label{young}\

\begin{enumerate}
\item There is a unique indecomposable direct summand of $M(\lambda)$ containing the Specht module $S^\lambda$, it is called the Young module $Y(\lambda)$.

\item  $Y(\lambda)$ is self dual and hence can also be characterized as the unique indecomposable direct summand of $M(\lambda)$ such that the quotient map form $M(\lambda)$ to the dual Specht module $S_\lambda$ factors through projection onto $Y(\lambda)$.

\item Any other indecomposable direct summand of $M(\lambda)$ is isomorphic to $Y(\mu)$ for some $\mu > \lambda$ in the dominance order.

\end{enumerate}
\end{theorem}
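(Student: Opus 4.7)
My plan rests on Krull--Schmidt together with two classical inputs from James' theory: the submodule theorem (any $S_n$-submodule of $M(\lambda)$ either contains $S^\lambda$ or is contained in $S^{\lambda\perp}$), and the self-duality $M(\lambda)^* \cong M(\lambda)$ coming from the symmetric $S_n$-invariant bilinear form on the permutation basis, which dualizes $S^\lambda \hookrightarrow M(\lambda)$ to $M(\lambda) \twoheadrightarrow S_\lambda$. I would prove the three parts simultaneously by downward induction on the dominance order, with base case $\lambda = (n)$ where $M((n))$ is one-dimensional.

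For part (1), I would decompose $M(\lambda) = Z_1 \oplus \cdots \oplus Z_k$ into indecomposables via Krull--Schmidt and let $I \subseteq \{1, \ldots, k\}$ be minimal with $S^\lambda \subseteq \bigoplus_{i \in I} Z_i$. If $|I| \geq 2$, pick distinct $i_1, i_2 \in I$; by minimality neither $\bigoplus_{i \neq i_1} Z_i$ nor $\bigoplus_{i \neq i_2} Z_i$ contains $S^\lambda$, so each lies in $S^{\lambda\perp}$ by the submodule theorem. Their sum is all of $M(\lambda)$, forcing $M(\lambda) \subseteq S^{\lambda\perp}$, which contradicts $M(\lambda)/S^{\lambda\perp} \cong S_\lambda \neq 0$. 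Hence $|I| = 1$, and I define $Y(\lambda)$ to be this unique summand.

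For part (3), any summand $Z_j \neq Y(\lambda)$ does not contain $S^\lambda$, so $Z_j \subseteq S^{\lambda\perp}$ by the submodule theorem. The composition $\bigoplus_{\mu > \lambda} M(\mu) \twoheadrightarrow S^{\lambda\perp} \twoheadrightarrow Z_j$ is then surjective, and by the inductive hypothesis every indecomposable summand of the source is a self-dual Young module $Y(\nu)$ with $\nu > \lambda$. Since $Z_j$ is indecomposable with simple head, at least one such $Y(\nu)$ surjects onto $Z_j$; a Krull--Schmidt / head-comparison argument, combined with self-duality of $Y(\nu)$ to produce a dual embedding $Z_j^* \hookrightarrow Y(\nu)$, should then identify $Z_j \cong Y(\nu)$. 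This identification step is the main technical obstacle I anticipate.

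For part (2), parts (1) and (3) show that every $Z_j \neq Y(\lambda)$ lies in $S^{\lambda\perp}$ and therefore maps to zero in $S_\lambda$, so the quotient $M(\lambda) \twoheadrightarrow S_\lambda$ factors through the projection onto $Y(\lambda)$, with the induced $Y(\lambda) \twoheadrightarrow S_\lambda$ necessarily nonzero (else $M(\lambda) \subseteq S^{\lambda\perp}$). Dualizing via $M(\lambda) \cong M(\lambda)^*$, the inclusion $(S_\lambda)^* \cong S^\lambda \hookrightarrow Y(\lambda)^*$ displays $Y(\lambda)^*$ (as an indecomposable summand of $M(\lambda)$ under self-duality) as a summand containing $S^\lambda$, which by part (1) must be $Y(\lambda)$ itself. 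Hence $Y(\lambda)^* \cong Y(\lambda)$.
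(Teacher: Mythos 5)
Your parts (1) and (2) are essentially correct: the submodule theorem plus the directness of the decomposition does force a unique summand containing $S^\lambda$, and the factorization of $M(\lambda)\twoheadrightarrow S_\lambda$ through that summand, followed by dualizing with the canonical form, is the standard way to get self-duality. (One small point there: in part (2) you apply part (1) to the \emph{dualized} decomposition, so you need the summand containing $S^\lambda$ to be well defined up to isomorphism independently of the chosen decomposition; this follows from a short local-endomorphism-ring argument --- the composite of the two projections restricts to the identity on $S^\lambda$, hence is not nilpotent, hence is invertible --- and should be said explicitly. You are also silently identifying the orthogonal complement $S^{\lambda\perp}$, which is what the submodule theorem refers to, with the sum of the images of all maps $M(\mu)\to M(\lambda)$, $\mu>\lambda$, which is what your surjection needs; that identification is the dual of the kernel-intersection characterization of $S^\lambda$ and deserves a sentence.) The genuine gap is in part (3), exactly where you flag it, and it is not a technicality: first, ``$Z_j$ is indecomposable with simple head'' is unjustified --- indecomposable $kS_n$-modules need not have simple head, so from a surjection $\bigoplus Y(\nu)\twoheadrightarrow Z_j$ you cannot isolate a single $Y(\nu)$ mapping onto $Z_j$; second, even granted $Y(\nu)\twoheadrightarrow Z_j$ and, by dualizing, $Z_j^*\hookrightarrow Y(\nu)^*\cong Y(\nu)$, nothing forces $Z_j\cong Y(\nu)$: an indecomposable module generally has proper quotients $Q$ with $Q^*$ embedding back into it, and you have neither a dimension count nor a splitting. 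What your induction actually requires is that $Z_j$ be a direct \emph{summand} of $\bigoplus_{\mu>\lambda}M(\mu)$, not merely a quotient of it; a surjection onto a module that happens to be a summand of something else does not split, and supplying this splitting/identification is precisely the content of the published proofs.

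Note that the paper does not prove this theorem at all --- it is quoted as background from Martin, Section 4.6 --- so the fair comparison is with the standard literature arguments, any of which would close your gap: (a) the theory of $p$-permutation (trivial source) modules and vertices, where summands of permutation modules are controlled by Brauer-type correspondences (Grabmeier, Klyachko); (b) lifting idempotents to a $p$-modular system, using ordinary characters and the unitriangularity of the Kostka matrix to attach to each indecomposable summand of $M(\lambda)$ a dominance-maximal Specht constituent $S^\nu$ and then exchanging it into $M(\nu)$; or (c) passing through the Schur algebra, where the summands of $M(\lambda)$ are images of projectives under the Schur functor (this is the route implicit in the paper's later identity $\mathcal{F}(P(\lambda))=Y(\lambda)$ and in the cited section of Martin). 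Your outline for (1) and (2) is sound, but as it stands part (3) needs one of these external inputs rather than the head/duality comparison you propose.
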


\end{subsection}

\begin{subsection}{Highest weight categories and Ringel duality}

We'll now recall some facts about highest weight categories. For detailed references we refer to \cite{CPS} and the appendix of \cite{DonkinBook}, but a quick reference we will recommend the worksheet \cite{Losev} by Losev, which gives a series of exercises developing the theory.

 We'd also like to give a disclaimer before going on that the category of representations of the symmetric group $S_n$ over a field of characteristic $p \le n$ is \textbf{not} a highest weight category.

Let $\Lambda$ be a finite poset (finiteness can be relaxed, but for our purposes we won't need to), and let $\mathcal{C}$ be a $k$-linear artinian category with simple objects $L(\lambda)$ indexed by $\Lambda$.  Then a \emph{highest weight structure} on $\mathcal{C}$ is a collection of \emph{standard} objects $\Delta(\lambda)$ for $\lambda \in \Lambda$ such that:

\begin{itemize}
\item $\text{Hom}(\Delta(\lambda),\Delta(\mu)) \ne 0 \implies \lambda \le \mu.$
\item $\text{End}(\Delta(\lambda)) \cong k$ for all $\lambda \in \Lambda$
\item $\mathcal{C}$ has enough projectives, and the projective cover $P(\lambda)$ of $L(\lambda)$ surjects onto $\Delta(\lambda)$ with a kernel which admits a filtration with subquotients of the form $\Delta(\mu)$ for $\mu > \lambda$.
\end{itemize}

Such categories have a rich structure, and we will only be recalling the aspects of the theory which we will need for the purposes of this paper.

\medskip

Suppose $\Lambda' \subset \Lambda$ is a poset ideal in that it is downward closed (i.e. $\lambda \in \Lambda' \implies \mu \in \Lambda'$ for all $\mu < \lambda$).  Let $\mathcal{C}_{\Lambda'}$ denote the Serre subcategory generated by $L(\lambda)$ with $\lambda \in \Lambda'$, then:

\begin{proposition} \label{hwsubcat}
\ 
\begin{enumerate}
\item $\mathcal{C}_{\Lambda'}$ is a highest weight category with respect to the poset $\Lambda'$ and standard objects $\Delta(\lambda)$.

\item If $\lambda$ is maximal inside $\Lambda'$ then $\Delta(\lambda)$ is projective inside $\mathcal{C}_{\Lambda'}$.

\item If $\lambda$ is maximal inside $\Lambda'$ then the there exists an injective hull $\nabla(\lambda)$ of $L(\lambda)$ inside $\mathcal{C}_{\Lambda'}$.  These objects are called costandard, and do not depend on the choice of $\Lambda'$ (provided $\lambda$ is maximal in it).

\end{enumerate}

\end{proposition}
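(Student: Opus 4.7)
All three parts come from truncating the structure of $\mathcal{C}$ down to $\mathcal{C}_{\Lambda'}$. The basic observation is that $\Delta(\lambda) \in \mathcal{C}_{\Lambda'}$ for every $\lambda \in \Lambda'$: its composition factors are the $L(\mu)$ with $\mu \le \lambda$, and these all lie in the downward-closed set $\Lambda'$. The Hom-vanishing and End-simplicity axioms for the standards are therefore inherited from $\mathcal{C}$ with nothing to check.

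For Part (1), I would construct the projective cover of $L(\lambda)$ inside $\mathcal{C}_{\Lambda'}$ by pruning $P(\lambda)$. Starting from any $\Delta$-filtration of $P(\lambda)$, the standard vanishing $\text{Ext}^1(\Delta(\mu),\Delta(\nu)) = 0$ for $\mu \not< \nu$ permits adjacent swaps; the downward closure of $\Lambda'$ forces $\mu \not< \nu$ whenever $\mu \notin \Lambda'$ and $\nu \in \Lambda'$, so one may rearrange the filtration so that all $\Delta(\mu)$ with $\mu \notin \Lambda'$ sit at the bottom. Let $F$ denote the resulting bottom piece and set $P_{\Lambda'}(\lambda) := P(\lambda)/F$. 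This quotient inherits a $\Delta$-filtration by $\Delta(\mu)$ with $\mu \in \Lambda'$, $\mu \ge \lambda$, with $\Delta(\lambda)$ on top. A direct Hom-computation along the filtration of $F$ (using $\text{Hom}(\Delta(\mu),L(\sigma)) \ne 0 \Rightarrow \sigma = \mu$) gives $\text{Hom}(F, L(\sigma)) = 0$ for every $\sigma \in \Lambda'$, and the long exact sequence for $0 \to F \to P(\lambda) \to P_{\Lambda'}(\lambda) \to 0$ then yields $\text{Ext}^1_{\mathcal{C}_{\Lambda'}}(P_{\Lambda'}(\lambda), L(\sigma)) = 0$. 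Combined with the inherited simple top $L(\lambda)$, this identifies $P_{\Lambda'}(\lambda)$ as the projective cover of $L(\lambda)$ in $\mathcal{C}_{\Lambda'}$.

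Part (2) is then immediate: if $\lambda$ is maximal in $\Lambda'$ there is no $\mu > \lambda$ in $\Lambda'$, the $\Delta$-filtration of $P_{\Lambda'}(\lambda)$ collapses to $\Delta(\lambda)$ alone, and $\Delta(\lambda) = P_{\Lambda'}(\lambda)$ is projective.

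For Part (3), I would define $\nabla(\lambda)$ as the injective hull of $L(\lambda)$ inside the smallest Serre subcategory $\mathcal{C}_{\le \lambda}$, which exists because $\mathcal{C}_{\le \lambda}$ is a finite highest weight category and so has enough injectives. To show $\nabla(\lambda)$ remains injective in any larger $\mathcal{C}_{\Lambda'}$ in which $\lambda$ is maximal, it suffices to verify $\text{Ext}^1(L(\sigma), \nabla(\lambda)) = 0$ for every $\sigma \in \Lambda'$. Applying $\text{Hom}(-, \nabla(\lambda))$ to $0 \to \text{rad}\,\Delta(\sigma) \to \Delta(\sigma) \to L(\sigma) \to 0$ and using the standard HWC vanishing $\text{Ext}^1(\Delta(\sigma), \nabla(\lambda)) = 0$, this reduces to showing $\text{Hom}(\text{rad}\,\Delta(\sigma), \nabla(\lambda)) = 0$; any nonzero map would carry the simple socle $L(\lambda)$ of $\nabla(\lambda)$ into its image, forcing $L(\lambda)$ to be a composition factor of $\text{rad}\,\Delta(\sigma)$ and hence $\lambda < \sigma$, contradicting maximality of $\lambda$ in $\Lambda'$. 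The construction used only $\mathcal{C}_{\le \lambda}$, so independence from the particular $\Lambda'$ is automatic.

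The main obstacle I anticipate is the $\text{Ext}^1$-rearrangement of $\Delta$-filtrations in Part (1) and the dual BGG-type vanishing used in Part (3); once those two structural facts are in hand, everything else is careful bookkeeping with filtrations.
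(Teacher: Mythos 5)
The paper gives no proof of this proposition at all: it is stated as background, with the reader sent to \cite{CPS}, the appendix of \cite{DonkinBook}, and Losev's worksheet \cite{Losev}. So the only comparison available is with the standard treatment in those references, and your argument is essentially that standard argument, correctly executed. Truncating a $\Delta$-filtration of $P(\lambda)$ using $\text{Ext}^1(\Delta(\mu),\Delta(\nu))=0$ for $\mu \not< \nu$ (which is itself a quick consequence of the stated axioms, via the $\Delta$-filtration of $P(\mu)$) produces the projective cover in $\mathcal{C}_{\Lambda'}$ exactly as you describe; part (2) falls out by collapsing the filtration; and defining $\nabla(\lambda)$ as the injective hull of $L(\lambda)$ in $\mathcal{C}_{\le\lambda}$ makes the independence from $\Lambda'$ automatic.

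The one step you should not leave as a bare citation is $\text{Ext}^1(\Delta(\sigma),\nabla(\lambda))=0$ in part (3). In the usual development this orthogonality is proved \emph{after}, and by means of, precisely the statements you are establishing: projectivity of standards at maximal elements together with injectivity of $\nabla(\lambda)$ in $\mathcal{C}_{\le\lambda}$. Your logical order is in fact sound, since by the time you invoke it you already have parts (1) and (2); but to dispel any appearance of circularity you should include the short proof: given $0 \to \nabla(\lambda) \to E \to \Delta(\sigma) \to 0$, if $\sigma \le \lambda$ then $E$ lies in $\mathcal{C}_{\le \lambda}$ and the sequence splits by injectivity of $\nabla(\lambda)$ there, while if $\sigma \not\le \lambda$ then $\sigma$ is maximal in the poset ideal generated by $\sigma$ and $\lambda$, $E$ lies in the corresponding truncation, and the sequence splits because $\Delta(\sigma)$ is projective there by your part (2). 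Two smaller bookkeeping points: the claim that $\mathcal{C}_{\le\lambda}$ has enough injectives should be justified by observing that the truncation is equivalent to finite-dimensional modules over a finite-dimensional algebra (finitely many simples, enough projectives, finite-dimensional Hom-spaces); and in part (1) you should record that $\text{Ext}^1$ computed in $\mathcal{C}_{\Lambda'}$ agrees with $\text{Ext}^1$ in $\mathcal{C}$ because the Serre subcategory is closed under extensions, so your vanishing against the $L(\sigma)$ with $\sigma \in \Lambda'$ really does give projectivity inside $\mathcal{C}_{\Lambda'}$.
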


\noindent \textbf{Remark:}  $\Delta(\lambda)$ and $\nabla(\lambda)$ are not in general projective or injective in all of $\mathcal{C}$, only within $\mathcal{C}_{\Lambda'}$.

\medskip

Let $\mathcal{C}^\Delta$ and $\mathcal{C}^\nabla$ denote the full subcategories of objects in $\mathcal{C}$ admitting a filtration by standard and costandard objects respectively.

\begin{proposition} For all $\lambda \in \Lambda$ there is a unique (up to isomorphism) object $T(\lambda)$ satisfying:

\begin{enumerate}
\item $T(\lambda) \in \mathcal{C}^\Delta \cap \mathcal{C}^\nabla$ 
\item The irreducible composition factors of $T(\lambda)$ are of the form $L(\mu)$ with $\mu \le \lambda$, and $L(\lambda)$ appears with multiplicity one.
\item $T(\lambda)$ is indecomposable.

\end{enumerate}

\end{proposition}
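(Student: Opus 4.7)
My plan is to build $T(\lambda)$ by iterated extension, bootstrapping off the Ext-orthogonality of standards and costandards, and then to pin down uniqueness by a Hom-lifting argument.

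The first ingredient is the Ext-orthogonality $\text{Ext}^i_{\mathcal{C}}(\Delta(\mu), \nabla(\nu)) = 0$ for all $i \ge 1$ and all $\mu,\nu$. I would prove this by induction on a finite downward-closed ideal $\Lambda' \subset \Lambda$ containing $\mu$ and $\nu$: taking $\mu$ minimal in $\Lambda'$, Proposition~\ref{hwsubcat} identifies the projective cover $P(\mu)$ computed inside $\mathcal{C}_{\Lambda'}$, and the short exact sequence $0 \to K \to P(\mu) \to \Delta(\mu) \to 0$ with $K$ filtered by $\Delta(\mu')$ for $\mu' > \mu$ lets one dimension-shift into the smaller ideal $\Lambda' \setminus \{\mu\}$. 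A formal consequence is that $\mathcal{C}^\Delta$ and $\mathcal{C}^\nabla$ are closed under extensions and that $\text{Ext}^1(X,Y) = 0$ whenever $X \in \mathcal{C}^\Delta$ and $Y \in \mathcal{C}^\nabla$.

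For existence, set $T_0 := \Delta(\lambda)$ and inductively form $T_{k+1}$ as a nontrivial extension $0 \to T_k \to T_{k+1} \to \Delta(\nu) \to 0$ for some $\nu$ with $\text{Ext}^1(\Delta(\nu), T_k) \ne 0$, repeating as long as $T_k \notin \mathcal{C}^\nabla$. Each new $\nu$ must satisfy $\nu < \lambda$, since $L(\lambda)$ is allowed only once, so every $T_k$ sits in $\mathcal{C}^\Delta$ with head $L(\lambda)$ and only composition factors $L(\mu)$ with $\mu \le \lambda$. Setting $T(\lambda) := T_N$ for the terminal step, dualizing gives $T(\lambda) \in \mathcal{C}^\nabla$ with socle $L(\lambda)$, and multiplicity-one of $L(\lambda)$ --- as the unique simultaneous head and socle factor --- forces $\text{End}(T(\lambda))$ to be local, hence indecomposability.

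For uniqueness, given another candidate $T'$, the surjection $T(\lambda) \twoheadrightarrow L(\lambda)$ lifts through $T' \twoheadrightarrow L(\lambda)$ to a morphism $f : T(\lambda) \to T'$, the obstruction $\text{Ext}^1(T(\lambda), \ker)$ vanishing because the kernel is $\nabla$-filtered (it is the part of the $\nabla$-filtration complementary to the unique $\nabla(\lambda)$-layer containing the socle copy of $L(\lambda)$). Symmetry produces $g : T' \to T(\lambda)$, and $g \circ f$ is nonzero on the multiplicity-one $L(\lambda)$ head, hence not nilpotent; local-ness of $\text{End}(T(\lambda))$ upgrades this to an automorphism, yielding $T \cong T'$. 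The main obstacle I foresee is termination of the iterative construction: while the Ext and lifting manipulations are formal, halting the procedure after finitely many steps requires a genuine a priori bound on the composition multiplicities of $T(\lambda)$ --- essentially a preview of BGG reciprocity inside $\mathcal{C}_{\Lambda_{\le \lambda}}$.
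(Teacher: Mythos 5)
You are in the awkward position that the paper itself does not prove this proposition (it is recalled as background, with the proof deferred to \cite{CPS}, \cite{DonkinBook} and \cite{Losev}), so I am judging your argument on its own terms. Your overall strategy --- Ext-orthogonality of standards against costandards, iterated extensions starting from $\Delta(\lambda)$, then lifting plus a Fitting-type argument --- is indeed the standard Ringel route, but two steps are genuinely broken. First, termination: you flag it yourself, but the fix is not a multiplicity bound or ``a preview of BGG reciprocity.'' The standard repair is to replace greedy one-at-a-time nonsplit extensions by \emph{universal} extensions, processing the weights $\mu \le \lambda$ in a linear order refining the partial order, from larger to smaller. The key fact is $\text{Ext}^1(\Delta(\nu),\Delta(\mu)) \ne 0 \Rightarrow \nu < \mu$ (dimension-shift along $0 \to K \to P(\nu) \to \Delta(\nu) \to 0$ with $K$ filtered by $\Delta(\sigma)$, $\sigma > \nu$); the long exact sequence then shows that once $\text{Ext}^1(\Delta(\mu_i), T) = 0$ has been achieved it is not destroyed by later universal extensions, so the process stops after at most $\#\{\mu \le \lambda\}$ steps --- finiteness of $\Lambda$ is all you need. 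Membership in $\mathcal{C}^\nabla$ then comes from the Ext-criterion (which itself requires proof), not from ``dualizing'': an abstract highest weight category has no duality exchanging $\Delta$ and $\nabla$. Also, your reason that each new $\nu$ is $< \lambda$ (``$L(\lambda)$ is allowed only once'') is circular --- multiplicity one is a conclusion about $T(\lambda)$, not a constraint you may impose; the correct reason is again the Ext-vanishing fact above. Your sketch of the orthogonality lemma has a similar poset slip: you cannot make a given $\mu$ minimal by shrinking a downward-closed ideal, and removing a minimal element does not leave an ideal.

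Second, and more seriously, the claims that $T(\lambda)$ has head $L(\lambda)$ and socle $L(\lambda)$ are false, and both your indecomposability and uniqueness arguments rest on them. Any nonsplit extension $0 \to T_k \to T_{k+1} \to \Delta(\nu) \to 0$ forces $L(\nu)$ into the head of $T_{k+1}$, so the head is not $L(\lambda)$ as soon as one step is needed: already for $S(2,p)$ (i.e.\ $SL_2$ in characteristic $p$) the indecomposable tilting module of highest weight $p$ is uniserial with layers $L(p-2) \mid L(p) \mid L(p-2)$, so there is no surjection onto $L(\lambda)$ to lift, and the ``unique head-and-socle factor'' locality argument has no premise. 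The standard repairs: multiplicity one follows from the $\Delta$-filtration since $[\Delta(\mu):L(\lambda)] = \delta_{\mu\lambda}$ for $\mu \le \lambda$; indecomposability is obtained by passing to the unique indecomposable summand containing $L(\lambda)$, after checking (Ext-criterion again) that summands of $\Delta$- resp.\ $\nabla$-filtered objects are again filtered; and for uniqueness one lifts not a map onto $L(\lambda)$ but the inclusion $\Delta(\lambda) \hookrightarrow T'$ (arrange $\Delta(\lambda)$ at the bottom of the filtration, possible since $\lambda$ is maximal among the factors occurring) across $\Delta(\lambda) \hookrightarrow T(\lambda)$, the obstruction lying in $\text{Ext}^1\bigl(T(\lambda)/\Delta(\lambda), T'\bigr) = 0$; the two resulting maps compose to an endomorphism invertible on the multiplicity-one factor $L(\lambda)$, and Fitting's lemma for an indecomposable object of finite length finishes. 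With these corrections your outline becomes the standard proof; as written, the construction may not terminate and the head/socle-based steps do not go through.
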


These modules $T(\lambda)$ and direct sums thereof are called \emph{tilting} modules.  A tilting module $T$ is said to be a \emph{full} tilting module if it contains $T(\lambda)$ as a summand for all $\lambda \in \Lambda$.  We have the following theorem known as \emph{Ringel duality}:

\begin{theorem}\label{ringel}  Let $T$ be a full tilting module for a highest weight category $\mathcal{C}$, and let $A := \text{End}_{\mathcal{C}}(T)$ be its endomorphism algebra.  Then:

\begin{enumerate}

\item The category $\mathcal{C}^\vee := A\text{-mod}$ of finitely generated $A$ modules is a highest weight category with respect to the poset $\Lambda^{op}$ and standard objects $\text{Hom}(\Delta(\lambda),T)$. It is called the Ringel dual category to $\mathcal{C}$.

\item $\mathcal{C}^\vee$ does not depend on the choice of full tilting module $T$, in other words the algebras $A$ we get in the previous part are Morita equivalent for different $T$.

\item There is natural equivalence $\mathcal{C} \cong (\mathcal{C}^{\vee})^{ \vee}$ respecting the highest weight structure.

\end{enumerate}

\end{theorem}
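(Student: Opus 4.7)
The plan is to construct the equivalence and verify the highest weight axioms by using the contravariant functor $F := \text{Hom}_{\mathcal{C}}(-, T) : \mathcal{C} \to A\text{-mod}^{op}$. The crucial input is that $F$ is exact on the subcategory $\mathcal{C}^\Delta$: since $T$ admits a costandard filtration, it suffices to show $\text{Ext}^1_{\mathcal{C}}(\Delta(\lambda), \nabla(\mu)) = 0$, which is a standard consequence of the highest weight axioms (using the projective resolution coming from the $\Delta$-filtration of $P(\mu)$ and the fact that $\nabla(\mu)$ has socle $L(\mu)$). First I would use this to show that $\Delta^{\vee}(\lambda) := F(\Delta(\lambda)) = \text{Hom}(\Delta(\lambda), T)$ behaves as a standard object for $\Lambda^{op}$: the $\text{End}$ condition follows since $\Delta(\lambda)$ appears exactly once in a $\Delta$-filtration of $T(\lambda) \subset T$ and is killed by maps to $T(\mu)$ with $\mu < \lambda$, and the $\text{Hom}$-vanishing $\text{Hom}(\Delta^{\vee}(\lambda), \Delta^{\vee}(\mu)) \ne 0 \implies \lambda \ge \mu$ follows from the fact that maps $\Delta(\mu) \to \Delta(\lambda)$ force $\mu \le \lambda$.

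Next I would identify the indecomposable projectives of $A\text{-mod}$. Since $A = \text{End}(T)$, the indecomposable projective $A$-modules are $P^{\vee}(\lambda) := \text{Hom}(T(\lambda), T)$, and these are in bijection with $\Lambda$ via the indexing of indecomposable summands of $T$. The key computation is to filter $P^{\vee}(\lambda)$ by $\Delta^{\vee}(\mu)$'s: applying $F$ to the $\nabla$-filtration $0 = F_0 \subset F_1 \subset \cdots \subset T(\lambda)$ (with $F_i/F_{i-1} \cong \nabla(\mu_i)$) and using $\text{Ext}^1(\nabla(\mu), \nabla(\nu)) = 0$ together with the duality $\text{Hom}(\nabla(\mu), T) \cong \Delta^{\vee}(\mu)$ (which comes from pairing standard and costandard objects via $T$) yields the required filtration, with the top quotient $\Delta^{\vee}(\lambda)$ appearing exactly once and the remaining $\Delta^{\vee}(\mu)$ having $\mu < \lambda$ in $\Lambda$, i.e.\ $\mu > \lambda$ in $\Lambda^{op}$, as required. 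This establishes part (1).

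For part (2), Morita invariance, I would note that any full tilting module is of the form $T_{\min}^{\oplus m_\lambda}$ where $T_{\min} = \bigoplus_\lambda T(\lambda)$ has each indecomposable summand with multiplicity one, and endomorphism rings of modules with the same set of indecomposable summands (differing only in multiplicities $\ge 1$) are Morita equivalent via the standard progenerator argument. For part (3), the biduality, the natural candidate equivalence sends $X \in \mathcal{C}$ to $\text{Hom}(X, T)$ viewed with its $A$-action; the proof would show that $T$, now regarded as an $A$-module, is itself a full tilting module for $\mathcal{C}^{\vee}$ (its indecomposable summands correspond to the costandard objects $\nabla^{\vee}(\lambda)$), and that $\text{End}_A(T) \cong \mathcal{C}$-end (Morita equivalent to $\mathcal{C}$). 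The main obstacle throughout is the bookkeeping around which Hom-vanishings and Ext-vanishings hold on which subcategories, and organizing the $\Delta$- and $\nabla$-filtrations so that the contravariant functor $F$ swaps them in exactly the way needed to flip the order on $\Lambda$.
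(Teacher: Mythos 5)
First, note that the paper does not prove this theorem at all: it is quoted as standard background (Ringel duality), with the proof deferred to \cite{CPS}, the appendix of \cite{DonkinBook}, and \cite{Losev}. So your attempt is being measured against the standard argument rather than anything in the text. Your overall strategy (the contravariant functor $F=\mathrm{Hom}_{\mathcal{C}}(-,T)$, its exactness on $\mathcal{C}^{\Delta}$ via $\mathrm{Ext}^1(\Delta,\nabla)=0$, identification of the projectives $\mathrm{Hom}(T(\lambda),T)$, and the progenerator argument for part (2)) is indeed the standard route. But the central computation in your part (1) is wrong as written. You apply $F$ to the $\nabla$-filtration of $T(\lambda)$ and invoke ``$\mathrm{Ext}^1(\nabla(\mu),\nabla(\nu))=0$'' together with an identification $\mathrm{Hom}(\nabla(\mu),T)\cong\mathrm{Hom}(\Delta(\mu),T)$. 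Neither holds: $\mathrm{Ext}^1$ between costandards is only directionally controlled, not zero, and the two Hom-spaces differ in general. Already in the two-simple example (the principal block of category $\mathcal{O}$ for $\mathfrak{sl}_2$, with $\Delta(\lambda)$ of length two and $T(\lambda)$ uniserial of length three) one has $\dim\mathrm{Hom}(\Delta(\lambda),T)=1$ but $\dim\mathrm{Hom}(\nabla(\lambda),T)=2$, and the short exact sequence $0\to L(\mu)\to T(\lambda)\to\nabla(\lambda)\to 0$ gives a nonzero $\mathrm{Ext}^1(\nabla(\lambda),\nabla(\mu))$. Moreover, even granting your claims, the contravariant functor sends the top quotient $\nabla(\lambda)$ of $T(\lambda)$ to a \emph{submodule} of $P^{\vee}(\lambda)$, not to the top quotient, so the required ``$\Delta^{\vee}(\lambda)$ on top, kernel filtered by $\Delta^{\vee}(\mu)$ with $\mu>\lambda$ in $\Lambda^{op}$'' would not follow from your construction.

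The repair is straightforward and uses only what you already established: apply $F$ to the $\Delta$-filtration of $T(\lambda)$, which has $\Delta(\lambda)$ as a submodule occurring once and all other subquotients $\Delta(\mu)$ with $\mu<\lambda$. Exactness of $F$ on these short exact sequences needs only $\mathrm{Ext}^1(\Delta(\mu),T)=0$, which is exactly your $\mathrm{Ext}^1(\Delta,\nabla)=0$ statement since $T\in\mathcal{C}^{\nabla}$; contravariance then puts $\Delta^{\vee}(\lambda)=F(\Delta(\lambda))$ as the top quotient of $P^{\vee}(\lambda)$ with the kernel filtered by $\Delta^{\vee}(\mu)$, $\mu<\lambda$ in $\Lambda$, as needed. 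Two further points are glossed over: the $\mathrm{End}$- and $\mathrm{Hom}$-conditions for the $\Delta^{\vee}(\lambda)$ really rest on $F$ being fully faithful on $\mathcal{C}^{\Delta}$ (equivalently, that $F$ gives an anti-equivalence of $\mathcal{C}^{\Delta}$ with the $\Delta^{\vee}$-filtered $A$-modules), which deserves an argument rather than an appeal to filtration multiplicities; and part (3) as sketched (``$T$ regarded as an $A$-module is a full tilting module for $\mathcal{C}^{\vee}$'') needs the identification of the indecomposable tiltings of $\mathcal{C}^{\vee}$ and of $\mathrm{End}_{A}$ of that module with (an algebra Morita equivalent to) the original one, which is the real content of biduality and is currently only asserted.
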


Of particular importance to us will be the following corollary which says roughly that a highest weight category is completely determined by its subcategory of tilting modules.  

Suppose $\mathcal{C}$ and  $\mathcal{C}'$ are highest weight categories on the same poset $\Lambda$. Let $\text{Tilt}(\mathcal{C})$ and $\text{Tilt}(\mathcal{C}')$  denote the full subcategories of tilting modules with indecomposable tilting modules $T(\lambda)$ and $T'(\lambda)$ respectively for $\lambda \in \Lambda$.  

\begin{corollary}\label{ringelcor}
Any equivalence of categories $\text{Tilt}(\mathcal{C}) \cong \text{Tilt}(\mathcal{C}')$ sending $T(\lambda)$ to $T'(\lambda)$ for all $\lambda \in \Lambda$ extends to an equivalence of categories $\mathcal{C} \cong \mathcal{C}'$ respecting the highest weight structures.
\end{corollary}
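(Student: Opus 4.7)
The plan is to transport the given tilting equivalence through Ringel duality, and then apply Ringel duality a second time to return. Write $\Phi$ for the given equivalence $\text{Tilt}(\mathcal{C}) \xrightarrow{\sim} \text{Tilt}(\mathcal{C}')$, and let $T = \bigoplus_{\lambda \in \Lambda} T(\lambda)$ be a full tilting module in $\mathcal{C}$, so that $T' := \Phi(T) \cong \bigoplus_{\lambda \in \Lambda} T'(\lambda)$ is a full tilting module in $\mathcal{C}'$. Functoriality of $\Phi$ on endomorphisms yields a $k$-algebra isomorphism $A := \text{End}_\mathcal{C}(T) \xrightarrow{\sim} \text{End}_{\mathcal{C}'}(T') =: A'$ identifying, for each $\lambda$, the primitive idempotent projecting onto the summand $T(\lambda)$ with the one projecting onto $T'(\lambda)$.

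By Theorem \ref{ringel}(1) we have $\mathcal{C}^\vee \simeq A\text{-mod}$ and $\mathcal{C}'^\vee \simeq A'\text{-mod}$, so the algebra isomorphism lifts to an equivalence $\Psi : \mathcal{C}^\vee \xrightarrow{\sim} \mathcal{C}'^\vee$; the matched idempotents then force $\Psi(P^\vee(\lambda)) \cong P'^\vee(\lambda)$ and $\Psi(L^\vee(\lambda)) \cong L'^\vee(\lambda)$. The step I expect to be the main obstacle is showing that $\Psi$ also matches standards, and hence respects the full highest weight structures on the common poset $\Lambda^{op}$. For this I would appeal to the intrinsic characterization that in any highest weight category, $\Delta(\lambda)$ is the maximal quotient of $P(\lambda)$ whose composition factors $L(\mu)$ all satisfy $\mu \le \lambda$. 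A short induction along a $\Delta$-filtration of $\ker(P(\lambda) \twoheadrightarrow \Delta(\lambda))$ confirms this: any further quotient of $P(\lambda)$ satisfying the composition-factor constraint must kill every $\Delta(\mu_i)$ with $\mu_i > \lambda$ appearing in the filtration, since a nonzero map out of $\Delta(\mu_i)$ would introduce the forbidden simple $L(\mu_i)$ as a composition factor. Because this characterization uses only the poset and the labelling of simples and projectives --- all preserved by $\Psi$ --- we obtain $\Psi(\Delta^\vee(\lambda)) \cong \Delta'^\vee(\lambda)$, so $\Psi$ is an equivalence of highest weight categories.

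Finally, apply Theorem \ref{ringel}(3): the canonical equivalences $\mathcal{C} \simeq (\mathcal{C}^\vee)^\vee$ and $(\mathcal{C}'^\vee)^\vee \simeq \mathcal{C}'$ respect highest weight structure, and the Ringel dual construction applied to $\Psi$ produces an equivalence $(\mathcal{C}^\vee)^\vee \xrightarrow{\sim} (\mathcal{C}'^\vee)^\vee$ of highest weight categories. Composing these produces the desired equivalence $\mathcal{C} \xrightarrow{\sim} \mathcal{C}'$; tracking the labelling of indecomposable tilting summands through the double dualization shows it agrees with $\Phi$ on tilting subcategories, proving the extension claim.
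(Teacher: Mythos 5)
Your proposal is correct and follows essentially the same route as the paper: identify $\mathrm{End}_{\mathcal{C}}(T)\cong\mathrm{End}_{\mathcal{C}'}(T')$ for full tilting modules via the given equivalence, pass to the Ringel dual (Theorem \ref{ringel}(1)), and return by double Ringel duality (Theorem \ref{ringel}(3)). Your explicit verification that the induced equivalence of Ringel duals matches standard objects (via the maximal-quotient characterization of $\Delta(\lambda)$) just spells out a point the paper's proof leaves implicit in calling the identifications canonical.
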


\noindent \textbf{Proof:}  Let $T$ and $T'$ denote minimal full tilting modules in $\text{Tilt}(\mathcal{C})$ and $\text{Tilt}(\mathcal{C}')$ respectively, in other words they are a direct sum of all the indecomposable tilting modules with multiplicity one.  The equivalence of categories $\text{Tilt}(\mathcal{C}) \cong \text{Tilt}(\mathcal{C}')$ gives an isomorphism of algebras $\text{End}_{\mathcal{C}}(T) \cong \text{End}_{\mathcal{C}'}(T') =: A$.  Then the previous theorem gives us canonical equivalences between both $\mathcal{C}$ and  $\mathcal{C}'$, and $(A\text{-mod})^\vee$, identifying both $\text{Tilt}(\mathcal{C})$ and $\text{Tilt}(\mathcal{C}')$ with the tilting modules. $\square$

\medskip

\noindent \textbf{Remark:} We could have also developed much of this section at the level of rings instead of categories. We will say that a finite dimensional algebra over a field $k$ is \emph{quasi-hereditary} if its category of finitely generated modules admits the structure of a highest weight category. Such algebras may also be characterized internally in terms of their poset of ideals, but we won't use this characterization.

\end{subsection}

\begin{subsection}{Schur algebras and the Schur functor}\label{Schursect}

Another family of algebras that will be important to us will be the Schur algebras $S(D,n)$. These algebras are well studied and govern the polynomial representations of $GL(V)$, where $V$ is an $D$ dimensional vector space (we will be mostly interested in $D$ large, hence the capital letter). For a background on their structure and representation theory we refer to \cite{DonkinBook}, but for our particular purposes we will be primarily following the exposition in \cite{Hemmer}. They are defined as:

$$S(D,n) \cong \text{End}_{S_n}(V^{\otimes n})$$

In particular these algebras are known to be quasi-hereditary, and hence the category of modules over $S(D,n)$ is a highest weight category with the underlying poset being the dominance order on partitions of size n.  If $\lambda$ is a partition of $n$ with at most $D$ parts we will let $\Delta(\lambda), L(\lambda), P(\lambda)$   and $ T(\lambda)$ denote the Weyl (standard), irreducible, projective, and tilting modules corresponding to $\lambda$.

For us the main fact about Schur algebras we will care about is the existence of the Schur functor $\mathcal{F}: S(D, n)\text{-mod} \rightarrow Rep(S_n)$, along with its right adjoint $\mathcal{G}$ which  satisfy:

\begin{theorem}\textbf{(For a good summaries see \cite{Hemmer} or \cite{HN})}\label{schurfunct}
\begin{enumerate}

\item $\mathcal{F}$ is exact, and $\mathcal{G}$ is left exact.

\item $\mathcal{F}(\Delta(\lambda)) = S_\lambda  \hspace{1cm} \mathcal{F}(L(\lambda)) = D_\lambda \text{ if } \lambda \text{ is } p \text{-restricted} \text{ and } 0 \text { otherwise}$

$\mathcal{F}(P(\lambda)) = Y(\lambda)  \hspace{.65cm}  \mathcal{F}(T(\lambda)) = Y(\lambda')\otimes \text{sgn}$

\item  $\mathcal{F}\circ \mathcal{G} \cong \text{Id}$

\item 
$\mathcal{G}(Y(\lambda)\otimes \text{sgn}) = T(\lambda')$

\item In characteristic at least $5$ with $D \ge n$ then $\mathcal{F}$ and $\mathcal{G}$ restrict to equivalences between the categories of $S(D,n)$-modules with a Weyl filtration, and of $S_n$-modules with a dual Specht filtration. In particular in this case we have that $\mathcal{G}(S_\lambda) = \Delta(\lambda)$.

 \end{enumerate}

\end{theorem}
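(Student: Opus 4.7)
The plan is to realize the Schur functor through an explicit idempotent and then reduce each clause to a structural calculation. Assuming $D \ge n$, choose the weight $\omega = (1,1,\ldots,1,0,\ldots,0)$ and let $e \in S(D,n)$ be the corresponding weight idempotent acting on $V^{\otimes n}$. Schur--Weyl duality gives $eS(D,n)e \cong kS_n$, and one sets $\mathcal{F}(M) = eM$ with right adjoint $\mathcal{G}(N) = \mathrm{Hom}_{kS_n}(eS(D,n), N)$. Part (1) is then immediate: multiplication by $e$ is exact, and any Hom-functor is left exact. Part (3) drops out from the standard idempotent-truncation identity $\mathcal{F}\mathcal{G}(N) = e\cdot\mathrm{Hom}_{kS_n}(eS(D,n), N) \cong \mathrm{Hom}_{kS_n}(eS(D,n)e, N) = \mathrm{Hom}_{kS_n}(kS_n, N) = N$.

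For Part (2), I would exploit the universal properties recalled in Section~\ref{rev}. The Weyl module $\Delta(\lambda)$ is presented as a cyclic quotient of an induced module whose weight-$\omega$ subspace is exactly the permutation module $M(\lambda)$; applying the exact functor $\mathcal{F}$ transports the presentation verbatim to the defining quotient presentation of the dual Specht module $S_\lambda$. Because $\mathcal{F}$ is an idempotent truncation, $\mathcal{F}(L(\lambda))$ is either simple or zero, and the cardinality match between $p$-restricted partitions and simple $S_n$-modules (Theorem~\ref{modreview}) forces the dichotomy $D_\lambda$ vs.\ $0$ depending on $p$-restrictedness. For the projective case, I would use that $V^{\otimes n}$ is a full tilting module and in particular a projective generator of $\mathcal{C}^\Delta$, so each $P(\lambda)$ is a summand of some $V^{\otimes n}$-based module, $\mathcal{F}(P(\lambda))$ is indecomposable (as the image under an idempotent functor of an indecomposable with $\mathcal{F}(P(\lambda)) \neq 0$), and contains $\mathcal{F}(\Delta(\lambda)) = S_\lambda$, so Theorem~\ref{young} identifies it as $Y(\lambda)$. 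The tilting formula and Part (4) then follow from Ringel duality: the sign twist implements the highest-weight-reversing involution $\lambda \mapsto \lambda'$, and combining this with the projective case yields both $\mathcal{F}(T(\lambda)) = Y(\lambda')\otimes\mathrm{sgn}$ and its $\mathcal{G}$-inverse $\mathcal{G}(Y(\lambda)\otimes\mathrm{sgn}) = T(\lambda')$ via Part (3).

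The main obstacle is Part (5). The first four parts are essentially formal consequences of the idempotent setup together with the highest weight structure, but Part (5) is genuinely delicate: it asserts an equivalence of categories of filtered modules, and the hypothesis $p \ge 5$ is essential rather than cosmetic. The content here is an Ext-comparison theorem of Hemmer--Nakano stating that $\mathrm{Ext}^i_{S(D,n)}(\Delta(\lambda),\Delta(\mu)) \cong \mathrm{Ext}^i_{S_n}(S_\lambda, S_\mu)$ for $i \ge 0$ under these hypotheses, which ensures that $\mathcal{F}$ loses no extension information on $\mathcal{C}^\Delta$. Small-characteristic counterexamples obstruct this directly, so any proof must eventually invoke the Hemmer--Nakano machinery; I would cite it rather than reprove it, and then derive the equivalence formally by observing that $\mathcal{G}(S_\lambda) = \Delta(\lambda)$ follows from $\mathcal{F}(\Delta(\lambda)) = S_\lambda$ together with the full faithfulness coming from the Ext-comparison.
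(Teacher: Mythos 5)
The paper does not actually prove this theorem: it is quoted from the literature, with \cite{Hemmer} and \cite{HN} cited for the overview and, crucially, section 7 of \cite{DPS} cited for a characteristic-free proof of part (4). Your idempotent setup (the $(1^n)$-weight idempotent $e$ with $eS(D,n)e\cong kS_n$, $\mathcal{F}=e(-)$, $\mathcal{G}=\mathrm{Hom}_{kS_n}(eS(D,n),-)$) is the standard route taken in those references, and your treatments of parts (1), (3) and (5) (citing the Hemmer--Nakano Ext-comparison for (5)) are fine. However, two of your steps have genuine problems.

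First, $V^{\otimes n}$ is \emph{not} a full tilting module and not a projective generator. As a left $S(D,n)$-module $V^{\otimes n}\cong S(D,n)e$, whose indecomposable projective summands are exactly the $P(\lambda)$ with $\lambda$ $p$-restricted (with multiplicity $\dim D_\lambda$); and if $V^{\otimes n}$ were full tilting, Ringel duality (Theorem \ref{ringel}) would force its endomorphism algebra $kS_n$ to be quasi-hereditary, contradicting the disclaimer made in the paper (already for $p=2$, $n=D=2$ one has $V^{\otimes 2}=T(2)$ and $T(1,1)=\Lambda^2V$ is not a summand). To get $\mathcal{F}(P(\lambda))=Y(\lambda)$ for \emph{all} $\lambda$ you should instead use all weight idempotents: $P(\lambda)$ is a summand of $S(D,n)e_\lambda$, one computes $eS(D,n)e_\lambda\cong M(\lambda)$ as $kS_n$-modules, and full faithfulness of $\mathcal{F}$ on projectives follows from the double centralizer property $S(D,n)=\mathrm{End}_{kS_n}(V^{\otimes n})$; note also that $\mathcal{F}(P(\lambda))$ \emph{surjects onto} $S_\lambda$ (apply $\mathcal{F}$ to $P(\lambda)\twoheadrightarrow\Delta(\lambda)$) rather than containing it, which is precisely the characterization in Theorem \ref{young}(2). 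Second, part (4) is not a formal consequence of part (3) together with $\mathcal{F}(T(\lambda))=Y(\lambda')\otimes\mathrm{sgn}$: since $\mathcal{F}$ kills the simples $L(\mu)$ with $\mu$ not $p$-restricted, knowing $\mathcal{F}\mathcal{G}\cong\mathrm{Id}$ and the image of $T(\lambda')$ does not determine $\mathcal{G}(Y(\lambda)\otimes\mathrm{sgn})$; one must show the unit $T(\lambda')\to\mathcal{G}\mathcal{F}(T(\lambda'))$ is an isomorphism, and that is exactly the nontrivial content for which the paper points to section 7 of \cite{DPS} (the argument in \cite{HN} needing $p\ge 5$). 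Relatedly, your cardinality argument for $\mathcal{F}(L(\lambda))$ only shows that the correct number of simples survive, not which ones nor that the labels match; the standard argument identifies $\mathcal{F}(L(\lambda))$ with the $(1^n)$-weight space of $L(\lambda)$ and shows it is nonzero exactly when $\lambda$ is $p$-restricted.
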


For a good overview of these functors we'll recommend section $2$ of \cite{Hemmer}, many of these results can also be found in \cite{HN} or \cite{DonkinBook}.  A proof of part $4$ that works in arbitrary characteristic can be found in section $7$ of \cite{DPS}.  We'll note that when we were writing an earlier version of this paper we were only aware of the proof in \cite{HN}, which requires characteristic at least $5$. Now our main results will go through without any restrictions on characteristic.

Really these are a family of functors depending on $D$ and $n$, but the convention seems to be to just always denote them as $\mathcal{F}$ and $\mathcal{G}$ without any subscript or superscript, so we will stick with that.

Note that these are well behaved with respect to the labeling of irreducibles by $p$-restricted partitions, but for our purposes it will be more natural to use the indexing of irreducibles by $p$-regular partitions. So it will be useful for us to correct this by defining the $\emph{twisted Schur functors}$ by:

$$\tilde{\mathcal{F}}(M) = F(M) \otimes \text{sgn} \hspace{1cm} \tilde{\mathcal{G}}(M) = G(M \otimes \text{sgn})$$

We have the following corollary describing some immediate properties of these functors:

\begin{corollary}\label{twistedschur}
\
\begin{enumerate}

\item $\tilde{\mathcal{F}}$ is exact, $\tilde{\mathcal{G}}$ is left exact and right adjoint to $\tilde{\mathcal{F}}$.

\item $\tilde{\mathcal{F}}(\Delta(\lambda)) = S^{\lambda'}  \hspace{2cm} \tilde{\mathcal{F}}(L(\lambda)) = D^{\lambda'} \text{ or } 0$

$\tilde{\mathcal{F}}(P(\lambda)) = Y(\lambda) \otimes \text{sgn}   \hspace{.82cm}  \tilde{\mathcal{F}}(T(\lambda)) = Y(\lambda')$

\item  $\tilde{\mathcal{F}}\circ \tilde{\mathcal{G}} \cong \text{Id}$

\item $\tilde{\mathcal{G}}(Y(\lambda)) = T(\lambda')$

\item In characteristic at least $5$ and $D \ge n$ then $\tilde{\mathcal{F}}$ and $\tilde{\mathcal{G}}$ restrict to equivalences between the categories of $S(D,n)$-modules with a Weyl filtration, and of $S_n$-modules with a Specht filtration. In particular $\tilde{\mathcal{G}}(S^\lambda) = \Delta(\lambda')$

\end{enumerate}
\end{corollary}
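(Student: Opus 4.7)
The plan is to derive each part of the corollary mechanically from the corresponding statement in Theorem \ref{schurfunct}, using that the sign twist $N \mapsto N \otimes \text{sgn}$ is an exact auto-equivalence of $\text{Rep}(S_n)$ and the identities $S_\lambda \otimes \text{sgn} = S^{\lambda'}$, $D_\lambda \otimes \text{sgn} = D^{\lambda'}$ recalled at the end of section \ref{rev}.

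For part $1$, exactness of $\tilde{\mathcal{F}}$ and left exactness of $\tilde{\mathcal{G}}$ follow immediately from the corresponding properties of $\mathcal{F}, \mathcal{G}$ since tensoring with $\text{sgn}$ on either side of the target category is exact. The adjunction $\text{Hom}(\tilde{\mathcal{F}}(M), N) \cong \text{Hom}(M, \tilde{\mathcal{G}}(N))$ is obtained by the chain of natural isomorphisms $\text{Hom}(\mathcal{F}(M) \otimes \text{sgn}, N) \cong \text{Hom}(\mathcal{F}(M), N \otimes \text{sgn}) \cong \text{Hom}(M, \mathcal{G}(N \otimes \text{sgn}))$, where the first step uses that $\text{sgn}$ is invertible.

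For part $2$, I would simply tensor the formulas in Theorem \ref{schurfunct}(2) with $\text{sgn}$: $\mathcal{F}(\Delta(\lambda)) \otimes \text{sgn} = S_\lambda \otimes \text{sgn} = S^{\lambda'}$; $\mathcal{F}(L(\lambda)) \otimes \text{sgn} = D_\lambda \otimes \text{sgn} = D^{\lambda'}$ (with the same vanishing criterion, namely $\lambda$ being $p$-restricted, equivalently $\lambda'$ being $p$-regular); $\mathcal{F}(P(\lambda)) \otimes \text{sgn} = Y(\lambda) \otimes \text{sgn}$; and $\mathcal{F}(T(\lambda)) \otimes \text{sgn} = Y(\lambda') \otimes \text{sgn} \otimes \text{sgn} = Y(\lambda')$. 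Part $3$ is a one-line calculation: $\tilde{\mathcal{F}}(\tilde{\mathcal{G}}(N)) = \mathcal{F}(\mathcal{G}(N \otimes \text{sgn})) \otimes \text{sgn} \cong (N \otimes \text{sgn}) \otimes \text{sgn} \cong N$ by Theorem \ref{schurfunct}(3). Part $4$ is essentially a restatement: $\tilde{\mathcal{G}}(Y(\lambda)) = \mathcal{G}(Y(\lambda) \otimes \text{sgn}) = T(\lambda')$ by Theorem \ref{schurfunct}(4).

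For part $5$, Theorem \ref{schurfunct}(5) gives, in the relevant range of characteristic and $D$, inverse equivalences between Weyl-filtered $S(D,n)$-modules and dual-Specht-filtered $S_n$-modules. Since the sign twist is an exact auto-equivalence of $\text{Rep}(S_n)$ that sends dual Specht modules $S_\mu$ to Specht modules $S^{\mu'}$, it restricts to an equivalence between the subcategories of modules admitting a dual Specht filtration and those admitting a Specht filtration. Composing, $\tilde{\mathcal{F}}$ and $\tilde{\mathcal{G}}$ restrict to inverse equivalences between Weyl-filtered modules and Specht-filtered modules; in particular $\tilde{\mathcal{G}}(S^\lambda) = \mathcal{G}(S^\lambda \otimes \text{sgn}) = \mathcal{G}(S_{\lambda'}) = \Delta(\lambda')$. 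None of the steps pose any real obstacle — the content is entirely bookkeeping to track the transpose that the sign twist introduces on partition labels.
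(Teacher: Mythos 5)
Your proposal is correct and follows exactly the paper's route: the paper's proof is the one-line observation that the corollary is Theorem \ref{schurfunct} translated through the sign twist using $S^\lambda = S_{\lambda'}\otimes \text{sgn}$ and $D^\lambda = D_{\lambda'}\otimes \text{sgn}$, which is precisely the bookkeeping you carried out in detail.
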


\noindent \textbf{Proof:} This is just translating the previous theorem into this language, at a couple of points using the identities $S^\lambda = S_{\lambda'} \otimes \text{sgn}$ and $D^\lambda = D_{\lambda'} \otimes \text{sgn}$. $\square$

\medskip

\noindent \textbf{Remark:} For $n$ larger than the characteristic of $k$,  $k[S_n]$ has infinite global homological dimension, but its representation theory is closely related to that of $S(D,n)$ which has finite global homological dimension.  This motivates the following intuition: ``$\text{Spec}(S(D,n))$ is a non-commutative resolution of singularities for $\text{Spec}(k[S_n])$, and the twisted Schur functors correspond to pushing forward and pulling back sheaves". We won't look into making this statement any more precise or rigorous, but it is some justification for why we will pass to Schur algebras at one point to make things easier.

\end{subsection}

\begin{subsection}{The ring of integer valued polynomials}\label{polysection}

Let $\mathcal{R} \subset \mathbb{Q}[x]$ be the set of all rational polynomials $f(x)$ such that $f(n)$ is an integer for all integers $n$.  We have the following well known theorem:

\begin{proposition}\textbf{(P\'olya 1915 \cite{Polya})}\label{binom}
$\mathcal{R}$ is freely generated as an abelian group by the binomial coefficient polynomials $${x \choose k} = \frac{x(x-1)\dots(x-k+1)}{k!}$$
\end{proposition}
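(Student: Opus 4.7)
The plan is to prove this in three steps: verify the $\binom{x}{k}$ lie in $\mathcal{R}$, show they are linearly independent (easy), and show they span $\mathcal{R}$ over $\mathbb{Z}$ (the real content). The key tool throughout will be the forward difference operator $\Delta f(x) := f(x+1) - f(x)$ acting on $\mathbb{Q}[x]$.

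First I would check that each $\binom{x}{k}$ actually lies in $\mathcal{R}$. For non-negative integer inputs this is the classical combinatorial interpretation; for negative integer inputs one uses the identity $\binom{-n}{k} = (-1)^k \binom{n+k-1}{k}$, which reduces to the previous case. Linear independence over $\mathbb{Z}$ is then immediate: since $\binom{x}{k}$ has degree exactly $k$ with nonzero leading coefficient $1/k!$, the family $\{\binom{x}{k}\}_{k \geq 0}$ is a $\mathbb{Q}$-basis of $\mathbb{Q}[x]$, and in particular a $\mathbb{Z}$-linearly independent set.

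The main step is to show that every $f \in \mathcal{R}$ is a $\mathbb{Z}$-linear combination of these. Since $\{\binom{x}{k}\}$ is a $\mathbb{Q}$-basis of $\mathbb{Q}[x]$, we may write $f = \sum_{k=0}^{d} a_k \binom{x}{k}$ uniquely with $a_k \in \mathbb{Q}$, and the goal is to show each $a_k \in \mathbb{Z}$. The cleanest way is to compute the coefficients explicitly via finite differences. One verifies that $\Delta \binom{x}{k} = \binom{x}{k-1}$, so iterating gives $\Delta^j \binom{x}{k} = \binom{x}{k-j}$, and evaluating at $x = 0$ yields $\Delta^j f(0) = a_j$. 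On the other hand, expanding the iterated difference directly gives the formula
\[
a_j \;=\; \Delta^j f(0) \;=\; \sum_{i=0}^{j} (-1)^{j-i} \binom{j}{i} f(i),
\]
which is manifestly an integer because each $f(i)$ is an integer by hypothesis. Hence every $a_k$ lies in $\mathbb{Z}$, completing the proof.

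I do not anticipate any serious obstacle: the entire argument hinges on recognizing that $\Delta$ acts as a "shift down" on the binomial basis and on the explicit Newton-style formula for the coefficients. The only mild subtlety is being careful that uniqueness of the $\mathbb{Q}$-expansion lets one identify $a_j$ with $\Delta^j f(0)$ before knowing the $a_j$ are integers.
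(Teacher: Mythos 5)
Your proof is correct and complete: the verification that $\binom{x}{k}\in\mathcal{R}$ (including negative arguments via $\binom{-n}{k}=(-1)^k\binom{n+k-1}{k}$), linear independence from the degrees, and integrality of the coefficients via $a_j=\Delta^j f(0)=\sum_{i=0}^{j}(-1)^{j-i}\binom{j}{i}f(i)$ is exactly the standard Newton forward-difference argument. There is nothing in the paper to compare it against, since the paper states Pólya's result as known and leaves the proof to the cited reference; your argument is the classical one (an equivalent variant inducts on degree using that $\Delta f\in\mathcal{R}$ and $\deg\Delta f<\deg f$, but your explicit coefficient formula settles it just as cleanly, and as a bonus shows integrality of $f$ on $\{0,1,\dots,\deg f\}$ already suffices).
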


Of particular interest to us will be the reductions of integer valued polynomials modulo a prime $p$. For binomial coefficients we have the celebrated theorem of Lucas:

\begin{proposition}{\textbf{(Lucas 1878 \cite{Lucas})}}\label{lucas}
Let $n = n_0 + n_1p+n_2p^2+ \dots + n_lp^l$ and $m = m_0 + m_1p+m_2p^2+ \dots + m_lp^l$  be the base $p$ expansions for positive integers $n$ and $k$ (possibly with extra zero digits added to make them the same length.) Then:

$$ {n \choose m} \equiv {n_0 \choose m_0} {n_1 \choose m_1} {n_2 \choose m_2} \dots {n_l \choose m_l}  \mod p$$

\end{proposition}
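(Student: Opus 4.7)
The plan is to use the ``freshman's dream'' in characteristic $p$ to split $(1+x)^n$ along the base $p$ expansion of $n$, and then extract the coefficient of $x^m$.

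First I would work in $\mathbb{F}_p[x]$, where the Frobenius endomorphism gives $(1+x)^p = 1+x^p$, and more generally $(1+x)^{p^i} = 1+x^{p^i}$ for every $i \geq 0$. Using $n = n_0 + n_1 p + \cdots + n_\ell p^\ell$, I would then write
$$(1+x)^n = \prod_{i=0}^{\ell} \left((1+x)^{p^i}\right)^{n_i} \equiv \prod_{i=0}^{\ell} \left(1+x^{p^i}\right)^{n_i} \pmod{p}.$$

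Next, since each $n_i$ satisfies $0 \le n_i < p$, the usual binomial theorem expands each factor as $(1+x^{p^i})^{n_i} = \sum_{k_i=0}^{n_i} \binom{n_i}{k_i} x^{k_i p^i}$. Multiplying these out gives
$$(1+x)^n \equiv \sum_{(k_0,\ldots,k_\ell)} \binom{n_0}{k_0}\binom{n_1}{k_1}\cdots \binom{n_\ell}{k_\ell}\, x^{k_0 + k_1 p + \cdots + k_\ell p^\ell} \pmod{p},$$
where each $k_i$ ranges over $0 \le k_i \le n_i < p$.

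Finally, I would extract the coefficient of $x^m$. The constraints $0 \le k_i < p$ mean that the exponent $k_0 + k_1 p + \cdots + k_\ell p^\ell$ is itself a base $p$ expansion, so by uniqueness of base $p$ expansions the only tuple contributing to $x^m$ is $(k_0,\ldots,k_\ell)=(m_0,\ldots,m_\ell)$. Comparing with the standard expansion $(1+x)^n = \sum_m \binom{n}{m} x^m$ yields the desired congruence. There isn't really a hard step here; the only thing to be careful about is justifying the uniqueness of the exponent decomposition, which is exactly the fact that the $n_i$ (and hence the $k_i$) are all less than $p$.
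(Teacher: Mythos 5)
Your argument is correct: this is the standard generating-function proof of Lucas's theorem, using the Frobenius identity $(1+x)^{p^i} \equiv 1+x^{p^i} \pmod{p}$ in $\mathbb{F}_p[x]$ and comparing coefficients of $x^m$, with the uniqueness of base $p$ expansions (valid since each $k_i < p$) pinning down the contributing tuple. The paper itself gives no proof at all, citing Lucas 1878, so there is nothing to compare against; your write-up is a complete and standard justification. One small remark worth keeping in mind: when some $m_i > n_i$, no tuple $(k_0,\dots,k_\ell)$ contributes, so the coefficient of $x^m$ is $0 \bmod p$, which matches the right-hand side since $\binom{n_i}{m_i}=0$ --- your argument handles this case automatically, but it is worth saying explicitly.
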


For an integer polynomial $f$ if we let $f_p: \mathbb{Z} \rightarrow \mathbb{F}_p$ be the reduction of $f$ modulo $p$ we have the following lesser known but useful corollary:

\begin{corollary} \label{period}
$f_p(n)$ is periodic in $n$ with period dividing $p^{\lceil log_p(deg(f)) \rceil} $
\end{corollary}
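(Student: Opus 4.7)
The plan is to combine Pólya's integral basis (Proposition \ref{binom}) with Lucas's theorem (Proposition \ref{lucas}) in the most direct way. First, using Pólya, write
\[
f(x) = \sum_{k=0}^{d} a_k \binom{x}{k}, \qquad a_k \in \mathbb{Z},\ d = \deg(f).
\]
Reducing the integer coefficients modulo $p$ yields
\[
f_p(n) \equiv \sum_{k=0}^{d} \bar{a}_k \binom{n}{k} \pmod{p},
\]
so it suffices to show that for each fixed $k$ with $0 \le k \le d$, the function $n \mapsto \binom{n}{k} \bmod p$ is periodic with period dividing $p^\ell$, where $\ell = \lceil \log_p(\deg f) \rceil$; a common period then works for the finite sum.

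Next, for the single-binomial step, let $k = k_0 + k_1 p + \cdots + k_{\ell-1} p^{\ell-1}$ be the base-$p$ expansion of $k$, padded with zeros so that $k_i = 0$ for $i \ge \ell$ (which is valid because $k \le d$ forces $k$ to fit in $\ell$ base-$p$ digits, by the choice of $\ell$). For any $n$ with expansion $n = n_0 + n_1 p + n_2 p^2 + \cdots$, Lucas's theorem gives
\[
\binom{n}{k} \equiv \prod_{i \ge 0} \binom{n_i}{k_i} = \prod_{i=0}^{\ell-1} \binom{n_i}{k_i} \cdot \prod_{i \ge \ell} \binom{n_i}{0} = \prod_{i=0}^{\ell-1} \binom{n_i}{k_i} \pmod{p},
\]
and the right-hand side depends only on the low-order digits $n_0, \ldots, n_{\ell-1}$, i.e.\ only on the residue class of $n$ modulo $p^\ell$. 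Hence $\binom{n}{k} \bmod p$ is periodic in $n$ with period dividing $p^\ell$.

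Putting these two steps together gives the claim, since each of the finitely many terms $\bar{a}_k \binom{n}{k}$ appearing in $f_p(n)$ is individually periodic mod $p^\ell$, so their sum is as well. I do not expect any real obstacle here: both ingredients are already stated in the excerpt, and the argument is a direct assembly. The only bookkeeping point to double-check is that the exponent $\lceil \log_p(\deg f) \rceil$ indeed suffices to bound the number of base-$p$ digits of every $k \le \deg f$ that can occur with nonzero coefficient $\bar{a}_k$; if one wants a bound that is uniformly safe across all degrees (including the edge case where $\deg f$ is itself a power of $p$), the only adjustment is the standard one of taking the number of base-$p$ digits of $\deg f$, which costs at most an extra factor of $p$ in the period.
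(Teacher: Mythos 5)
Your proof is correct and follows essentially the same route as the paper: reduce to the binomial basis via P\'olya's theorem and then apply Lucas's theorem to see that $\binom{n}{k} \bmod p$ depends only on $n$ modulo $p^\ell$. Your closing remark about the edge case where $\deg(f)$ is exactly a power of $p$ (where one really wants the number of base-$p$ digits rather than $\lceil \log_p(\deg f)\rceil$) flags the same minor imprecision present in the paper's own statement, so no further adjustment is needed.
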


\noindent \textbf{Proof:} By the $\mathbb{Z}$-linearity of reduction modulo $p$ and Prososition \ref{binom} it is enough to check this when $f$ is a binomial coefficient polynomial.  For these, Lucas' theorem tells us that the reduction modulo $p$ of ${n \choose m}$ only depends on the last $\lceil log_p(m) \rceil$ digits of $n$ in base $p$, as desired. $\square$

\medskip

 For any $m \in \mathbb{N}$ the function $t \to {t \choose m}$ extends to a continuous function from $\mathbb{Z}_p$ to itself, which can then be reduced modulo $p$ to obtain a continuous function from $\mathbb{Z}_p$ to $\mathbb{F}_p$. Using Lucas' theorem can see that ${t \choose m}$ modulo $p$ only depends on the last $\lceil log_p(m) \rceil$ base $p$ digits of $t$.  In particular we have evaluation maps $ev_t: \mathcal{R} \rightarrow \mathbb{F}_p$, for every $t \in \mathbb{Z}_p$. 

We include the following proposition to help motivate why taking a $p$-adic limit might be a natural thing to do whenever this ring $\mathcal{R}$ appears.  It says roughly that the closed points of $Spec(\mathcal{R})$ with residue field of characteristic $p$ are indexed by the $p$-adic integers.

\begin{proposition}\label{specr}
Any homomorphism $\phi: \mathcal{R} \rightarrow k$ of rings from $\mathcal{R}$ into a field $k$ of characteristic $p$ factors as $ev_t$ for some $t \in \mathbb{Z}_p$, followed by the inclusion of $\mathbb{F}_p$ into $k$.

\end{proposition}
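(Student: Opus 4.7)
My plan is to identify $\mathcal{R}/p\mathcal{R}$ very concretely: it is generated as an $\mathbb{F}_p$-algebra by the reductions $d_i := \binom{x}{p^i} \bmod p$ for $i \geq 0$, and these generators satisfy the relations $d_i^p = d_i$. Once this is in hand, a homomorphism $\phi$ to a field of characteristic $p$ is pinned down by scalars in $\mathbb{F}_p$ that assemble into a $p$-adic integer.

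First I would reduce mod $p$: since $k$ has characteristic $p$, any $\phi$ kills $p \cdot 1$ and factors as $\bar\phi: \mathcal{R}/p\mathcal{R} \to k$. To reason inside $\mathcal{R}/p\mathcal{R}$ I want the natural map $\mathcal{R}/p\mathcal{R} \to \{\text{functions } \mathbb{Z} \to \mathbb{F}_p\}$ to be injective. This follows from finite differences: writing $f = \sum_m a_m \binom{x}{m}$ with $a_m \in \mathbb{Z}$, the coefficient $a_m = (\Delta^m f)(0)$ is a $\mathbb{Z}$-linear combination of $f(0), \ldots, f(m)$, so if $f$ takes values in $p\mathbb{Z}$ then each $a_m \in p\mathbb{Z}$, i.e.\ $f \in p\mathcal{R}$. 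Under this embedding, $d_i$ corresponds to an $\mathbb{F}_p$-valued function on $\mathbb{Z}$ (by Corollary~\ref{period}), so $d_i^p = d_i$ holds on the level of functions and hence in $\mathcal{R}/p\mathcal{R}$.

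The substantive step is the generation claim. Lucas' theorem (Proposition~\ref{lucas}) gives, for $m = m_0 + m_1 p + \cdots + m_\ell p^\ell$, the identity
\[
\binom{x}{m} \equiv \prod_{i=0}^{\ell} \binom{d_i}{m_i} \pmod p,
\]
when we interpret $d_i$ as the $i$-th base-$p$ digit function on $\mathbb{Z}$; by the injectivity step this is a genuine identity in $\mathcal{R}/p\mathcal{R}$. Since $m_i < p$, each factor $\binom{d_i}{m_i} = d_i(d_i-1)\cdots(d_i - m_i + 1)/m_i!$ is an honest polynomial in $d_i$ with coefficients in $\mathbb{F}_p$. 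So every $\binom{x}{m} \bmod p$ lies in the $\mathbb{F}_p$-subalgebra generated by the $d_i$; combined with Proposition~\ref{binom} (which says the $\binom{x}{m}$ span $\mathcal{R}$ over $\mathbb{Z}$), the $d_i$ generate $\mathcal{R}/p\mathcal{R}$ as an $\mathbb{F}_p$-algebra.

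For the conclusion: from $d_i^p = d_i$ in $\mathcal{R}/p\mathcal{R}$ we get $\phi(d_i)^p = \phi(d_i)$ in the field $k$, so $\phi(d_i)$ lies in $\mathbb{F}_p \subset k$. Write $t_i \in \{0, \ldots, p-1\}$ for this value and set $t := \sum_{i \geq 0} t_i p^i \in \mathbb{Z}_p$. By Lucas, $ev_t(d_i) = \binom{t}{p^i} \bmod p = t_i = \phi(d_i)$, so $ev_t$ and $\phi$ agree on a generating set of $\mathcal{R}/p\mathcal{R}$ and hence everywhere on $\mathcal{R}$. The hard part is the generation claim; everything else is bookkeeping once Lucas has been promoted from an identity of values to one in $\mathcal{R}/p\mathcal{R}$ via the finite-differences injectivity step.
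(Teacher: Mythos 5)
Your proof is correct and follows essentially the same route as the paper: Lucas' theorem shows that the image of every $\binom{x}{m}$ is determined by the images of the $\binom{x}{p^i}$, the relation $d_i^p=d_i$ (the paper's observation that $\frac{f^p-f}{p}\in\mathcal{R}$, applied to generators rather than to all $f$) forces those images into $\mathbb{F}_p$, and reading them off as base-$p$ digits of a $p$-adic integer $t$ identifies $\phi$ with $ev_t$. Your finite-differences injectivity lemma is fine but more than is needed: if $f\in\mathcal{R}$ takes values in $p\mathbb{Z}$ then $f/p$ is already an integer-valued polynomial, so $f\in p\mathcal{R}$ directly, which is how the paper implicitly promotes the Lucas congruence to an identity modulo $p\mathcal{R}$.
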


\noindent \textbf{Proof:} First let's consider maps $\phi$ to $\mathbb{F}_p$. Let $m = m_0 + m_1p + \dots + m_lp^l$ be the base $p$ expansion of a positive integer $m$.  Lucas's theorem tells us that the polynomial

$$ F(x) =  \frac{{x \choose m} - {x \choose m_0} {{x \choose p} \choose m_1} {{x \choose p^2} \choose m_2} \dots {{x \choose p^l} \choose m_l}}{p}$$ 

\noindent is integer valued, and hence $pF(x)$ gets sent to zero under $\phi$.  This implies that the images of ${x \choose m}$ for all $m$ are determined by the images of $x, {x \choose p}, {x \choose p^2}, \dots$  We may then interpret these values as the base $p$ digits of some $t \in \mathbb{Z}_p$ and conclude that $\phi = ev_t$ since they agree on a basis for $\mathcal{R}$.

To see any map into an arbitrary field of characteristic $p$ must factor through this note that for any $f(x) \in \mathcal{R}$ the image under $\phi$ of $f(x)^p - f(x)$ in $k$ is $p$ times the image of $\frac{f(x)^p-f(x)}{p} \in \mathcal{R}$. Therefore the image of $f(x)$ is fixed by the Frobenius map and hence is in $\mathbb{F}_p$.  $\square$

\end{subsection}

\begin{subsection}{The stable order on partitions}

Here's an observation that everyone working in representation stability implicitly seems to know, but as far as we can tell does not explicitly appear written down anywhere:

\begin{proposition}
If $n, m > 2r$ the correspondence $\lambda(n) \rightarrow \lambda(m)$ for $|\lambda| \le r$ respects the dominance order and is an equivalence of partially ordered sets.
\end{proposition}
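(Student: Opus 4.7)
The plan is to show that the dominance condition $\lambda(n) \trianglelefteq \mu(n)$ is equivalent to an $n$-independent condition on the partitions $\lambda$ and $\mu$, from which the result is immediate.

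First I would verify the hypothesis ensures $\lambda(n)$ is an honest partition: since $|\lambda| \le r$ forces $\lambda_1 \le r$, and $n > 2r$ gives $n - |\lambda| \ge n - r > r \ge \lambda_1$, so the long first row really is the largest part. Next, I would just write down the partial sums. For $j \ge 1$,
\[
\lambda(n)_1 + \lambda(n)_2 + \dots + \lambda(n)_j \;=\; (n - |\lambda|) + (\lambda_1 + \lambda_2 + \dots + \lambda_{j-1}),
\]
with the convention that $\lambda_i = 0$ once $i$ exceeds the length of $\lambda$ (in which case the partial sum equals $n$).

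Then $\lambda(n) \trianglelefteq \mu(n)$ unfolds, after subtracting $n$ from both sides of each of the defining inequalities, into
\[
\sum_{i \ge j} \mu_i \;\le\; \sum_{i \ge j} \lambda_i \qquad \text{for all } j \ge 1,
\]
where for $j = 1$ this is $|\mu| \le |\lambda|$. Crucially, this is a finite list of inequalities involving only $\lambda$ and $\mu$, with no appearance of $n$. So the condition holds for the pair $(\lambda(n), \mu(n))$ if and only if it holds for $(\lambda(m), \mu(m))$, proving that $\lambda \mapsto \lambda(n)$ intertwines the dominance orders on the two ranges. Bijectivity of the correspondence $\lambda(n) \leftrightarrow \lambda(m)$ as a map of sets is clear since each is indexed by the finite set of partitions $\lambda$ with $|\lambda| \le r$, so this is an isomorphism of posets.

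There is no real obstacle here; the only thing one must be careful about is the edge cases in the index $j$ (once $j$ exceeds the length of $\lambda$ the tail sum is zero, which fits into the above uniformly). The condition $n > 2r$ exists solely to make all the $\lambda(n)$ simultaneously well-defined partitions.
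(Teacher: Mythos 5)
Your proof is correct: the tail-sum reformulation $\sum_{i\ge j}\mu_i \le \sum_{i\ge j}\lambda_i$ is exactly the $n$-independent condition that makes the claim immediate, and your use of $n,m>2r$ to guarantee $\lambda(n)$ is a genuine partition is the right (and only) place the hypothesis enters. The paper simply declares this proposition ``Obvious'' and gives no argument, so you have just written out the standard verification it had in mind.
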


\noindent \textbf{Proof:} Obvious. $\square$

\medskip

The point of stating this explicitly is that it allows us to define the \emph{stable order} on the set of all partitions by saying that $\lambda \preceq \mu$ iff $\lambda(n) \ge \mu(n)$ in the dominance order for $n \gg 0$.

Note that there is a change of direction in the inequalities. Of course this is just a convention, but for many purposes the reverse dominance order is the more natural one when working with symmetric groups. Here are a few reasons why we go with this convention: 

\begin{itemize}
\item The cellular structures on the group algebras of symmetric groups are defined with respect to the reverse dominance order.

\item It is compatible with the inductive description of the representation theory of symmetric groups in section \ref{modreview}.

\item The stable order is an extension of the containment ordering on partitions, and in particular the empty partition is the unique minimal element.

\end{itemize}

 Many of the theorems we state will involve looking at the set of partitions $\lambda$ of size at most $r$.  This is convenient for stating bounds on how large various parameters need to be, but the only property about the set that we need for many of these theorems to go through is that it is a poset ideal in the stable order.

\end{subsection}
\end{section}

\begin{section}{Stability and periodicity}

This section will contain the main results of this paper.   We first restrict our attention to permutation modules where everything can be understood combinatorially, then we extend our to more general representations using Schur-Weyl duality and Ringel duality.  

As some motivation for why we proceed this way we note that permutation modules $M(\lambda)$ and direct sums thereof are a particularly nice class of representations. In particular they are:

\begin{enumerate}
\item Defined over $\mathbb{Z}$.

\item Self dual.

\item Appropriately preserved under induction and restriction to Young subgroups.

\item Closed under taking tensor products.
\end{enumerate}

Moreover, Theorem \ref{modreview} suggests that these permutation modules ``see" a lot of the representation theory of the symmetric groups in characteristic $p$.  So we will begin by understanding stability and periodicity for these modules.

\begin{subsection}{Stability and polynomiality over $\mathbb{Z}$}

For a partition $\mu = (\mu_1, \mu_2, \dots, \mu_\ell)$ of $n$ we may realize $M(\mu)$ as the linearization of the natural action of $S_n$ on the collection of set-partitions $[n] = A_1 \cup A_2 \cup \dots \cup A_\ell$ with $|A_i|=\mu_i$.

A tabloid of shape $\mu$ and type $\lambda$ is a labeling of the boxes of a Young diagram of shape $\mu$ with $\lambda_1$ boxes labeled $1$,  $\lambda_2$ boxes labeled $2$, etc. considered up to equivalence of permuting the entries in each row.   For a tabloid $\tau$ we will let $\tau_{i,j}$ denote the number of boxes in row $i$ labeled by $j$.

Tabloids of shape $\mu$ and type $\lambda$ index the double cosets of $S_n$ by the Young subgroups of shape $\mu$ and $\lambda$. They can be thought of as keeping track of the relative positions a pair of set-partitions of an $n$ element set, one of shape $\mu$ and one of shape $\lambda$. There is an obvious bijection between tabloids and row-semistandard tableaux of the same shape and type, and some authors prefer that convention.  We will stick to working with tabloids.

For a tabloid $\tau$ of shape $\mu$ and type $\lambda$ we may define a homomorphism $h^\tau: M(\mu) \rightarrow M(\lambda)$ by the formula:

$$h^\tau(\langle A_1 \cup A_2 \cup \dots \cup A_\ell \rangle) = \sum_{|A_i \cap B_j |= \tau_{i,j}} \langle B_1 \cup B_2 \cup \dots \cup B_{\ell'} \rangle$$

\noindent which we will refer to as a \emph{Carter-Lusztig basis map}, as they were (essentially) defined in Carter and Lusztig's seminal paper \cite{CL} on modular representation theory.  In words, this map just sends a set-partition $A$ of shape $\mu$ to a formal sum of all set-partitions $B$ of shape $\lambda$ which are in the relative position to $A$ indexed by $\tau$. We have the following theorem:

\begin{theorem} \textbf{(James \cite{James} Theorem 13.19, after \cite{CL})} 
Let $R$ be an arbitrary commutative ring. $\text{Hom}_{{R}[S_n]}(M(\mu), M(\lambda))$ is a free $R$ module generated by the maps $h^\tau$.
 
 
\end{theorem}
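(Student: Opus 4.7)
The plan is to reduce the computation to counting $S_\mu$-orbits on set-partitions of shape $\lambda$, via Frobenius reciprocity. I would begin by fixing a reference set-partition $A = A_1 \cup \cdots \cup A_\ell$ of shape $\mu$, so that its stabilizer in $S_n$ is the Young subgroup $S_\mu$. Since $M(\mu) \cong \text{Ind}_{S_\mu}^{S_n} R$, evaluation at $\langle A \rangle$ gives a natural isomorphism of $R$-modules
$$\text{Hom}_{R[S_n]}(M(\mu), M(\lambda)) \;\xrightarrow{\sim}\; M(\lambda)^{S_\mu}, \qquad h \mapsto h(\langle A \rangle).$$
This reduction is standard and holds for an arbitrary commutative ring $R$ because it is an instance of the tensor-hom adjunction applied to a module induced from a subgroup.

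Next, I would analyze $M(\lambda)^{S_\mu}$ combinatorially. The group $S_\mu$ acts on the basis of $M(\lambda)$ given by set-partitions $B = B_1 \cup \cdots \cup B_{\ell'}$ of shape $\lambda$ by permutation of basis elements, so the invariant submodule is a free $R$-module on the sums over the $S_\mu$-orbits. It therefore remains to identify the orbits. Two set-partitions $B$ and $B'$ of shape $\lambda$ lie in the same $S_\mu$-orbit if and only if the intersection matrices $(|A_i \cap B_j|)$ and $(|A_i \cap B'_j|)$ coincide --- this is the usual translation of the double-coset description $S_\mu \backslash S_n / S_\lambda$. These intersection matrices are precisely tabloids $\tau$ of shape $\mu$ and type $\lambda$ (with $\tau_{i,j} = |A_i \cap B_j|$), since the row sums are $\mu_i = |A_i|$ and the total number of entries labeled $j$ is $\sum_i |A_i \cap B_j| = \lambda_j$.

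Finally, unwinding the definitions I would verify that under the Frobenius isomorphism, the orbit sum associated to $\tau$ maps to the homomorphism $h^\tau$, since by construction $h^\tau(\langle A \rangle)$ is exactly the sum of all set-partitions $B$ of shape $\lambda$ with $|A_i \cap B_j| = \tau_{i,j}$. This gives the claimed freeness. The main subtlety --- such as it is --- is checking the orbit-tabloid bijection carefully; the passage to an arbitrary commutative ring $R$ causes no trouble, precisely because $S_\mu$ acts by basis permutation, so $M(\lambda)^{S_\mu}$ is a free $R$-module with orbit-sum basis regardless of $R$.
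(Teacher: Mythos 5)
Your argument is correct and is essentially the standard proof of this result, which the paper does not reprove but simply quotes from James (Theorem 13.19): Frobenius reciprocity identifies $\text{Hom}_{R[S_n]}(M(\mu),M(\lambda))$ with $M(\lambda)^{S_\mu}$, whose orbit-sum basis is indexed by the intersection matrices, i.e.\ tabloids of shape $\mu$ and type $\lambda$, and these orbit sums correspond exactly to the maps $h^\tau$. The reduction works over any commutative ring precisely for the reason you give, so there is nothing to add.
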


\noindent \textbf{Remark:}  Our description of the modules $M(\lambda)$ and the maps $h^\tau$ is somewhat different from the descriptions in \cite{CL} or \cite{James}, but nevertheless can be seen to be equivalent. We feel this description makes some of the combinatorics more transparent.

\medskip

Now we want to pass to the stable setting, so let us consider tabloids of shape $\mu(n)$ and type $\lambda(n)$.  Such a Young tabloid is completely determined by its restriction to the rows after the first one.  Moreover, the collection of possible restrictions is independent of $n$ for $n > 2\text{max}\{|\mu|,|\lambda|\}$. We will call this the collection of \emph{stable tabloids} of shape $\mu$ and type $\lambda$.

We should note that stable tabloids of shape $\mu$ and type $\lambda$ are not actually a subset of tabloids of shape $\mu$ and type $\lambda$.  Rather, they can be described combinatorially as a labeling of the boxes of a Young diagram of shape $\mu$ with at most $\lambda_1$ boxes labeled $2$,  at most $\lambda_2$ boxes labeled $3$, etc., and an arbitrary number of boxes labeled $1$,  again considered only up to permuting the rows.

\medskip
This allows us to identify the corresponding hom-spaces for different values of $n$, provided $n$ is large enough. Let $\overline{ \text{Hom}}(M(\mu), M(\lambda))$ be the stable limit of the spaces $\text{Hom}(M(\mu(n)), M(\lambda(n)))$ where we identify maps in different $n$ corresponding to tabloids that agree outside of the first rows. We will refer to this as a \emph{stable hom-space}, as an abelian group it is just the free abelian group generated by the stable tabloids of shape $\mu$ and type $\lambda$.

If $\tau$ is a stable tabloid of $\mu$ obtained by ignoring the first row of a tabloid $\tau(n)$ of $\mu(n)$ for $n\gg0$ we'll let $f^\tau$ denote the corresponding element of $\overline{ \text{Hom}}(M(\mu), M(\lambda))$.  For $f$ in one of these stable hom-spaces we let $f_n$ denote the corresponding map for the symmetric group $S_n$ (in particular $f^{\tau}_n = h^{\tau(n)}$).
  
\medskip

\noindent \textbf{Remark:} We may similarly define stable hom-spaces from Specht modules to permutation modules (provided we aren't in characteristic two), as well as stable hom-spaces between finite direct sums of these types of permutation modules. 

\medskip

Of course we don't just care about the hom-spaces between permutation modules, we also need to understand the composition maps. For the following two results we will state them for maps between permutation modules, but they also hold (with essentially the same proof) for finite direct sums of permutation modules as well as if one of the maps is from a Specht module into a permutation module.
 
\begin{proposition}\label{polycomp} For stable maps $f^\alpha, f^\beta$ in the stable Carter-Lusztig bases of $\overline{ \text{Hom}}(M(\mu), M(\lambda))$ and $\overline{\text{Hom}}(M(\lambda), M(\nu))$ respectively, for $n \gg 0$ we have that:

$$f^\beta_n \circ f^\alpha_n = \sum_\tau p_\tau(n) f^\tau_n$$

\noindent where the $p_\tau(n)$ are integer valued polynomials of degree at most $\text{max}\{|\mu|,|\lambda|, |\nu|\}$, and $\tau$ runs over the stable tabloids of shape $\mu$ and type $\nu$.

\end{proposition}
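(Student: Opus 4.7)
The plan is to expand the composition $f^\beta_n \circ f^\alpha_n$ combinatorially by tracking triple intersections, and then show that the resulting coefficients become polynomial in $n$ because only a single binomial factor depends on $n$.

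First I would fix a basis element $\langle A \rangle \in M(\mu(n))$, where $A$ is a set-partition of shape $\mu(n)$. Applying $h^{\alpha(n)}$ sums over all set-partitions $B$ of shape $\lambda(n)$ with $|A_i \cap B_j| = \alpha(n)_{i,j}$, and then $h^{\beta(n)}$ sums over all $C$ of shape $\nu(n)$ with $|B_j \cap C_k| = \beta(n)_{j,k}$. By $S_n$-equivariance the total coefficient of $\langle C \rangle$ depends only on the relative position tabloid $\gamma(n)$ of $(A,C)$, so that $f^\beta_n \circ f^\alpha_n = \sum_\gamma c_\gamma(n) f^\gamma_n$ where $c_\gamma(n)$ is the number of intermediate set-partitions $B$ given fixed $A,C$. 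This count is naturally stratified by the three-dimensional intersection tensor $\tau_{i,j,k} = |A_i \cap B_j \cap C_k|$, yielding
\[
c_\gamma(n) = \sum_\tau \prod_{i,k} \binom{\gamma(n)_{i,k}}{\tau_{i,1,k},\tau_{i,2,k},\ldots},
\]
where $\tau$ ranges over nonnegative integer tensors with layer sums $\gamma(n)_{i,k}$, row sums $\alpha(n)_{i,j}$, and column sums $\beta(n)_{j,k}$.

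Next I would pass to the stable picture. A straightforward check shows that the tensors $\tau$ appearing for $n \gg 0$ are exactly determined by their "bounded part" $\{\tau_{i,j,k} : i,j,k \ge 2\}$, with every other entry recovered by the row/column/layer constraints --- in particular $\tau_{i,j,1}$, $\tau_{i,1,k}$, $\tau_{1,j,k}$ are constants (independent of $n$) for indices $\geq 2$, while $\tau_{1,1,1} = n - c_1$ is linear in $n$ for a constant $c_1$ determined by $\alpha,\beta,\gamma$. Since the bounded part takes only finitely many values, the sum is finite.

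The final step is to evaluate each multinomial factor. For $(i,k) \neq (1,1)$ every entry in the multinomial is a constant, so that factor is independent of $n$. Only the $(i,k)=(1,1)$ factor involves $n$: writing $\gamma(n)_{1,1} = n - c_0$ and $\tau_{1,1,1} = n - c_1$ with the $\tau_{1,j,1}$ for $j \ge 2$ bounded constants, it reduces to
\[
\binom{n-c_0}{n-c_1,\tau_{1,2,1},\tau_{1,3,1},\ldots} = \binom{n-c_0}{c_1-c_0}\cdot \frac{(c_1-c_0)!}{\prod_{j\ge 2}\tau_{1,j,1}!},
\]
which is an integer-valued polynomial in $n$ of degree $c_1-c_0=\sum_{j\ge 2}\tau_{1,j,1}$. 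Since $\tau_{1,j,1}\le \beta_{j,1}\le \lambda_{j-1}$ for each $j\ge 2$, this degree is bounded by $|\lambda|$, so summing over the finitely many stable tensors $\tau$ yields an integer-valued polynomial $p_\gamma(n)$ of degree at most $|\lambda|\le \max\{|\mu|,|\lambda|,|\nu|\}$. The main subtlety is just the bookkeeping of which entries of the three-dimensional tensor are forced to be constants in the stable range and which absorb the scaling in $n$; once that is set up correctly, the polynomiality and the degree bound are immediate.
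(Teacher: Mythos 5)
Your proof is correct and takes essentially the same route as the paper's: the paper also expands the composition combinatorially in the Carter--Lusztig basis and observes that the coefficients are finite sums of products of binomial coefficients in tabloid entries, each of which is either a constant or of the form $n-r$. You simply carry out explicitly, via the triple-intersection tensor, the computation the paper delegates to a citation, and your resulting degree bound $|\lambda|$ is even slightly sharper than the stated $\max\{|\mu|,|\lambda|,|\nu|\}$.
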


\begin{proof}
Computations of compositions in the (ordinary, non-stable) Carter-Lusztig bases can be computed explicitly and combinatorially in terms of the corresponding tabloids (For example, see \cite{FM} for explicit examples of such calculations). In particular the coefficients are finite sums and products of binomial coefficients in the variables $\tau_{i,j}$ for the different tabloids $\tau$ involved.  For the families of maps we look at these variables are all either a constant independent of $n$ or of the form $n-r$ for some fixed $r$. The result follows immediately. $\square$
\end{proof}

We will also be interested in restrictions of permutation modules to Young subgroups, tensor products of permutation modules, and taking duals. We have the following:

\begin{proposition}\label{indep}
The following are independent of $n$ for $n \gg 0$:

\begin{enumerate}

\item The decomposition of the restriction of $M(\lambda(n))$ to a Young subgroup $S_\ell \times S_{n-\ell}$ into modules of the form $M(\mu)\otimes M(\nu(n-\ell))$ for fixed $\ell$, as well as an analogous statement for induction the other direction.

\item In the setting of the previous part, the decomposition of a Carter-Lusztig basis map $h^{\tau(n)}$ into a direct sum of products $h^{\alpha} \otimes h^{\beta(n)}$, and an analogous statement for induction.

\item The decomposition of a tensor product $M(\lambda(n)) \otimes M(\mu(n))$ into permutation modules $M(\nu(n))$.

\item The decomposition of products of Carter-Lusztig basis maps $h^{\tau(n)} \otimes h^{\alpha(n)}$ as a direct sum of maps $h^{\beta(n)}$.

\item The map on stable hom-spaces $\overline{ \text{Hom}}(M(\mu), M(\lambda)) \rightarrow \overline{ \text{Hom}}(M(\lambda), M(\mu))$ given by identifying them with the corresponding hom-spaces for $M(\mu(n))$ and $M(\lambda(n))$ and applying duality.

\end{enumerate}

\end{proposition}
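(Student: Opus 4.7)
The plan is to reduce every clause of the proposition to explicit combinatorics of tabloids or double cosets, and then observe that once the first rows of $\lambda(n)$, $\mu(n)$, and $\nu(n)$ are long enough to absorb all bounded corrections, every piece of indexing data lives in a fixed finite set. This is the same stabilization mechanism exploited in Proposition \ref{polycomp}; the only work is organizing the Mackey bookkeeping so that it is visible.

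For (1) and (2), I would invoke Mackey's formula:
\[ \operatorname{Res}^{S_n}_{S_\ell \times S_{n-\ell}} M(\lambda(n)) = \bigoplus_g \operatorname{Ind}^{S_\ell \times S_{n-\ell}}_{(S_\ell \times S_{n-\ell}) \cap g S_{\lambda(n)} g^{-1}} \mathbf{1}, \]
where $g$ ranges over $(S_\ell \times S_{n-\ell})\backslash S_n / S_{\lambda(n)}$. These double cosets are in bijection with two-column tabloids of shape $\lambda(n)$ whose columns sum to $\ell$ and $n-\ell$. Since $\ell$ is fixed, for $n$ sufficiently large the first row of $\lambda(n)$ is so long that its split is forced by the row sum constraint once the splits of rows $2,3,\ldots$ are specified, so the indexing set stabilizes. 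The resulting summand is itself a permutation module $M(\alpha) \boxtimes M(\beta(n-\ell))$ with $(\alpha, \beta)$ depending only on the stable (non-first-row) data. The induction analogue is the Frobenius adjoint statement. For (2) one notes that a Carter-Lusztig basis map $h^{\tau(n)}$ acts block-diagonally on the Mackey decomposition, and its restriction to each block is itself a basis map obtained by a combinatorial refinement of $\tau(n)$ that only involves non-first-row entries.

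For (3) and (4), realize $M(\lambda(n)) \otimes M(\mu(n))$ as the permutation module on ordered pairs of set-partitions of $[n]$. Its $S_n$-orbits are indexed by tabloids $\tau$ of shape $\lambda(n)$ and type $\mu(n)$, with stabilizer $\prod_{i,j} S_{\tau_{i,j}}$, so the orbit summand is the permutation module $M(\nu)$ where the parts of $\nu$ are the $\tau_{i,j}$. The unique entry that grows with $n$ is $\tau_{1,1}$, which is forced to be $\sim n$ by the row/column sum constraints, so $\nu = \rho(n)$ for some $\rho$ determined by the other (bounded) entries of $\tau$. Since stable tabloids of shape $\lambda$ and type $\mu$ form a finite set independent of $n$, the decomposition and the shapes $\rho$ stabilize. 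Part (4) then follows by applying the same bookkeeping to products of basis maps, whose decomposition into basis maps of the summands is again a finite combinatorial refinement on non-first-row data.

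For (5), each $M(\lambda(n))$ is canonically self-dual via its basis of set-partitions, and I claim that under this self-duality the transpose of $h^{\tau(n)}$ is precisely $h^{\tau(n)^T}$, where $\tau^T$ denotes the tabloid with shape and type swapped (i.e. the transpose of the matrix $(\tau_{i,j})$). This is immediate from the combinatorial description of $h^\tau$: it sends a set-partition $A$ to the sum over set-partitions $B$ in relative position $\tau$ to $A$, which is the same relation as $B$ being in relative position $\tau^T$ to $A$ when shape and type are swapped. This operation manifestly descends to stable tabloids. The main obstacle in the whole argument is a notational one in parts (1)-(4): one must keep careful track of which entries of the relevant tabloids are genuinely free parameters and which are forced by row or column sum constraints, so that the stable data is visibly a finite combinatorial object independent of $n$.
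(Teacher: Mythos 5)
Your proposal is correct and takes essentially the same route as the paper: everything is computed in the explicit combinatorial model (Mackey decomposition for restriction, orbit analysis of pairs of set-partitions for tensor products, the transposed tabloid for duality), and stabilization follows because the first-row entries of all tabloids involved are forced by the row/column sum constraints, so the indexing data is the fixed finite set of stable tabloids --- indeed the paper writes out only part (1) this way and declares (2)--(5) analogous, so your sketches supply detail it omits. The only slips are cosmetic: in (2) the restricted map is not literally block-diagonal (each matrix entry between Mackey summands is a sum of product maps $h^{\alpha}\otimes h^{\beta}$ indexed by splittings of $\tau$ into an $[\ell]$-part and a complementary part), and for induction the clean argument is transitivity of induction, giving $\mathrm{Ind}(M(\mu)\boxtimes M(\lambda)) = M(\mu\cup\lambda)$ directly, rather than an appeal to Frobenius reciprocity; neither affects the stabilization argument.
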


 These restrictions, tensor products, and dualities can all be computed explicitly in terms of the combinatorial descriptions of permutation modules and the Carter-Lusztig basis maps we gave at the beginning of this section. One then just needs to verify that the combinatorics involved stabilizes in $n$. Since all of these proofs are of a similar flavor (and no different from the characteristic zero case), we will give a proof for part 1 and omit the rest.

\medskip

\noindent \textbf{Proof of (1):} The restriction of a permutation module $M(\lambda)$ to a Young subgroup $S_\ell \times S_{n-\ell}$ can be computed by Mackey's decomposition theorem.  This amounts to summing over intersections of the subgroups $g(S_\ell \times S_{n-\ell})g^{-1}$ with the Young subgroup corresponding to $\lambda$ as $g$ runs over a choice of representatives for the double cosets. Unraveling this we get:

$$Res(M(\lambda)) = \bigoplus_{\substack{{\tilde{\mu}+\tilde{\nu} = \lambda} \\ {|\mu|=\ell}}} M(\mu)\otimes M(\nu)$$

Where $\tilde{\mu}$ and $\tilde{\nu}$ run over compositions rather than partitions, meaning they are allowed to have empty rows and not be ordered by size with addition defined by $(\tilde{\mu}_1, \tilde{\mu}_2, \dots, \tilde{\mu}_\ell) + (\tilde{\nu}_1, \tilde{\nu}_2, \dots, \tilde{\nu}_\ell) = (\tilde{\mu}_1+\tilde{\nu}_1, \tilde{\mu}_2+\tilde{\nu}_2, \dots, \tilde{\mu}_\ell+\tilde{\nu}_\ell)$, and $\mu, \nu$ denote the partitions we get by ignoring empty rows and rearranging them in decreasing order. 

If we look at this decomposition of $Res(M(\lambda(n)))$ for $\ell$ fixed and $n$ growing, the possibilities for $\tilde{\mu}$ stabilize as soon as  $n -  |\lambda| > \ell$ and the complementary compositions $\tilde{\nu}(n)$ always get reordered into the same (up to the first row) partitions $\nu(n)$ as soon as $n -  |\lambda| > \ell + |\lambda|$.

For induction the story is even simpler as permutation modules are just the trivial module induced from a Young subgroup. It follows that:

$$\text{Ind}(M(\mu)\boxtimes M(\lambda)) = M(\mu \cup \lambda)$$

Where $\mu \cup \lambda$ denotes the partition we get by shuffling together the parts of $\mu$  with the parts of $\lambda$ and reordering them in decreasing order. In particular this stabilizes as $\mu \cup \lambda(n) = (\mu \cup \lambda)(n)$ for $n > 2|\lambda| + |\mu|$.  $\square$

\medskip

\noindent \textbf{Remark:} In some sense, for large $n$ the only dependence on $n$ for permutation modules of the form $M(\lambda(n))$ is in the composition law.  Those familiar with Deligne's categories $\underline{\text{Rep}}(S_t)$, in particular the decription by Comes and Ostrik in \cite{CO}, will find this extremely reminiscent of what happens there. See section \ref{prank} for an explicit connection.

\end{subsection}

\begin{subsection}{Periodicity in positive characteristic}

Now let's see what this tells us about modular representations by tensoring everything with an algebraically closed field $k$ of characteristic $p$.

\begin{proposition}
\label{comp}

For $n \gg 0$ the $k$-linear composition map$$\circ_n : \overline{ \text{Hom}}(M(\mu), M(\lambda)) \otimes \overline{\text{Hom}}(M(\lambda), M(\nu))  \rightarrow \overline{ \text{Hom}}(M(\mu), M(\lambda))$$

\noindent obtained by identifying the stable hom-spaces with the genuine hom-spaces for $S_n$ only depends on $n$ modulo $p^m$ for $m = \lceil log_p(\text{max}\{|\mu|, |\lambda|, |\nu|\}) \rceil$.

\end{proposition}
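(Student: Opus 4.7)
The plan is to combine Proposition \ref{polycomp} with Corollary \ref{period}; this is a very short reduction, so there is not really a ``main obstacle'' beyond assembling those ingredients correctly. First I would note that $\circ_n$ is $k$-bilinear and that the stable Hom-spaces have the Carter-Lusztig basis $\{f^\alpha\}$, $\{f^\beta\}$. Hence it suffices to verify the claim on pairs of basis elements, i.e.\ to show that for each pair $(\alpha,\beta)$ the map $n \mapsto f^\beta_n \circ f^\alpha_n \in \overline{\text{Hom}}(M(\mu),M(\nu))$ (after identification with the genuine Hom-space for $S_n$) only depends on $n \bmod p^m$ for $n$ sufficiently large.

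Next I would invoke Proposition \ref{polycomp}, which expresses the composition, for $n \gg 0$, as
$$f^\beta_n \circ f^\alpha_n = \sum_\tau p_\tau(n)\, f^\tau_n,$$
where $\tau$ runs over stable tabloids of shape $\mu$ and type $\nu$ (so the sum is finite and $n$-independent) and each $p_\tau \in \mathcal{R}$ is integer-valued of degree at most $d := \max\{|\mu|,|\lambda|,|\nu|\}$. Under the stable identifications $f^\tau_n \leftrightarrow f^\tau$, the composition becomes $\sum_\tau p_\tau(n)\, f^\tau$ in $\overline{\text{Hom}}(M(\mu),M(\nu))$.

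Finally, since we have tensored with a field $k$ of characteristic $p$, the relevant coefficients are the reductions $\overline{p_\tau(n)} \in \mathbb{F}_p \subseteq k$. By Corollary \ref{period} each such reduction is periodic in $n$ with period dividing $p^{\lceil \log_p(\deg p_\tau) \rceil}$, hence dividing $p^{\lceil \log_p d \rceil} = p^m$. A finite sum of such periodic sequences still has period dividing $p^m$, so the whole composition map depends on $n$ only through its residue modulo $p^m$ (for $n$ past the threshold where Proposition \ref{polycomp} applies and the stable identifications become valid). The only tiny bookkeeping point is that the threshold beyond which Proposition \ref{polycomp} and the stable-tabloid identifications kick in is independent of the residue class, so ``$n \gg 0$'' can be taken uniformly in the residue.
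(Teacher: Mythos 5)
Your proposal is correct and follows exactly the paper's argument: extend Proposition \ref{polycomp} linearly to the full stable hom-spaces and apply Corollary \ref{period} to the coefficient polynomials. You have simply written out the (brief) details the paper leaves implicit, including the uniformity of the threshold in $n$.
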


\noindent \textbf{Proof:} Extend Prososition \ref{polycomp} linearly to the entire hom-spaces, and then apply Corollary \ref{period}. $\square$

\medskip

Let $Perm^{\le r}_n \subset Rep(S_n)$ denote the full subcategory of representations where objects are direct sums of permutation modules $M(\lambda(n))$ with $|\lambda| \le r$.  We are now ready to state our first periodicity result for the modular representations of symmetric groups:

\begin{theorem}\label{permequiv}
For all $n, m > 2r$ satisfying $p^{\lceil log_p(r) \rceil } | (n-m)$ we construct an equivalence of categories $Perm^{\le r}_n \cong Perm^{\le r}_m$.

\end{theorem}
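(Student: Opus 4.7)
The plan is to build the equivalence essentially by definition: the stable hom-spaces from the previous subsection already give canonical identifications of morphism groups for different values of $n$, so the content is only to verify that these identifications also respect composition. I will first construct the functor on the full subcategory spanned by the basic permutation modules $M(\lambda(n))$ with $|\lambda|\le r$, and then extend additively to finite direct sums to get the full equivalence on $\text{Perm}^{\le r}_n$ and $\text{Perm}^{\le r}_m$.

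On objects I define $F(M(\lambda(n))) := M(\lambda(m))$. On morphisms, for $|\mu|,|\lambda|\le r$ and $n,m>2r$, I use the Carter--Lusztig basis identifications $\text{Hom}_{S_n}(M(\mu(n)),M(\lambda(n))) \cong \overline{\text{Hom}}(M(\mu),M(\lambda)) \cong \text{Hom}_{S_m}(M(\mu(m)),M(\lambda(m)))$ to transport each morphism, noting that both identifications are available because $n,m>2r$. This is a $k$-linear bijection on every Hom-space by construction, and it sends the identity map of $M(\lambda(n))$ to the identity of $M(\lambda(m))$ (the identity corresponds to the unique stable tabloid whose $i$-th row is labeled entirely by $i$, a description which is intrinsic to the stable data).

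The main thing to check is that $F$ preserves composition, and this is exactly what Proposition \ref{comp} was set up to provide. Unpacked: by Proposition \ref{polycomp}, for stable basis elements $f^\alpha, f^\beta$ the composition $f^\beta_n\circ f^\alpha_n$ expands in the stable Carter--Lusztig basis with coefficients given by integer-valued polynomials of degree at most $\max\{|\mu|,|\lambda|,|\nu|\}\le r$. By Corollary \ref{period}, the reductions mod $p$ of such polynomials are periodic in $n$ with period dividing $p^{\lceil \log_p(r)\rceil}$. Since $n\equiv m\pmod{p^{\lceil\log_p(r)\rceil}}$ by hypothesis, the structure constants for composition in the two categories agree, so $F$ respects composition. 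Being fully faithful and bijective on the chosen generating objects, $F$ extends uniquely to the desired equivalence of additive categories.

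I do not expect a genuine obstacle here; the nontrivial work has already been done in setting up the stable hom-spaces and establishing the polynomial/periodic behavior of their composition. The only point worth emphasizing is that the single period $p^{\lceil\log_p(r)\rceil}$ specified in the hypothesis simultaneously controls the composition law for every triple $(\mu,\lambda,\nu)$ with $|\mu|,|\lambda|,|\nu|\le r$, which is why the bound in the statement depends only on $r$ rather than on the partitions labeling the specific objects being composed.
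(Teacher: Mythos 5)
Your proposal matches the paper's proof: the equivalence is defined exactly as you describe, by sending $M(\lambda(n))$ to $M(\lambda(m))$ and transporting stable morphisms $f_n \mapsto f_m$, with Proposition \ref{comp} (itself resting on Proposition \ref{polycomp} and Corollary \ref{period}) guaranteeing compatibility with composition. Your added remarks about identity morphisms, additivity over direct sums, and the uniform period over all triples with $|\mu|,|\lambda|,|\nu|\le r$ are correct elaborations of the same argument rather than a different route.
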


\noindent \textbf{Proof:}  The equivalence is given by sending $M(\lambda(n))$ to $M(\lambda(m))$ and by sending $f_n$ to $f_m$ for stable maps $f$.  The previous proposition ensures that this actually defines a functor. $\square$

\medskip

If we let $Young^{\le r}_n$ be the full subcategory of $Rep(S_n)$ with objects being direct sums of Young modules $Y(\lambda(n))$ with $|\lambda| \le r$, then we get the following small but useful improvement on the previous theorem.

\begin{corollary}\label{youngequiv}
For all $n, m > 2r$ satisfying $p^{\lceil log_p(r) \rceil } | (n-m)$ there is an equivalence of categories $Young^{\le r}_n \cong Young^{\le r}_m$.
\end{corollary}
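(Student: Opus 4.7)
The plan is to realize $\text{Young}^{\le r}_n$ as the idempotent completion (Karoubi envelope) of $\text{Perm}^{\le r}_n$ inside $\text{Rep}(S_n)$ and then simply transport the equivalence of Theorem \ref{permequiv} through this formal construction.

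First I would pin down the indecomposable summands of objects of $\text{Perm}^{\le r}_n$. By part (3) of Theorem \ref{young}, every indecomposable summand of $M(\lambda(n))$ has the form $Y(\mu)$ for some partition $\mu$ of $n$ with $\mu \ge \lambda(n)$ in dominance. Any such $\mu$ satisfies $\mu_1 \ge \lambda(n)_1 = n - |\lambda|$, so $\mu = \nu(n)$ for the partition $\nu = (\mu_2, \mu_3, \ldots)$ with $|\nu| \le |\lambda| \le r$. Conversely, by part (1) of Theorem \ref{young} each $Y(\lambda(n))$ with $|\lambda| \le r$ actually appears as a summand of $M(\lambda(n)) \in \text{Perm}^{\le r}_n$. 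Together these two observations identify $\text{Young}^{\le r}_n$ precisely as the full subcategory of $\text{Rep}(S_n)$ of objects that are isomorphic to direct summands of objects of $\text{Perm}^{\le r}_n$; that is, $\text{Young}^{\le r}_n$ is the Karoubi envelope of $\text{Perm}^{\le r}_n$.

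Because an additive equivalence of categories extends (uniquely up to natural isomorphism) to an equivalence of their idempotent completions, the equivalence $\text{Perm}^{\le r}_n \cong \text{Perm}^{\le r}_m$ produced in Theorem \ref{permequiv} induces the asserted equivalence $\text{Young}^{\le r}_n \cong \text{Young}^{\le r}_m$. Under this equivalence, the image of $Y(\lambda(n))$ is some indecomposable summand $Y(\sigma(\lambda)(m))$ for a bijection $\sigma$ on $\{\lambda : |\lambda| \le r\}$; one does not need to pin down $\sigma$ for the statement of the corollary, though it is natural to expect $\sigma = \mathrm{id}$.

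The only content beyond the idempotent-completion formalism is the combinatorial check that no summands of objects of $\text{Perm}^{\le r}_n$ escape $\text{Young}^{\le r}_n$, and this is the main place the hypothesis $n > 2r$ is used (via the dominance inequality above). Everything else is essentially formal.
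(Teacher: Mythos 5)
Your proposal is correct and is essentially the paper's own argument: the paper also proves this by observing that $Young^{\le r}_n$ is the Karoubian envelope of $Perm^{\le r}_n$ (via Theorem \ref{young}) and transporting the equivalence of Theorem \ref{permequiv} through idempotent completion. Your dominance-order check that no summands escape $Young^{\le r}_n$ just spells out the detail the paper leaves implicit in the phrase ``by Theorem \ref{young}''.
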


\noindent \textbf{Proof:} This is immediate since $Young^{\le r}_n$ is equivalent to the Karoubian envelope of $Perm^{\le r}_n$, by Theorem \ref{young} $\square$

\medskip

As mentioned before, the categories of permutation modules (and similarly Young modules) are also well behaved with respect to induction and restriction to Young subgroups as well as taking tensor products. As one would hope, the equivalences of categories we just defined are compatible with these additional structures in the following sense:

\begin{proposition}\label{permfunctors}
For fixed $\ell$ and appropriate $r$, $n$, and $m$, the following diagrams commute in the sense that there are natural isomorphisms between the two functors obtained by going from the top left to the bottom right.

\begin{enumerate}[(1)]

\item \[
\begin{tikzcd}
Perm^{\le r}_n  \arrow{r}{Res} \arrow[swap]{d}{} & Perm_\ell \boxtimes Perm^{\le r}_{n-\ell} \arrow{d}{} \\
Perm^{\le r}_m  \arrow{r}{Res} & Perm_\ell \boxtimes Perm^{\le r}_{m-\ell}
\end{tikzcd}
\]


\item
\[
\begin{tikzcd}
Perm_\ell \boxtimes Perm^{\le r}_n \arrow{r}{Ind} \arrow[swap]{d}{} & Perm^{\le r+\ell}_{n+\ell} \arrow{d}{} \\
Perm_\ell \boxtimes Perm^{\le r}_m  \arrow{r}{Ind} & Perm^{\le r+\ell}_{m+\ell} 
\end{tikzcd}
\]

\item \[
\begin{tikzcd}
Perm^{\le r}_n\boxtimes Perm^{\le r}_n \arrow{r}{\otimes} \arrow[swap]{d}{} & Perm^{\le 2r}_n \arrow{d}{} \\
Perm^{\le r}_m \boxtimes Perm^{\le r}_m \arrow{r}{\otimes} & Perm^{\le 2r}_m
\end{tikzcd}
\]

\end{enumerate}
Where all the vertical maps are the equivalences of categories from Theorem \ref{permequiv}, possibly tensored with each other or with the identity functor on $Perm_\ell$.

\end{proposition}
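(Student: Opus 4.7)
The plan is to observe that the equivalences of Theorem \ref{permequiv} were constructed rigidly, by sending the generator $M(\lambda(n))$ to $M(\lambda(m))$ and each stable Carter-Lusztig basis map $f^\tau_n$ to $f^\tau_m$. Consequently, any operation on permutation modules that is compatible with the stable Carter-Lusztig basis will automatically commute with these equivalences up to a canonical natural isomorphism, and Proposition \ref{indep} provides exactly this compatibility for restriction, induction, and tensor product. The proof of each diagram is therefore a matter of unpacking definitions.

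For diagram (1), fix $n, m > 2r + \ell$ with $p^{\lceil log_p(r) \rceil} \mid (n - m)$, so that both vertical equivalences are defined (noting that $(n-\ell) - (m-\ell) = n - m$). On a generator $M(\lambda(n))$ with $|\lambda| \le r$, Proposition \ref{indep}(1) supplies a decomposition
\[
\text{Res}_{S_\ell \times S_{n-\ell}}\, M(\lambda(n)) \;\cong\; \bigoplus_{(\alpha, \mu)} M(\alpha) \boxtimes M(\mu(n-\ell))
\]
whose indexing set $(\alpha, \mu)$ is independent of $n$. The right-hand vertical equivalence $\text{Id} \boxtimes (\cdot)$ sends this summand-by-summand to the analogous decomposition of $\text{Res}\, M(\lambda(m))$, which defines the natural isomorphism on objects. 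On morphisms, Proposition \ref{indep}(2) says that each Carter-Lusztig basis map $h^{\tau(n)}$ decomposes under restriction as a fixed sum of products $h^\alpha \otimes h^{\beta(n)}$ with coefficients and indices independent of $n$; since the stable basis maps $f_n$ are identified with $f_m$ under our equivalences, the two paths around the square agree on all Carter-Lusztig basis morphisms, and hence on all morphisms by $k$-linearity.

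Parts (2) and (3) proceed identically, using parts (3) and (4) of Proposition \ref{indep} in place of (1) and (2). One needs only to choose the bounds on $n, m$ and the divisibility $p^{\lceil log_p(s) \rceil} \mid (n - m)$ with $s$ large enough to accommodate every subcategory appearing in the diagram (e.g.\ $s = r + \ell$ for part (2) and $s = 2r$ for part (3)); this is what ``appropriate $r, n, m$'' means. Beyond this bookkeeping no further argument is required, since both categories in each diagram are additive and generated, as $k$-linear categories, by permutation modules $M(\lambda(n))$ together with their Carter-Lusztig basis maps, so a functorial correspondence on these generators automatically extends to a natural isomorphism of functors. In short, the only ``hard part'' has already been absorbed into Proposition \ref{indep}; once one knows that restriction, induction, and tensor product stabilize in $n$ at the level of Carter-Lusztig data, the compatibility of these operations with the periodicity equivalences of Theorem \ref{permequiv} is formal.
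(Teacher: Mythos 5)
Your proposal is correct and follows the paper's own route: the paper proves this proposition in one line by saying that all three diagrams follow from Proposition \ref{indep} reduced modulo $p$, which is exactly the argument you spell out (stability of the restriction, induction, and tensor decompositions on objects and on Carter--Lusztig basis maps, combined with the fact that the equivalences of Theorem \ref{permequiv} are defined by matching these generators and basis maps). Your extra bookkeeping on the bounds and the divisibility exponent is sound and just makes explicit what the paper leaves as ``appropriate $r$, $n$, and $m$.''
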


\noindent \textbf{Proof:} These all follow immediately from Theorem \ref{indep} reduced modulo $p$. $\square$

\end{subsection}

\begin{subsection}{Beyond permutation modules}

The goal now is to extend the equivalences of categories in Theorem \ref{permequiv} to an equivalence of categories between certain abelian subcategories of representations of the symmetric groups $S_n$ and $S_m$.   For this it will be convenient to pass through Schur-Weyl duality and first prove an analogous result for Schur algebras.

 Let $\mathcal{S}(D,n)^{\le r}$ denote the Serre subcategory of $S(D,n)\text{-mod}$ generated by $L(\lambda(n)')$ with $|\lambda| \le r$. We have the following theorem:

\begin{theorem}\label{schurequiv}
Under the conditions of Theorem \ref{permequiv} for $n, m,$ and $r$ there is an equivalence of categories between $\mathcal{S}(D,n)^{\le r}$ and $\mathcal{S}(D',m)^{\le r}$ for $D \ge n$ and $D' \ge m$.
\end{theorem}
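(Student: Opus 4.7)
The plan is to reduce to Corollary \ref{youngequiv} via Ringel duality and the twisted Schur functor. First I would verify that the Serre subcategory $\mathcal{S}(D,n)^{\le r}$ inherits a highest weight structure from $S(D,n)\text{-mod}$. By Proposition \ref{hwsubcat}, it suffices to check that $\{\lambda(n)' : |\lambda| \le r\}$ is downward closed in dominance among partitions of $n$ with at most $D$ parts. If $\mu \le \lambda(n)'$, then conjugation reverses dominance, so $\mu' \ge \lambda(n)$, whence $\mu'_1 \ge n - |\lambda| \ge n - r$. Since $n > 2r$, the tail $\eta := (\mu'_2, \mu'_3, \ldots)$ is a partition with $|\eta| \le r$ and $\mu' = \eta(n)$, so $\mu = \eta(n)'$ as required. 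The same argument works at level $m$, and in both cases the underlying poset is canonically identified with $\{\lambda : |\lambda| \le r\}$ with the ordering inherited from the stable order.

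By Corollary \ref{ringelcor} the problem reduces to constructing an equivalence between the tilting subcategories $\text{Tilt}(\mathcal{S}(D,n)^{\le r})$ and $\text{Tilt}(\mathcal{S}(D',m)^{\le r})$ matching $T(\lambda(n)') \leftrightarrow T(\lambda(m)')$. The twisted Schur functors supply the bridge: Corollary \ref{twistedschur} gives $\tilde{\mathcal{F}}(T(\lambda(n)')) = Y(\lambda(n))$ and $\tilde{\mathcal{G}}(Y(\lambda(n))) = T(\lambda(n)')$, and combined with $\tilde{\mathcal{F}} \circ \tilde{\mathcal{G}} \cong \text{Id}$ the adjunction yields
$$\text{Hom}_{S(D,n)}(T(\lambda(n)'), T(\mu(n)')) \cong \text{Hom}_{S_n}(Y(\lambda(n)), Y(\mu(n))).$$
Hence $\tilde{\mathcal{F}}$ is a fully faithful and essentially surjective functor $\text{Tilt}(\mathcal{S}(D,n)^{\le r}) \xrightarrow{\sim} Young^{\le r}_n$ with quasi-inverse $\tilde{\mathcal{G}}$, and the analogous equivalence holds at level $m$.

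Finally I would compose
$$\text{Tilt}(\mathcal{S}(D,n)^{\le r}) \xrightarrow{\tilde{\mathcal{F}}} Young^{\le r}_n \xrightarrow{\sim} Young^{\le r}_m \xrightarrow{\tilde{\mathcal{G}}} \text{Tilt}(\mathcal{S}(D',m)^{\le r}),$$
where the middle arrow is Corollary \ref{youngequiv}, and apply Corollary \ref{ringelcor} to lift this to the desired equivalence of full categories. The main thing to check carefully is that the middle equivalence sends $Y(\lambda(n))$ to $Y(\lambda(m))$ rather than to some other indecomposable summand; this follows by induction on the stable order using Theorem \ref{young}(3), once one observes that the underlying permutation-module equivalence of Theorem \ref{permequiv} matches $M(\lambda(n))$ with $M(\lambda(m))$ in a way that respects the labeling.
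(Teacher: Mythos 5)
Your proposal follows the paper's own argument: restrict the highest weight structure via Proposition \ref{hwsubcat}, identify $\text{Tilt}(\mathcal{S}(D,n)^{\le r})$ with $Young^{\le r}_n$ via the twisted Schur functors, pass through Corollary \ref{youngequiv}, and conclude with Corollary \ref{ringelcor}. The extra verifications you supply (downward closedness of $\{\lambda(n)':|\lambda|\le r\}$, full faithfulness on tilting objects, and the matching $Y(\lambda(n))\leftrightarrow Y(\lambda(m))$) are correct elaborations of steps the paper leaves implicit.
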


\noindent \textbf{Proof:} First we note that Prososition \ref{hwsubcat} tells us that $\mathcal{S}(D,n)^{\le r}$ is a highest weight category, and moreover the characterization of tilting modules tells us its tilting modules are just the tilting modules $T(\lambda(n)')$ for $|\lambda| \le r$ from the larger category.

Then Corollary \ref{twistedschur} tells us that the twisted Schur functors restrict to an equivalence of categories between $\text{Tilt}(\mathcal{S}(D,n)^{\le r})$ and $Young^{\le r}_n$.  Doing the same for $D'$ and $m$ and applying Corollary \ref{youngequiv} gives an equivalence of categories between $\text{Tilt}(\mathcal{S}(D,n)^{\le r})$ and $\text{Tilt}(\mathcal{S}(D',m)^{\le r})$ sending $T(\lambda(n)')$ to $T(\lambda(m)')$ for all $|\lambda| \le r$.

This puts us exactly in the situation of Corollary \ref{ringelcor}, and applying this gives the desired result. $\square$ 

\medskip

\noindent \textbf{Remark:}  In the proof of Corollary \ref{ringelcor} we took a minimal full tilting module, which in this setting would correspond to a direct sum of all Young modules.  One downside for us is that Young modules are highly non-uniform in $n$ and difficult to do actual computations with.  If one wanted to do actual computations with these endomorphism algebras we'd recommend instead using a non-minimal full tilting module corresponding to a sum of appropriate permutation modules, this is better behaved but still Morita equivalent to the one for a minimal tilting module.

\medskip

Now we'd like to again apply the twisted Schur functors to get an analogous result for symmetric groups, however some care needs to be taken in stating the result. The complication comes from the fact that the Schur functor $\mathcal{F}$ kills irreducible objects $L(\lambda)$ for $p$-singular $\lambda$. So it is not necessarily true that the essential image of $\mathcal{S}(D,n)^{\le r}$ is the Serre subcategory generated by $Young_n^{\le r}$, as there could be extra extensions corresponding to objects in $S(D,n)\text{-mod}$ with a composition factor $L(\lambda)$ for some $p$-singular $\lambda$ outside our poset ideal.

Nevertheless we can still get an interesting equivalence of abelian subcategories this way. Let $Rep(S_n)_{im}^{\le r}$ denote the essential image of $\mathcal{S}(D,n)^{\le r}$ under the twisted Schur functor $\tilde{\mathcal{F}}$.  Then indeed we have:

\begin{theorem}\label{symequiv}
Under the conditions of Theorem \ref{schurequiv}, the equivalence of categories from Theorem \ref{permequiv} extends to an equivalence of categories between $Rep(S_n)_{im}^{\le r}$ and $Rep(S_m)_{im}^{\le r}$.
\end{theorem}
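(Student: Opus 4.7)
The plan is to descend the Schur-algebra equivalence from Theorem \ref{schurequiv} through the twisted Schur functor $\tilde{\mathcal{F}}$. The key technical ingredient is that $\tilde{\mathcal{F}}$ is exact with right adjoint $\tilde{\mathcal{G}}$ satisfying $\tilde{\mathcal{F}}\circ\tilde{\mathcal{G}}\cong\mathrm{Id}$ (Corollary \ref{twistedschur}), which makes $\tilde{\mathcal{F}}$ a Serre quotient functor. In particular, restricting to $\mathcal{S}(D,n)^{\le r}$, the essential image $Rep(S_n)_{im}^{\le r}$ is equivalent (via $\tilde{\mathcal{F}}_n$) to the Gabriel quotient $\mathcal{S}(D,n)^{\le r}/\ker(\tilde{\mathcal{F}}_n)$.

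Let $\Psi \colon \mathcal{S}(D,n)^{\le r} \xrightarrow{\sim} \mathcal{S}(D',m)^{\le r}$ be the equivalence provided by Theorem \ref{schurequiv}, which by construction sends $L(\lambda(n)')$ to $L(\lambda(m)')$ and $T(\lambda(n)')$ to $T(\lambda(m)')$. The first substantive step is to verify that $\Psi$ carries $\ker(\tilde{\mathcal{F}}_n)$ onto $\ker(\tilde{\mathcal{F}}_m)$. Since the kernel of an exact functor is a Serre subcategory, it suffices to check the condition on simples. By Corollary \ref{twistedschur}(2), $\tilde{\mathcal{F}}(L(\lambda(n)'))$ vanishes exactly when $\lambda(n)'$ fails to be $p$-restricted, i.e.\ when $\lambda(n)$ is not $p$-regular. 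For $n > 2r$ we have $n - |\lambda| > \lambda_1$, so the first row of $\lambda(n)$ is strictly longer than all others; hence $\lambda(n)$ is $p$-regular iff no part size repeats $p$ or more times among $\lambda_1,\ldots,\lambda_k$. This condition depends only on $\lambda$, so $\lambda(n)$ and $\lambda(m)$ are simultaneously $p$-regular, and $\Psi$ does exchange the two kernels.

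Descending $\Psi$ through the Gabriel quotients then yields an equivalence
\[
\bar{\Psi}\colon Rep(S_n)_{im}^{\le r} \;\cong\; \mathcal{S}(D,n)^{\le r}/\ker(\tilde{\mathcal{F}}_n) \;\xrightarrow{\sim}\; \mathcal{S}(D',m)^{\le r}/\ker(\tilde{\mathcal{F}}_m) \;\cong\; Rep(S_m)_{im}^{\le r}.
\]
To see that $\bar{\Psi}$ extends the equivalence of Theorem \ref{permequiv}, recall from Corollary \ref{twistedschur}(2) that $\tilde{\mathcal{F}}(T(\lambda(n)')) = Y(\lambda(n))$. Since $\Psi$ sends $T(\lambda(n)') \mapsto T(\lambda(m)')$, the induced functor $\bar{\Psi}$ sends $Y(\lambda(n)) \mapsto Y(\lambda(m))$, matching the Young module equivalence of Corollary \ref{youngequiv}; this in turn is the Karoubian envelope of the permutation module equivalence of Theorem \ref{permequiv}, so the required compatibility holds on every $M(\lambda(n))$.

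The main obstacle is justifying the Gabriel-quotient interpretation of $Rep(S_n)_{im}^{\le r}$, since the right adjoint $\tilde{\mathcal{G}}$ need not land in the subcategory $\mathcal{S}(D,n)^{\le r}$. One bypasses this by observing that the only axiomatic content needed is exactness of $\tilde{\mathcal{F}}_n$ together with the identity $\tilde{\mathcal{F}}\circ\tilde{\mathcal{G}} \cong \mathrm{Id}$ (guaranteeing that $\tilde{\mathcal{F}}_n$ is essentially surjective onto its image and induces isomorphisms on Hom-spaces modulo $\ker(\tilde{\mathcal{F}}_n)$); standard localization theory then produces the desired equivalence between the essential image and the Serre quotient.
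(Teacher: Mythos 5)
Your proposal follows essentially the same route as the paper: identify $Rep(S_n)_{im}^{\le r}$ with the Serre (Gabriel) quotient of $\mathcal{S}(D,n)^{\le r}$ by the kernel of $\tilde{\mathcal{F}}$ using exactness and the section $\tilde{\mathcal{G}}$, observe that the kernel is spanned by the simples $L(\lambda(n)')$ with $\lambda$ $p$-singular so that the equivalence of Theorem \ref{schurequiv} matches the two kernels, and descend; the compatibility with Theorem \ref{permequiv} via $\tilde{\mathcal{F}}(T(\lambda(n)'))=Y(\lambda(n))$ is likewise the paper's implicit argument. Your closing remark about $\tilde{\mathcal{G}}$ possibly not landing in $\mathcal{S}(D,n)^{\le r}$ is a fair point of care that the paper's terser application of Gabriel's lemma glosses over, and your resolution (only fullness/faithfulness of the ambient quotient functor plus Serre-subcategory closure is needed) is correct.
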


\noindent \textbf{Proof:} First we claim that $Rep(S_n)_{im}^{\le r}$ is equivalent to the Serre quotient of $\mathcal{S}(D,n)^{\le r}$ by the Serre subcategory $\mathcal{S}(D,n)^{\le r}_{sing}$ generated by those $L(\lambda(n)')$ with $p$-singular $\lambda$.

For this we will need a categorical result due to Gabriel. Say that an exact functor $\mathcal{D}: \mathcal{A} \rightarrow \mathcal{B}$ admits a \emph{section} if it admits a right adjoint $\mathcal{D}'$ such that the counit of adjunction $\epsilon: \mathcal{D} \circ \mathcal{D}' \rightarrow \text{Id}_{\mathcal{B}}$ is an isomorphism.  

\begin{lemma}\textbf{(Gabriel 1962 \cite{Gabriel} Prop III.2.5)}
Suppose $\mathcal{D}: \mathcal{A} \rightarrow \mathcal{B}$ is an exact functor admitting a section, then $\mathcal{A} / \text{ker}(\mathcal{D}) \cong \mathcal{B}$,  where  $\mathcal{A} / \text{ker}(\mathcal{D})$ denotes the Serre quotient of $\mathcal{A}$ by the Serre subcategory of objects which get sent to zero under $\mathcal{D}$.
\end{lemma}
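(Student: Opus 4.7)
The plan is to exhibit an explicit quasi-inverse to the induced functor on the Serre quotient. Set $\mathcal{N} := \ker(\mathcal{D})$. Because $\mathcal{D}$ is exact and kills $\mathcal{N}$ by definition, the universal property of the Serre quotient produces a unique exact functor $\bar{\mathcal{D}}: \mathcal{A}/\mathcal{N} \to \mathcal{B}$ such that $\bar{\mathcal{D}} \circ Q = \mathcal{D}$, where $Q: \mathcal{A} \to \mathcal{A}/\mathcal{N}$ is the quotient functor. The goal is to show $\bar{\mathcal{D}}$ is an equivalence.

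The candidate inverse is $\bar{\mathcal{D}}' := Q \circ \mathcal{D}': \mathcal{B} \to \mathcal{A}/\mathcal{N}$. In one direction, $\bar{\mathcal{D}} \circ \bar{\mathcal{D}}' = \bar{\mathcal{D}} \circ Q \circ \mathcal{D}' = \mathcal{D} \circ \mathcal{D}'$, and the counit $\epsilon: \mathcal{D}\mathcal{D}' \xrightarrow{\sim} \text{Id}_{\mathcal{B}}$ is an isomorphism by the hypothesis that $\mathcal{D}'$ is a section. So this composition is naturally isomorphic to $\text{Id}_{\mathcal{B}}$.

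For the other composition, I want to show $\bar{\mathcal{D}}' \circ \bar{\mathcal{D}} \cong \text{Id}_{\mathcal{A}/\mathcal{N}}$, i.e.\ that the unit $\eta_X: X \to \mathcal{D}'\mathcal{D} X$ of the adjunction becomes an isomorphism after applying $Q$, naturally in $X$. The triangle identity $\epsilon_{\mathcal{D} X} \circ \mathcal{D}(\eta_X) = \text{id}_{\mathcal{D} X}$ together with $\epsilon_{\mathcal{D} X}$ being invertible forces $\mathcal{D}(\eta_X)$ to be an isomorphism. Applying exactness of $\mathcal{D}$ to the short exact sequences associated to $\eta_X$ then gives $\mathcal{D}(\ker \eta_X) = 0$ and $\mathcal{D}(\mathrm{coker}\,\eta_X) = 0$, so both $\ker \eta_X$ and $\mathrm{coker}\,\eta_X$ lie in $\mathcal{N}$. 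By the defining property of the Serre quotient, any morphism whose kernel and cokernel are in $\mathcal{N}$ is inverted by $Q$, so $Q(\eta_X)$ is an isomorphism. Naturality in $X$ is automatic since $\eta$ is a natural transformation.

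The only real subtlety I anticipate is the last step: that a morphism in $\mathcal{A}$ whose kernel and cokernel lie in $\mathcal{N}$ descends to an isomorphism in $\mathcal{A}/\mathcal{N}$. This is a standard fact about Serre quotients (one can see it either from the explicit roof description of morphisms, or from the fact that $Q$ is exact and kills $\mathcal{N}$, so it kills $\ker \eta_X$ and $\mathrm{coker}\,\eta_X$, turning $\eta_X$ into a morphism with zero kernel and zero cokernel). Once this is in hand, $\bar{\mathcal{D}}'$ is a genuine quasi-inverse to $\bar{\mathcal{D}}$ and the equivalence $\mathcal{A}/\ker(\mathcal{D}) \cong \mathcal{B}$ follows. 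No delicate calculation is required; the proof is really just chasing the unit and counit of the adjunction through the universal property of the quotient.
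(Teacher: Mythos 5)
The paper does not prove this lemma at all — it is quoted verbatim from Gabriel's thesis (Prop III.2.5) — so there is no internal argument to compare against; your proof is correct and is essentially the standard one. The chain you run (universal property of the quotient to get $\bar{\mathcal{D}}$, counit isomorphism for one composite, triangle identity plus exactness to put $\ker\eta_X$ and $\mathrm{coker}\,\eta_X$ into $\ker(\mathcal{D})$, and the fact that $Q$ inverts morphisms with kernel and cokernel in the Serre subcategory) is exactly how this result is proved in the literature. The only point worth making explicit is the very last step: what you produce directly is a natural isomorphism $Q \to \bar{\mathcal{D}}'\bar{\mathcal{D}}Q$, and to conclude $\mathrm{Id}_{\mathcal{A}/\ker(\mathcal{D})} \cong \bar{\mathcal{D}}'\bar{\mathcal{D}}$ you should invoke that precomposition with the localization functor $Q$ is fully faithful on functors out of the quotient (naturality against morphisms of $\mathcal{A}/\ker(\mathcal{D})$ that are not images of morphisms of $\mathcal{A}$ then comes for free, since every such morphism is a composite of images of $\mathcal{A}$-morphisms and inverses of morphisms inverted by $Q$); this is standard, but ``naturality is automatic since $\eta$ is natural'' only covers morphisms coming from $\mathcal{A}$.
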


Our description (Corollary \ref{twistedschur}) of twisted Schur functors, tells us that $\tilde{\mathcal{G}}$ is a section of $\tilde{\mathcal{F}}$, and that the irreducibles in the kernel of $\tilde{\mathcal{F}}$ are exactly those $L(\lambda(n)')$ with $p$-singular $\lambda$. So the lemma applies and gives us the claim.

Then we note that the same holds for $D'$ and $m$ and that the equivalence of categories from Theorem \ref{schurequiv} sends $\mathcal{S}(D,n)^{\le r}_{sing}$ to $\mathcal{S}(D',m)^{\le r}_{sing}$, the theorem follows immediately.  $\square$

\medskip

This theorem has the slightly dissatisfying property that the two subcategories of $Rep(S_n)$ and  $Rep(S_m)$  we are comparing are described extrinsically in terms of Schur functors. However we can easily fix this by shrinking to two subcategories which are smaller but easier to describe intrinsically.  Let $Rep(S_n)^{\le r}$ denote the abelian subcategory of $Rep(S_n)_{im}^{\le r}$ generated by $Perm^{\le r}_n$. It follows trivially that:

\begin{corollary}\label{main}
Under the conditions of the previous theorem, the equivalence of categories between $Perm^{\le r}_n$ and $Perm^{\le r}_m$ of Theorem \ref{permequiv} extends to an equivalence between $Rep(S_n)^{\le r}$ and $Rep(S_m)^{\le r}$.
\end{corollary}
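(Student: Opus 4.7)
The plan is to show that the equivalence $\Phi \colon Rep(S_n)_{im}^{\le r} \to Rep(S_m)_{im}^{\le r}$ produced by Theorem \ref{symequiv} already does the job, once we check that its restriction to permutation modules recovers the equivalence from Theorem \ref{permequiv}. Given that check, the corollary is formal: an equivalence of abelian categories preserves kernels, cokernels, and extensions, so it must identify the smallest abelian subcategory of $Rep(S_n)_{im}^{\le r}$ containing $Perm^{\le r}_n$ with the smallest abelian subcategory of $Rep(S_m)_{im}^{\le r}$ containing $\Phi(Perm^{\le r}_n)$. This is exactly the statement $\Phi(Rep(S_n)^{\le r}) = Rep(S_m)^{\le r}$.

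First I would trace through the chain of constructions to see that $\Phi$ restricts to the equivalence of Theorem \ref{permequiv} on $Perm^{\le r}_n$. The functor $\Phi$ was defined in Theorem \ref{symequiv} by descending the Schur-algebra equivalence $\Psi \colon \mathcal{S}(D,n)^{\le r} \to \mathcal{S}(D',m)^{\le r}$ through the Serre quotient description furnished by Gabriel's lemma, and $\Psi$ in turn was produced in Theorem \ref{schurequiv} via Corollary \ref{ringelcor} out of the equivalence $Young^{\le r}_n \cong Young^{\le r}_m$ of Corollary \ref{youngequiv}. Unwinding this: on tilting modules $T(\lambda(n)')$, $\Psi$ matches up with the Young-module equivalence under $\tilde{\mathcal{F}}$, and since $\tilde{\mathcal{F}} \circ \tilde{\mathcal{G}} \cong \text{Id}$ (Corollary \ref{twistedschur}(3)), the induced functor $\Phi$ literally sends $Y(\lambda(n))$ to $Y(\lambda(m))$ with the action on Hom-spaces dictated by the Young-module equivalence.

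Now the equivalence $Young^{\le r}_n \cong Young^{\le r}_m$ of Corollary \ref{youngequiv} was itself just the Karoubian envelope of the permutation-module equivalence of Theorem \ref{permequiv}. So on the subcategory $Perm^{\le r}_n$, which sits inside $Young^{\le r}_n$ as a full subcategory by Theorem \ref{young}, the functor $\Phi$ agrees (up to natural isomorphism) with the equivalence of Theorem \ref{permequiv}, sending $M(\lambda(n))$ to $M(\lambda(m))$ and $f_n$ to $f_m$ for each stable map $f$.

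With that compatibility in hand the corollary is immediate: $Rep(S_n)^{\le r}$ is by definition the abelian subcategory of $Rep(S_n)_{im}^{\le r}$ generated by $Perm^{\le r}_n$, and any equivalence of abelian categories sends the abelian subcategory generated by a class of objects to the abelian subcategory generated by the image of that class. Hence $\Phi$ restricts to an equivalence $Rep(S_n)^{\le r} \cong Rep(S_m)^{\le r}$ extending the one on permutation modules. The only non-routine step is the bookkeeping that identifies $\Phi|_{Perm^{\le r}_n}$ with the Theorem \ref{permequiv} functor, and even that is mild since every arrow in the construction was fixed by an explicit choice of generators.
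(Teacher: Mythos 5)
Your proposal is correct and follows essentially the same route as the paper, which simply states that the corollary follows trivially from Theorem \ref{symequiv} once $Rep(S_n)^{\le r}$ is defined as the abelian subcategory generated by $Perm^{\le r}_n$. The compatibility check you spell out (that the Schur--Ringel equivalence restricts, via $\tilde{\mathcal{F}}\circ\tilde{\mathcal{G}}\cong\mathrm{Id}$, to the Theorem \ref{permequiv} equivalence on permutation modules) is exactly the bookkeeping the paper leaves implicit, and your argument for it is sound.
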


Again there is more we can ask about the representation theory of symmetric groups than just the structure of the abelian categories of representations. In particular one might also about induction, restriction, tensor products, and duality functors on these categories and how they interact with our equivalences of categories. We have:

\begin{proposition}\label{extrastruct}
The functors defining induction, restriction, tensor products, and duality all commute (in appropriate senses, similar to Prososition \ref{permfunctors}) with the equivalences of categories in Corollary \ref{main}.
\end{proposition}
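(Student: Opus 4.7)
The plan is to bootstrap Proposition \ref{permfunctors}—which establishes the compatibility at the level of permutation modules—up to the abelian subcategories $Rep(S_n)^{\le r}$. The key observation is that $Rep(S_n)^{\le r}$ is by construction the smallest abelian subcategory of $Rep(S_n)_{im}^{\le r}$ containing $Perm^{\le r}_n$, so every object $V$ admits a two-step presentation $P_1 \to P_0 \to V \to 0$ with $P_0, P_1 \in Perm^{\le r}_n$; moreover, the equivalence $\Phi_{n,m}$ of Corollary \ref{main} is exact, being an equivalence of abelian categories.

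For induction, restriction, and tensor product, each of the relevant functors is exact (in each variable, for tensor product). Given $V \in Rep(S_n)^{\le r}$ and a presentation by permutation modules as above, applying $\text{Res}$ yields a presentation of $\text{Res}(V)$ by objects of $Perm_\ell \boxtimes Perm^{\le r}_{n-\ell}$. The natural isomorphism on the $P_i$ supplied by Proposition \ref{permfunctors}(1), together with naturality on the connecting morphism, passes to cokernels and yields the desired natural isomorphism on $\text{Res}(V)$. The arguments for induction and tensor product are identical, using Proposition \ref{permfunctors}(2) and (3) respectively; for tensor product one takes presentations in both variables and uses exactness of $\otimes$ in each slot.

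Duality is handled in the same spirit but contravariantly. Since permutation modules are self-dual, $Perm^{\le r}_n$ is stable under duality, and Proposition \ref{indep}(5) ensures that $\Phi_{n,m}$ commutes with the duality map on permutation modules. Dualizing a presentation $P_1 \to P_0 \to V \to 0$ gives a left exact sequence $0 \to V^* \to P_0^* \to P_1^*$ realizing $V^*$ as a kernel, and the same argument applied to this kernel produces a natural isomorphism $\Phi_{n,m}(V^*) \cong \Phi_{n,m}(V)^*$ (after interchanging the roles of $n$ and $m$ as appropriate).

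The main obstacle is purely categorical bookkeeping: one must verify that the natural isomorphisms so constructed are independent of the choice of presentation and assemble into a bona fide natural transformation of functors. This reduces to the standard fact that a natural transformation between two exact functors on an abelian category is determined by its restriction to a generating subcategory, which in our setting is $Perm^{\le r}_n$. Since the isomorphisms coming from Proposition \ref{permfunctors} are already natural on $Perm^{\le r}_n$, the extension to $Rep(S_n)^{\le r}$ exists and is unique, and the rest is routine diagram chasing.
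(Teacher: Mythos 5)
Your overall strategy is the paper's own: the paper's proof consists of the observation that the compatibilities are already known on $Perm^{\le r}_n$ (Proposition \ref{permfunctors}, together with Proposition \ref{indep}(5) for duality) and that, since all the functors involved are exact, the natural isomorphisms ``automatically'' extend to the abelian subcategories these permutation modules generate. So in spirit you and the paper agree. However, your stated ``key observation'' --- that every object $V$ of $Rep(S_n)^{\le r}$ admits a two-step presentation $P_1 \to P_0 \to V \to 0$ with $P_0,P_1 \in Perm^{\le r}_n$ --- does not follow from $Rep(S_n)^{\le r}$ being the smallest abelian subcategory containing $Perm^{\le r}_n$. That subcategory is the closure of $Perm^{\le r}_n$ under finite direct sums and \emph{iterated} kernels and cokernels of morphisms between objects already constructed; having every object be a cokernel of a single map between permutation modules is the much stronger statement that the permutation modules projectively generate, and there is no reason for that here: under Schur--Weyl duality the $M(\lambda(n))$ correspond to tilting modules of $\mathcal{S}(D,n)^{\le r}$ (and their images in the Serre quotient), not to projectives. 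The same issue affects your duality argument, which dualizes such a presentation.

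The gap is repairable, and the repair is what the paper implicitly intends: extend the natural isomorphism by induction on the construction of objects, i.e.\ given a morphism $f\colon V \to W$ between objects on which the isomorphisms are already defined and natural, exactness of $\mathrm{Ind}$, $\mathrm{Res}$, $\otimes$, duality, and of the equivalences of Corollary \ref{main} lets you define the isomorphism on $\ker f$ and $\mathrm{coker}\, f$ and check naturality; iterating exhausts $Rep(S_n)^{\le r}$. Your closing paragraph appeals to essentially this fact (uniqueness and existence of extensions of natural transformations between exact functors from a generating class), so the argument goes through once the unjustified two-step presentation claim is replaced by this induction rather than relied upon.
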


\noindent \textbf{Proof:} We already saw versions of this for permutation modules in Prososition \ref{permfunctors}. All of these functors are exact so this automatically extends to the abelian categories they generate. $\square$

\medskip

\noindent \textbf{Remark:} It is likely that one can prove similar statements for the equivalences of categories in Theorem \ref{symequiv}, but that may require more careful analysis of what these functors correspond to under Schur-Weyl duality.  We haven't pursued this as the categories $Rep(S_n)^{\le r}$ are enough for most of our purposes. Moreover it is unclear to what extent, if any, $Rep(S_n)^{\le r}$ and $Rep(S_n)_{im}^{\le r}$ are actually different.

\end{subsection}

\end{section}

\begin{section}{Applications, corollaries, and conjectures}

\begin{subsection}{Numerical invariants}
Having an equivalence of categories is of course much stronger than equating numerical invariants. Nevertheless we'd like to highlight a few important numeric corollaries to our main theorems, particularly those that are either about well studied invariants in the context of modular representations of symmetric groups, or those which are analogous to known stabilization results in characteristic zero. Fix a prime number $p$.

\begin{proposition} \label{numeric}The following sequences of integers are eventually periodic in $n$ with period a power of $p$:

\begin{enumerate}
\item The $p$-Kostka numbers $(M(\lambda(n)) : Y(\mu(n)))$ recording the multiplicity of $Y(\mu(n))$ as a summand in $M(\lambda(n))$, as well as the dimension of the $\lambda$ weight space in the irreducible $GL_D$ representation $L(\mu)$ (see \cite{Donkin}).

\item The decomposition numbers $[S^{\lambda(n)} : D^{\mu(n)}]$ recording the composition multiplicity of irreducible modules in Specht modules.

\item The modular Littlewood-Richardson coefficients $c_{\nu,\lambda(n)}^{\mu(n + |\nu|)}$ recording the composition multiplicity of $D(\mu(n + |\nu|))$ in $Ind(D^\nu \boxtimes D^{\lambda(n)})$. 

\item The p-Kronecker coefficients $g_{\nu(n),\lambda(n)}^{\mu(n)}$ recording the composition multiplicity of $D^{\mu(n)}$ in $D^{\nu(n)} \otimes D^{\lambda(n)}$.

\end{enumerate}

\end{proposition}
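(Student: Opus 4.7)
The plan is to derive each of the four periodicity statements directly from Corollary \ref{main} and Proposition \ref{extrastruct}. The key observation is that any equivalence of abelian categories preserves Jordan--Hölder multiplicities and indecomposable summand multiplicities, and when it additionally commutes with induction, restriction, tensor product and duality (as Proposition \ref{extrastruct} guarantees), it preserves composition multiplicities inside the images of those functors. So for each of the four invariants the task reduces to: (i) choose $r$ large enough so that every module showing up lies in $Rep(S_n)^{\le r}$ (or $Perm^{\le r}_n$), and (ii) read off periodicity with period dividing $p^{\lceil \log_p r\rceil}$ from the equivalence $Rep(S_n)^{\le r} \cong Rep(S_m)^{\le r}$ whenever $n \equiv m \pmod{p^{\lceil \log_p r\rceil}}$ and $n, m > 2r$.

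For (1), take $r \ge \max(|\lambda|,|\mu|)$. Then $M(\lambda(n)) \in Perm^{\le r}_n$ by definition, and $Y(\mu(n))$ is one of its indecomposable summands (Theorem \ref{young}), hence also lies in $Perm^{\le r}_n$. The equivalence of Theorem \ref{permequiv} (or rather Corollary \ref{youngequiv} applied inside the Karoubian envelope) identifies $M(\lambda(n))$ with $M(\lambda(m))$ and $Y(\mu(n))$ with $Y(\mu(m))$, so the summand multiplicity is constant on residue classes of $n$ modulo $p^{\lceil \log_p r\rceil}$. The statement about $GL_D$-weight multiplicities then follows from Donkin's identification of $p$-Kostka numbers with weight multiplicities in irreducible $GL_D$-modules.

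For (2), take $r \ge \max(|\lambda|,|\mu|)$. Then $S^{\lambda(n)} \hookrightarrow M(\lambda(n))$ and $D^{\mu(n)}$ is a composition factor of $S^{\mu(n)} \subset M(\mu(n))$, so both objects lie in $Rep(S_n)^{\le r}$; Corollary \ref{main} then identifies $[S^{\lambda(n)} : D^{\mu(n)}] = [S^{\lambda(m)} : D^{\mu(m)}]$. For (3), choose $r \ge |\nu| + \max(|\lambda|, |\mu|)$, so that $D^\nu \in Rep(S_{|\nu|})^{\le r}$, $D^{\lambda(n)} \in Rep(S_n)^{\le r}$, and the induction $\operatorname{Ind}(D^\nu \boxtimes D^{\lambda(n)})$ lands in $Rep(S_{n+|\nu|})^{\le r}$ containing $D^{\mu(n+|\nu|)}$; Proposition \ref{extrastruct} says induction commutes with the equivalences, giving the desired periodicity of composition multiplicities. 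For (4), take $r \ge |\nu|+|\lambda|+|\mu|$ and apply the same argument using the compatibility of tensor products in Proposition \ref{extrastruct}.

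The main work here is not conceptual but bookkeeping: verifying that in each case $r$ can be chosen depending only on the fixed partitions (and, in (3), on the additive shift $|\nu|$), and that every module appearing genuinely sits in the subcategory $Rep(S_n)^{\le r}$. Once $r$ is fixed the integer in question is a function of $n \bmod p^{\lceil \log_p r\rceil}$ for $n \gg 0$, which is exactly the claimed eventual periodicity with period a power of $p$.
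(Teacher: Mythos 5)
Your proposal is correct and takes essentially the same approach as the paper: the paper's proof is a terse version of exactly this argument, noting that the equivalences of Corollary \ref{main} (with Proposition \ref{extrastruct} supplying the compatibility with induction and tensor product needed for parts (3) and (4)) descend to isomorphisms of Grothendieck groups where these invariants live, with the period a power of $p$ coming from the congruence condition on $n-m$. Your write-up merely makes explicit the bookkeeping the paper leaves implicit --- the choice of $r$ and the identification of $M$, $Y$, $S^\lambda$, $D^\lambda$ objects under the equivalences.
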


\noindent \textbf{Proof:} Our equivalences of categories descend to isomorphisms of Grothendieck groups, where all of these numerical invariants are defined. The periodicity comes from the fact that our equivalences only exist when $(n-m)$ is divisible by a sufficiently large power of $p$.  $\square$

\medskip

In characteristic zero there is a well known polynomiality result about the characters $\chi_{S^{\lambda(n)}}$ of irreducible $S_n$ modules (which in that case are Specht modules). We have the following modular analog for the irreducible Brauer characters $\hat{\chi}_{D^{\lambda}}$ with `quasi-polynomial' replacing `polynomial':

\begin{proposition}
For any partition $\lambda$ there exists an integer $\ell$ along with integer valued polynomial $P_{\lambda,j} \in \mathbb{Q}[x_1,x_2,x_3,...]$ of degree at most $|\lambda|$ for $j =1, 2, \dots, p^\ell$ such that: $$\hat{\chi}_{D^{\lambda(n)}}(\sigma) =  P_{\lambda,j}(X_1(\sigma), X_2(\sigma), ...)$$

\noindent for all $n \gg 0$ satisfying $n\equiv j \mod p^\ell$, where $X_i(\sigma)$ denotes the number of length $i$ cycles in $\sigma$. 

\end{proposition}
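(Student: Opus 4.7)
The plan is to express the Brauer character $\hat{\chi}_{D^{\lambda(n)}}$ as a $\mathbb{Z}$-linear combination of ordinary characters $\chi_{M(\mu(n))}$ with $|\mu| \leq |\lambda|$, whose coefficients depend on $n$ but are periodic modulo a power of $p$. Since each $\chi_{M(\mu(n))}$ is already an integer-valued polynomial of controlled degree in the cycle counts, this will immediately deliver the required $P_{\lambda,j}$.

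Setting $r=|\lambda|$, I would work in the Grothendieck group $K$ of the abelian category $Rep(S_n)^{\leq r}$ from Corollary \ref{main}. By construction $[D^{\lambda(n)}]\in K$, and the classes $\{[M(\mu(n))] : |\mu|\leq r\}$ span $K$: by Theorem \ref{young} together with the composition factor analysis for permutation modules, the modular Kostka matrix $A_{\mu,\nu}(n) := [M(\mu(n)) : D^{\nu(n)}]$ vanishes unless $\nu\preceq\mu$ in the stable order and equals $1$ on the diagonal, so restricted to $|\mu|,|\nu|\leq r$ it is unitriangular and $\mathbb{Z}$-invertible. Inverting yields integers $B_{\lambda,\mu}(n)$ with support in $|\mu|\leq|\lambda|$ satisfying
$$[D^{\lambda(n)}] = \sum_{|\mu|\leq|\lambda|} B_{\lambda,\mu}(n)\,[M(\mu(n))]$$
in $K$. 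Each $M(\mu(n))$ is lifted from $\mathbb{Z}$, so on a $p$-regular $\sigma$ its Brauer character equals its ordinary character, which counts $\sigma$-invariant ordered set-partitions of shape $\mu(n)$. Such a count amounts to distributing the cycles of $\sigma$ into labeled groups of total lengths $\mu_1,\ldots,\mu_k$ (the first block being automatically the complement), and is therefore an integer-valued polynomial $Q_\mu(x_1,x_2,\ldots)$ of total degree at most $|\mu|$ in the cycle counts, independent of $n$ once $n$ is large enough.

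For the periodicity input, Corollary \ref{main} gives an integer $\ell$ such that for $n\gg 0$ the integer $A_{\mu,\nu}(n)$ depends only on $n\bmod p^\ell$, uniformly in $|\mu|,|\nu|\leq r$. A periodic family of $\mathbb{Z}$-unitriangular matrices has inverses with the same period (back-substitution stays inside $\mathbb{Z}$), so $B_{\lambda,\mu}(n)$ is likewise $p^\ell$-periodic. Choosing a representative $j\in\{1,\ldots,p^\ell\}$ for each residue class and setting
$$P_{\lambda,j}(x_1,x_2,\ldots) := \sum_{|\mu|\leq|\lambda|} B_{\lambda,\mu}(j)\,Q_\mu(x_1,x_2,\ldots),$$
the identity $\hat{\chi}_{D^{\lambda(n)}}(\sigma) = P_{\lambda,j}(X_1(\sigma),X_2(\sigma),\ldots)$ holds for all sufficiently large $n\equiv j\pmod{p^\ell}$, and $P_{\lambda,j}$ is an integer-valued polynomial of total degree at most $|\lambda|$.

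The only point that requires care is the degree bound: one must ensure that the inversion does not introduce $M(\mu(n))$-terms with $|\mu|>|\lambda|$. This is precisely why I take $r=|\lambda|$ rather than some larger truncation, so that the entire Grothendieck-group calculation happens in a category whose generating permutation modules already satisfy $|\mu|\leq|\lambda|$. Everything else is formal.
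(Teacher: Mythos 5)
Your argument is correct in substance but runs along a parallel track to the paper's. The paper inverts the decomposition matrix: since Specht modules are defined over $\mathbb{Z}$, their Brauer characters coincide with the classical characteristic-zero characters, which are polynomial of degree at most $|\lambda|$ in the cycle counts; unitriangularity of $[S^{\mu(n)}:D^{\nu(n)}]$ together with the periodicity of decomposition numbers (part 2 of Proposition \ref{numeric}) then gives the quasi-polynomial formula inductively. You instead invert the transition matrix from permutation modules, $[M(\mu(n)):D^{\nu(n)}]$, and supply the characteristic-zero input directly by counting $\sigma$-stable set partitions. This buys a more self-contained argument (no appeal to the classical polynomiality of Specht characters), at the cost of needing Young's rule plus the triangularity of decomposition numbers to justify the shape of your matrix; note that this triangularity is not a consequence of Theorem \ref{young}, which concerns indecomposable direct summands rather than composition factors.

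One statement does need repair: your matrix $A_{\mu,\nu}(n)$ is not unitriangular over all $\mu,\nu$ with $|\mu|,|\nu|\le r$, because when $\mu$ is $p$-singular (e.g.\ $\mu=(1^p)$) the partition $\mu(n)$ is $p$-singular, $D^{\mu(n)}=0$, and the corresponding diagonal entry is $0$, not $1$, while that column vanishes identically. The fix is immediate: index both rows and columns by the $p$-regular $\mu$ with $|\mu|\le r$ (a set independent of $n$ for $n\gg0$); every composition factor of $M(\mu(n))$ is of the form $D^{\nu(n)}$ with $\nu$ $p$-regular and $\nu\preceq\mu$, so on this index set the matrix is genuinely unitriangular for the stable order, and since $\nu\preceq\mu$ forces $|\nu|\le|\mu|$ the inversion stays supported on $|\mu|\le|\lambda|$ and your degree bound survives. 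The periodicity you extract from Corollary \ref{main} is the same Grothendieck-group periodicity the paper invokes for Proposition \ref{numeric}, so with this $p$-regularity adjustment your proof goes through.
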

\noindent \textbf{Proof:} Specht modules are defined over $\mathbb{Z}$ and their Brauer characters agree with the usual characters in characteristic zero.  The result then follows inductively by the lower triangularity of the decomposition matrices using the analogous fact in characteristic zero and part $2$ of the previous proposition. $\square$

\end{subsection}

\begin{subsection}{An application to $FI$-modules}
We'd now like to connect the theory developed here to the theory of finitely generated $FI$-modules, so that by extension it is connected to the many applications of that theory outside of representation theory.    

As usual for an $FI$-module $V$ let $V_n$ denote the representation of $S_n$ corresponding to the image of a set of size $n$ under the functor $V$. We have the following proposition:

\begin{proposition}\label{fiprop}
If $V$ is a finitely generated $FI$-module, then there exists an $\ell$ such that for all $n,m\gg0$ with $p^\ell | (n-m)$  the image of $V_n$ under our equivalence of categories from Corollary \ref{main} is isomorphic to $V_m$.
\end{proposition}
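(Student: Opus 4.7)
The plan is to exhibit $V_n$, for $n$ large, as a cokernel inside $\text{Rep}(S_n)^{\le r}$ of a presentation map that lies in the stable hom-space of Section~\ref{rev}, and then transport this cokernel across the equivalence of abelian categories from Corollary~\ref{main}.

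First I would invoke the noetherianity of the category of finitely generated $FI$-modules over a field (\cite{CEFN}) to produce a finite presentation $\bigoplus_i M(a_i) \xrightarrow{\psi} \bigoplus_j M(b_j) \to V \to 0$ by free $FI$-modules in bounded degrees, with all $a_i$ and $b_j$ at most some $r$ depending only on $V$. Evaluating at $[n]$, the free module $M(a)$ becomes $k[\text{Inj}([a], [n])]$, which as an $S_n$-permutation module is $M((1^a)(n))$. So both terms of the presentation lie in $\text{Perm}^{\le r}_n$ for all $n > 2r$, and $V_n$ is the cokernel of $\psi_n$ computed inside $\text{Rep}(S_n)^{\le r}$.

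Next I would identify $\psi$ with an element of a stable hom-space. By the Yoneda lemma, $\text{Hom}_{FI}(M(a), M(b)) \cong M(b)_a$ is a finite-dimensional vector space spanned by the injections $\iota : [b] \hookrightarrow [a]$, and each $\iota$ induces the post-composition map $\iota_* : M(a)_n \to M(b)_n$ simultaneously for every $n$. Under the identification $M(a)_n \cong M((1^a)(n))$, the injection $\iota$ sends a set-partition $(A_1, A_2, \ldots)$ to the set-partition where the singleton blocks not in the image of $\iota$ get absorbed into the first row; this is a Carter-Lusztig basis map whose defining tabloid, restricted to the rows after the first, is visibly independent of $n$. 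Hence $\psi$ defines a well-defined element of the stable hom-space $\overline{\text{Hom}}$ that recovers $\psi_n$ for every sufficiently large $n$.

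Finally I would take $\ell = \lceil \log_p r \rceil$ and apply Corollary~\ref{main}: for $n, m > 2r$ with $p^\ell \mid (n-m)$, the equivalence $\text{Rep}(S_n)^{\le r} \cong \text{Rep}(S_m)^{\le r}$ carries stable maps to stable maps by its very construction, so it sends $\psi_n$ to $\psi_m$; and since it is an equivalence of abelian categories it preserves cokernels, so $V_n$ is sent to $V_m$. The main obstacle in the argument is the combinatorial identification of the $FI$ post-composition map $\iota_*$ with a stable Carter-Lusztig tabloid map, which underwrites the claim that the presentation of $V$ really is stable; once that bookkeeping is checked, the rest follows formally from finite presentation together with the previously established equivalences.
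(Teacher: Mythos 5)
Your proposal is correct and follows essentially the same route as the paper: identify free $FI$-modules with the permutation modules $M((1^d)(n))$, identify $FI$-maps between them with stable Carter--Lusztig maps, present a finitely generated $FI$-module as a cokernel of such a map (via \cite{CEFN}), and transport the cokernel across the exact equivalence of Corollary \ref{main}. You merely spell out the combinatorial identification of the maps and the noetherianity/finite-presentation step in more detail than the paper does (and note that the map induced by an injection $\iota$ is precomposition with $\iota$, a harmless slip of terminology).
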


\noindent \textbf{Proof:}  This is clearly true for free $FI$-modules, which correspond in our notation to the sequence of representations of the form $V_n = M(\lambda(n))$ for $\lambda = (1^d)$, and direct sums thereof.  Moreover $FI$-maps between free $FI$-modules are easy to describe (see definition 2.2 in \cite{CEFN}) and can be identified with a subspace of one of our stable hom-spaces (one spanned by certain stable Carter-Lusztig maps, in fact).  To get an arbitrary finitely generated $FI$-module we just note that every $FI$-module is a cokernel of a map between free $FI$-modules by \cite{CEFN} proposition 2.3.  $\square$

\medskip

In particular, if $V$ is a finitely generated $FI$-module this gives us the eventual periodicity (with period a power of p) of a number of representation theoretic questions including:

\begin{itemize}
\item The decomposition of $V_n$ into indecomposable summands.

\item The length of the socle filtration of $V_n$, and the irreducible summands of its quotients. In particular this gives us the composition length of $V_n$ as well as the composition multiplicities of the irreducibles $D^{\lambda(n)}$.

\item The dimensions of the cohomology groups $H^i(S_n,V_n)$, recovering a result of Nagpal \cite{Nagpal}.
\end{itemize}

This gives strengthenings of the topological and algebraic applications of $FI$-modules from \cite{CEFN}. In particular the above results hold for sequences $V_n$ of representations such as homology spaces $\mathcal{H}_m(\Gamma_n(\mathfrak{p}),k)$ of congruence subgroups, cohomology spaces $H^m(Conf_n(M), k)$ of configuration spaces, and graded parts $R^{(r)}_J(n)$ of diagonal coinvariant algebras. 

\medskip
\noindent \textbf{Remark:}  In principle one could go through everything more carefully in order to bound the eventual periodicity as well as how fast it reaches the stable period. These should probably just be relatively simple formulas in terms of the degree in which the $FI$-module is generated, and the support of the $FI$-module homology. Since $FI$-modules aren't the focus of our paper we won't pursue this direction any further.

\end{subsection}

\begin{subsection}{Deligne's ultrafilter construction}\label{Deligne}

A \emph{symmetric tensor category} over a field $k$ is a $k$-linear abelian category $\mathcal{C}$ endowed with a biexact $k$-linear monoidal functor $\otimes: \mathcal{C}\times\mathcal{C} \rightarrow \mathcal{C}$ with associativity and commutativity isomorphisms and a unit object $\textbf{1}$ satisfying certain axioms (see \cite{Ostrik}).  A symmetric tensor category $\mathcal{C}$ is called a \emph{pre-Tannakian category} if it additionally satisfies:

\begin{enumerate}
\item $\mathcal{C}$ is essentially small, has finite dimensional hom-spaces, and each object has finite composition length.

\item The natural morphism $k \rightarrow \text{End}(\textbf{1})$ is an isomorphism.

\item Every object $X$ in $\mathcal{C}$ admits a dual $X^*$. In other words, $\mathcal{C}$ is rigid (see \cite{EGNO} chapter $2.10$).

\end{enumerate}

A symmetric tensor functor from a pre-Tannakian category to the category of (super) vector spaces is called a \emph{(super) fiber functor}. A pre-Tannakian category equipped with (super) fiber functor is called a \emph{(super) Tannakian category}.  Tannakian formalism tells us that any (super) Tannakian category is equivalent to a category $Rep(G,\varepsilon)$ of representations of an algebraic (super) group $G$ for which a certain central element $\varepsilon$  satisfying $\varepsilon^2 =1$ acts by the parity endomorphism, endowed with the forgetful functor to (super) vector spaces. (See \cite{Deligne2} for the classical case, and \cite{Deligne3} for the super case.)

A pre-Tannakian category $\mathcal{C}$ is said to be of \emph{subexponential growth} if for any object $X$ the length of $X^{\otimes n}$ is bounded by a function $C_X^n$ for some real number $C_X$ (an object failing this condition is said to be of \emph{superexponential growth}). Being of subexponential growth is a necessary condition for the existence of a super fiber functor.  In characteristic zero Deligne showed (in \cite{Deligne2}) that this is also a sufficient condition. Moreover there exist examples of pre-Tannakian categories in characteristic zero which are not of subexponential growth, and hence do not admit a super fiber functor.

In positive characteristic this condition is known not to be sufficient, there are examples of pre-Tannakian categories of subexponential growth which do not admit a super fiber functor.  Recently Ostrik proposed a relaxation of the notion of super fiber functor in positive characteristic, and conjectured that subexponential growth is a sufficient condition to imply the existence of such a functor (see \cite{Ostrik}).  He also remarked that there are no known pre-Tannakian categories in positive characteristic which are not of subexponential growth. Deligne responded to this remark with a letter containing the following theorem:

\begin{theorem}\textbf{(Deligne 2015)} There exist pre-Tannakian categories in arbitrary characteristic with objects of super-exponential growth.

\end{theorem}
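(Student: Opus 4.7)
The plan, following Deligne's ultrafilter approach mentioned in the introduction, is to construct the required category as an ultraproduct of the pre-Tannakian categories $\text{Rep}_{\mathbb{F}_p}(S_n)$ along a non-principal ultrafilter $\mathcal{U}$ on $\mathbb{N}$, and then to exhibit an explicit object whose tensor powers grow superexponentially in length. The naive full ultraproduct is too large — Hom-spaces are infinite-dimensional and objects have infinite length — so the construction must be restricted to sequences $(V_n)$ for which there exists an $r$ with $V_n \in \text{Rep}(S_n)^{\le r}$ along $\mathcal{U}$ (the subcategory of Corollary \ref{main}), with the bound $r$ allowed to vary from object to object.

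Concretely, I would define $\mathcal{C}$ to have these objects and set $\text{Hom}_{\mathcal{C}}((V_n),(W_n)) := \prod_\mathcal{U} \text{Hom}_{S_n}(V_n, W_n)$, with tensor product, duals, unit, and symmetric braiding all induced componentwise from $\text{Rep}_{\mathbb{F}_p}(S_n)$. The pre-Tannakian axioms should then fall out of the paper's stability results: Corollary \ref{main} (together with Proposition \ref{extrastruct} for the monoidal structure) implies that the lengths of $V_n$ and the $\mathbb{F}_p$-dimensions of $\text{Hom}_{S_n}(V_n,W_n)$ are eventually periodic in $n$ with period a power of $p$, and therefore stabilize along $\mathcal{U}$ to finite values. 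This yields finite composition length for objects and finite-dimensional Hom-spaces over $\mathbb{F}_p$; rigidity is componentwise; and $\text{End}_{\mathcal{C}}(\mathbf{1}) = \mathbb{F}_p$ since each $\text{End}_{S_n}(\mathbf{1}) = \mathbb{F}_p$.

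For the superexponential growth object, I would take $V := [(M((n-1,1)))]$, the ultralimit of the standard permutation representations of $S_n$. For $n \ge k$ the $S_n$-orbits of $[n]^k$ — equivalently, the permutation-module summands of $V_n^{\otimes k}$ — are indexed by set partitions of $[k]$, giving $B(k)$ orbits, where $B(k)$ is the $k$-th Bell number. Each orbit contributes at least one composition factor to $V_n^{\otimes k}$, so the length is bounded below by $B(k)$ along the ultrafilter, and hence $V^{\otimes k}$ has length at least $B(k)$ in $\mathcal{C}$. Since $B(k)^{1/k} \to \infty$, the object $V$ has superexponential growth as required.

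The principal obstacle is verifying that $\mathcal{C}$ is actually pre-Tannakian rather than merely symmetric monoidal — establishing finite composition length of objects and finite-dimensional Hom-spaces — which is precisely the role of the periodicity theorems of Section 2. Without them one would have to re-prove from scratch, uniformly in $n$, that the relevant numerical data stabilize; with them, stability along $\mathcal{U}$ is essentially automatic once the subcategories $\text{Rep}(S_n)^{\le r}$ are properly packaged. The further assertion that up to equivalence $\mathcal{C}$ depends only on the $p$-adic integer $t = \lim_{\mathcal{U}} n \in \mathbb{Z}_p$, conjectured by Deligne and discussed in section \ref{Deligne}, would then follow from applying the explicit equivalences of Theorem \ref{symequiv} to any two ultrafilters producing the same $t$.
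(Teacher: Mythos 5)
There is a genuine gap in the construction of the category itself. You define $\mathcal{C}$ to consist of \emph{all} sequences $(V_n)$ with $V_n \in Rep(S_n)^{\le r}$ along $\mathcal{U}$ for some $r$, and then claim that Corollary \ref{main} forces the lengths of $V_n$ and the dimensions of $\text{Hom}_{S_n}(V_n,W_n)$ to be eventually periodic in $n$. That is false for arbitrary sequences: Corollary \ref{main} is an equivalence between the categories $Rep(S_n)^{\le r}$ and $Rep(S_m)^{\le r}$, and it says nothing about a sequence of objects chosen independently for each $n$ unless the sequence is coherent with respect to those equivalences (that coherence is exactly the content of statements like Proposition \ref{fiprop} and Theorem \ref{Delconj}, and is an extra hypothesis, not automatic). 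Concretely, take $V_n = M(\lambda(n))^{\oplus n}$ with $\lambda=(1)$: each $V_n$ lies in $Perm^{\le 1}_n \subset Rep(S_n)^{\le 1}$, yet its length and its endomorphism algebra grow without bound, so the corresponding object of your ultraproduct has infinite length and infinite-dimensional endomorphisms. Hence $\mathcal{C}$ as you defined it is not pre-Tannakian, which is precisely the failure the construction must avoid.

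The repair is the step you skipped: inside the full ultraproduct one must pass to the abelian tensor subcategory generated by the single object $X=\{X_n\}$, $X_n$ the standard permutation representation. Every object there is ($\mathcal{U}$-locally) a subquotient of a fixed finite direct sum of tensor powers of $X_n$, so finiteness of length and of Hom-spaces follows from a bound that is uniform in $n$; the paper quotes James's theorem that the length of $X_n^{\otimes m}$ is bounded by a function of $m$ alone (and, e.g., $\dim\text{Hom}(X_n^{\otimes a},X_n^{\otimes b})$ is bounded by the number of orbits on $[n]^{a+b}$). In particular, the paper's proof of this theorem is Deligne's argument and does not use Corollary \ref{main} or any of the Section 2 machinery at all; those results enter only afterwards, to prove the dependence of $Rep(S_\mathcal{U})_0$ only on $t(\mathcal{U})$ (Theorem \ref{Delconj}), which is a different and harder statement than the one at hand. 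Your superexponential-growth argument, on the other hand, is correct and even a bit more self-contained than the paper's: bounding the length of $X^{\otimes k}$ below by the Bell number $B(k)$ via the orbit decomposition of $[n]^k$ for $n\ge k$ works, whereas the paper instead compares with the known superexponential length in characteristic zero.
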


\noindent \textbf{Proof (summarizing Deligne):}  Fix a finite field $k$ and a nontrivial ultrafilter $\mathcal{U}$ on the set of natural numbers and let $Rep(S_\mathcal{U})$ denote the ultraproduct of the categories $Rep_k(S_n)$ for $n$ running over the natural numbers. More explicitly, consider the category whose objects are sequences $\{X_n\}$ where $X_n$ is a representation of $S_n$ over $k$ for each $n \in \mathbb{N}$, and morphisms from $\{X_n\}$ to $\{Y_n\}$ are given by sequences of morphisms $\{\phi_n: X_n \rightarrow Y_n\}$. $Rep(S_\mathcal{U})$ is obtained from this category by quotienting by the relation that two sequences (of objects or morphisms) are the equivalent if they agree on a subset of $\mathbb{N}$ belonging to $\mathcal{U}$.

We may equip $Rep(S_\mathcal{U})$ with a tensor structure given by $\{X_n\} \otimes \{Y_n\} = \{X_n\otimes Y_n\}$.  This makes it into a symmetric tensor category with duals ($\{X_n\}^* = \{X_n^*\}$) and a unit object $\textbf{1} = \{\textbf{1}_n\}$ satisfying $\text{End}(\textbf{1}) \cong k$ (for this it is important that we took $k$ to be a finite field).  However it is not pre-Tannakian since it has infinite dimensional hom-spaces and objects of infinite length. For example we could take the object corresponding to the sequence of representations where $X_n$ is an $n$-dimensional vector space with trivial action of $S_n$, this has infinite length and an infinite dimensional endomorphism algebra.

We can trim this down to a more manageable category $Rep(S_\mathcal{U})_0$ by considering the abelian tensor subcategory generated by the object $X = \{X_n\}$ where $X_n$ denotes the standard $n$ dimensional representation of $S_n$.  James has shown previously that the length of $X_n^{\otimes m}$ is bounded by a function $f(m)$ not depending on $n$ (or the characteristic), in particular this implies that objects in $Rep(S_\mathcal{U})_0$  all have finite length, and similarly one can get that all hom-spaces are finite dimensional.  In particular we get that $Rep(S_\mathcal{U})_0$ is pre-Tannakian.

The length of $X^{\otimes m}$ in $Rep(S_\mathcal{U})_0$ is at least as big as that of the corresponding object in characteristic zero which we know grows superexponentially in $m$, so therefore $X$ is an object of superexponential growth, as desired. $\square$

\medskip

The main results of this paper give us more refined control than James's result on the length of $X_n^{\otimes m}$, so now we will see what we get when we use them in its place in the above analysis. The $p$-adic integers are compact, so every sequence of integers has a unique ultralimit with respect to the ultrafilter $\mathcal{U}$ inside $\mathbb{Z}_p$.  In particular the sequence $a_n = n$ has a limit value $t = t(\mathcal{U}) \in \mathbb{Z}_p$.  We have the following theorem conjectured by Deligne:

\begin{theorem}\label{Delconj}
For two nontrivial ultrafilters  $ \ \mathcal{U}$ and $ \ \mathcal{U}'$, the categories $Rep(S_\mathcal{U})_0$ and $Rep(S_\mathcal{U'})_0$ are equivalent as pre-Tannakian categories iff $t(\mathcal{U}) = t(\mathcal{U}') $.
\end{theorem}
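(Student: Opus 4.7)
The plan is to prove both implications of this biconditional using the stability and periodicity results developed earlier, together with the $p$-adic input from Proposition \ref{specr}. For the implication $t(\mathcal{U}) = t(\mathcal{U}') \Rightarrow$ equivalence, I will glue the equivalences of Corollary \ref{main} across a filtration of the ultraproduct category; for the converse, I will extract $t(\mathcal{U})$ from categorical invariants of the pre-Tannakian structure.

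For the sufficient direction, I would filter $Rep(S_\mathcal{U})_0$ by the full subcategories $Rep(S_\mathcal{U})_0^{\le r}$ obtained as ultraproducts of $Rep(S_n)^{\le r}$. Since the generator $X$ lies in level $1$ and its $m$-th tensor power lies in level bounded by James's length bound for $X^{\otimes m}$, these subcategories exhaust $Rep(S_\mathcal{U})_0$. The assumption $t(\mathcal{U}) = t(\mathcal{U}')$ means that for each $m$, the set $\{n : n \equiv t \pmod{p^m}\}$ belongs to both $\mathcal{U}$ and $\mathcal{U}'$. Taking $m = \lceil \log_p r \rceil$ and intersecting with $\{n > 2r\}$, I get sets $B \in \mathcal{U}$, $B' \in \mathcal{U}'$ on which any pair $(n, n')$ satisfies the hypotheses of Corollary \ref{main}; choosing a bijection $B \to B'$ and applying that corollary pointwise produces an equivalence $Rep(S_\mathcal{U})_0^{\le r} \cong Rep(S_{\mathcal{U}'})_0^{\le r}$. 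These equivalences are coherent across $r \le r'$ by naturality in the Young-module tilting construction underlying Corollary \ref{ringelcor}, and they respect tensor products and duals by Proposition \ref{extrastruct}. Passing to the colimit in $r$ yields the desired equivalence of pre-Tannakian categories.

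For the converse, I would recover $t(\mathcal{U})$ from the categorical dimension function. Any pre-Tannakian equivalence preserves the categorical dimension $\dim \colon K_0(Rep(S_\mathcal{U})_0) \to k = \mathbb{F}_p$ as well as the integer $k$-dimensions of hom-spaces. The ultralimit $X = \{X_n\}$ has $\dim X = t(\mathcal{U}) \bmod p$, and more generally the Young-module ultralimit $Y(\lambda) = \{Y(\lambda(n))\}$ has $\dim Y(\lambda) = h_\lambda(t(\mathcal{U})) \bmod p$ for an explicit integer-valued polynomial $h_\lambda \in \mathcal{R}$ coming from stability of $\dim Y(\lambda(n))$ in $n$. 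As $\lambda$ ranges over all partitions, the values $h_\lambda(t(\mathcal{U})) \bmod p$ determine a ring homomorphism $\mathcal{R} \to \mathbb{F}_p$, which by Proposition \ref{specr} coincides with $ev_{t(\mathcal{U})}$ and therefore pins down $t(\mathcal{U}) \in \mathbb{Z}_p$. Running the same argument on the $\mathcal{U}'$ side forces $t(\mathcal{U}) = t(\mathcal{U}')$.

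The hard part, by a substantial margin, is the converse direction: an abstract equivalence $F$ need not send $X$ to $X'$ or $Y(\lambda)$ to $Y(\lambda)$, so the sketch above must be recast in terms of invariants that make no reference to a preferred generator. The cleanest route is to show that the homomorphism $\mathcal{R} \otimes \mathbb{F}_p \to \mathbb{F}_p$ induced by the categorical dimension on $K_0 \otimes \mathbb{F}_p$ is canonically attached to the pre-Tannakian category, ideally by characterizing the filtration $Rep(S_\mathcal{U})_0^{\le r}$ intrinsically (for example as the smallest abelian tensor subcategory containing objects whose tensor powers have length growing no faster than a prescribed rate). Once such an intrinsic characterization is in place, Proposition \ref{specr} does the remaining work and forces equality of the $t$-values.
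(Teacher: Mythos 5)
Your forward direction (equal $t$-values imply equivalence) is essentially the paper's argument: the paper reduces the claim to the statement that the abelian tensor subcategory of $Rep(S_n)$ generated by the tensor powers of $X_n$, together with its tensor structure, depends for $n\gg 0$ only on $n$ modulo a suitable power of $p$, and then invokes Corollary \ref{main} and Proposition \ref{extrastruct}, noting $X_n = M(\lambda(n))$ for $\lambda=(1)$. Your bookkeeping with sets belonging to the two ultrafilters, the exhaustion of $Rep(S_\mathcal{U})_0$ by the levels $\le r$, and the compatibility of the level-$r$ equivalences is the same mechanism spelled out in somewhat more detail, and is fine at roughly the paper's level of rigor.

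The gap is in the converse, which you declare to be the hard part and then do not prove: you observe (correctly) that an abstract tensor equivalence need not match the chosen generators, and your proposed remedy --- an intrinsic characterization of the filtration making the ``dimension homomorphism'' $\mathcal{R}\otimes\mathbb{F}_p\to\mathbb{F}_p$ canonical --- is left entirely unexecuted, so that half of the biconditional remains a plan rather than an argument. The specific route you sketch before that also has an unjustified step: the claim that $\dim Y(\lambda(n))$ is given by an integer-valued polynomial $h_\lambda$ evaluated at $t$ is not true as stated (in characteristic $p$ these dimensions are not polynomial in $n$; by the results of this paper they are at best eventually periodic mod $p$ with $p$-power period), and no reason is given why the resulting values should assemble into a ring homomorphism out of $\mathcal{R}$. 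The paper's converse is instead a one-liner that avoids the detour through Young modules: the dimensions of the exterior powers of the generator are $\binom{t}{m} \bmod p$, and by Lucas' theorem (Proposition \ref{lucas}) $\dim \Lambda^{p^i}X$ is exactly the $i$-th base-$p$ digit of $t$, so the digits of $t$ are read off directly. Your underlying worry --- that an equivalence might send $X$ elsewhere --- is a real subtlety which the paper's one-line argument also elides (it implicitly treats the generator as part of the data being preserved), but flagging the issue without resolving it does not close the converse, so as written your proposal does not establish the stated theorem.
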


\noindent \textbf{Proof:}  The `only if' direction is easy, one can read off the base $p$ digits of $t$ from the dimensions of exterior powers of $X$.  The other direction, Deligne noted, is equivalent to the abelian subcategory of $Rep(S_n)$ generated by $X_n^{\otimes{m}}$ (along with the data of the functor $\otimes$ restricted to these subcategories)  being independent of $n$ for $n\gg 0$ in a fixed residue class modulo $p^{g(m)}$ for some function $g$.  This follows from corollary \ref{main} (and proposition \ref{extrastruct}) noting that $X_n$ is $ M(n-1,1) =M(\lambda(n))$ for $\lambda=(1)$ in the notation of this paper. $\square$

\medskip

\noindent \textbf{Remark:} For a choice of $p$-adic integer $t$ we could construct these categories without the use of an ultrafilter by taking an appropriate direct limit of the categories $Rep(S_n)^{\le r}$, where we let $n$ approach $t$ in the $p$-adic topology sufficiently fast as $r$ tends to infinity gluing via our equivalences of categories.  This is similar in flavor to the construction of abelian envelopes $\mathcal{V}_t$ of the Deligne category $\underline{Rep}(GL_t)$ at integer $t$ appearing in an upcoming paper by Entova-Aizenbud, Hinich, and Serganova \cite{EHS} by taking an appropriate limit of subcategories of representations of supergroups.

\end{subsection}

\begin{subsection}{Modular representation theory in $p$-adic rank}\label{prank}

Deligne's category $\underline{\text{Rep}}(S_t)$ and its relatives are of interest to those studying tensor categories as they provide examples of pre-tannakian categories in characteristic zero which contain an object of super-exponential growth and hence do not admit a fiber functor (see \cite{EGNO} chapter 9.12).  The fact that they interpolate categories coming from representation theory allows one to do what Pavel Etingof calls `representation theory in complex rank' \cite{Etingof1}.

Unfortunately Deligne's construction of $\underline{\text{Rep}}(S_t)$ does not work very well in positive characteristic, it gives Karoubian categories which satisfy a certain universal property but which are very coarse and do not see much of the representation theory of the groups they are supposed to interpolate.  

So we'd like to introduce a program we are calling `Modular representation theory in $p$-adic rank'  as a modular version of this theory which is closer in behavior to the characteristic zero theory than the naive extension of Deligne's construction.  Here is an outline of the first construction in this direction based on the results in this paper:

\medskip

First we will define a monoidal category $\text{Rep}_\mathcal{R}(S_x)$ defined over the ring $\mathcal{R}$ of integer valued polynomials in a formal variable $x$ from section \ref{polysection}. It is defined as follows:

\begin{itemize}
\item Objects are symbols $[M(\lambda)]$ for all partitions $\lambda$, and formal direct sums thereof.

\item Morphisms are defined by $Hom([M(\lambda)], [M(\mu)]) := \overline{ \text{Hom}}(M(\mu), M(\lambda))\otimes \mathcal{R}$. In other words, it is the free $\mathcal{R}$ module spanned by stable tabloids.

\item Composition is given by the formula from Prososition \ref{polycomp}, with $p^\tau(n)$ getting replaced by $p^\tau(x) \in \mathcal{R}$.

\item The tensor structure is given by the stable tensor structure for permutation modules (see Prososition \ref{indep}).
\end{itemize}

Next, let $t$ be a $p$-adic integer and $k$ be a field of characteristic $p$. Define $\underline{\text{Rep}}_{k}(S_t)$  to be the Karoubian envelope of the reduction $\underline{\text{Rep}}^0_{k}(S_t)$ of $\text{Rep}_\mathcal{R}(S)$ under the map $ev_t: \mathcal{R} \rightarrow k$ from Prososition \ref{specr}.

\medskip

\noindent \textbf{Remark:} Maps from $\mathcal{R}$ into a field $k$ of characteristic zero are completely determined by the value $t \in k$ that the polynomial $x$ gets sent to.  One can check that in this setting the $k$-linear category we get by reducing $\text{Rep}_\mathcal{R}(S)$ under one of these maps and taking a Karoubian envelope is equivalent to Deligne's category $\underline{\text{Rep}}(S_t)$ provided $t\ne 0$.

\medskip

Let $\underline{\text{Rep}}_{k}(S_t)^{\le{r}}$ be the Karoubian subcategory generated by the images of $[M(\lambda)]$, this makes  $\underline{\text{Rep}}_{k}(S_t)$ into a filtered Karoubian tensor category. The real point though is that we set things up so that  $\underline{\text{Rep}}_{k}(S_t)^{\le{r}} \cong Young_n^{\le r}$ provided $n$ is sufficiently large and $(t-n)$ is sufficiently divisible by $p$.

We can embed these into abelian tensor categories using Delignes ultrafilter construction, or by gluing the filtered pieces appropriately.

\medskip

This program is still being developed, but there is a lot of partial progress. Here are a few of the initial goals of this program:

\begin{itemize}
\item Give more explicit presentations of the categories $\text{Rep}_\mathcal{R}(S_x)$ and $\underline{\text{Rep}}^0_{k}(S_t)$ in terms of a subset objects which generate them as tensor categories.

\item Define a more refined notion of categorical dimension which is better suited to positive characteristic. This is addressed in a subsequent paper \cite{EHO} with Etingof and Ostrik.

\item Find universal properties satisfied by the categories $\text{Rep}_\mathcal{R}(S_x)$ and $\underline{\text{Rep}}_{k}(S_t)$ analogous to the one for Deligne's categories.

\item Look into a classification of tensor functors from $\underline{\text{Rep}}_{k}(S_t)$ into pre-tannakian categories similar to the one given by Comes and Ostrik for Deligne categories at integer $t$ in \cite{CO2}.

\item Construct a version of this theory for modular representations of general linear groups, and other well behaved families of groups.

\item Understand a version of Schur-Weyl duality for these categories analogous to the results in \cite{Inna}.

\end{itemize}

\end{subsection}

\begin{subsection}{Miscellaneous open directions}
Finally we will close by mentioning a few questions we think are interesting related to this work:

\medskip



\medskip
\noindent \textbf{Putman's central stability:}  Putman defined a notion of central stability for a sequence of representations of symmetric groups, which is more relaxed than being an $FI$-module but still gives a number of stabilization results (see \cite{Putman}).  We suspect that a version of Proposition \ref{fiprop} holds for these sequences as well.

\medskip 

\noindent \textbf{Other groups:} Deligne's ultrafilter construction works with minimal changes to the sequences of groups $GL(n)$, $O(n)$, and $Sp(2n)$ along with their defining representations.  In his letter to Ostrik, Deligne suggests that a version of theorem \ref{Delconj} should hold for these groups as well. This would amount to proving periodicity results similar to those in this paper for these families of groups.

\medskip

\noindent \textbf{Degree functions on regular categories:}  In \cite{Knop} Knop gave a very general recipe for constructing interpolation categories from a regular category with a `degree' map, when applied to the opposite category of finite sets with an appropriate choice of degree map it recovers Deligne's category $\text{Rep}(S_t)$.  Is there a modified notion of degree map which is better suited to positive characteristic so that his construction can recover the categories $\underline{\text{Rep}}_{k}(S_t)^{\le{r}}$ from section \ref{prank}?

\medskip

\noindent \textbf{`Type 3' results:}  Our equivalences of categories let us equate a number of numerical invariants for representations of different symmetric groups (i.e. Prososition \ref{numeric}).  However it doesn't seem to help us actually calculate any of these values, many of which are notoriously difficult to understand.  The hope is that these stable (or stably periodic) values may be easier to compute than the usual versions.

\end{subsection}

\end{section}

\end{document}